\theoremstyle{definition}
\newtheorem{definition}{Definition}[section]
\theoremstyle{plain}
\newtheorem{lemma}[definition]{Lemma}
\newtheorem{theorem}[definition]{Theorem}
\newtheorem{proposition}[definition]{Proposition}
\newtheorem{corollary}[definition]{Corollary}
\theoremstyle{remark}
\newtheorem{remark}[definition]{Remark}
\newtheorem{example}[definition]{Example}
\newcommand{\myint}{\operatorname{int}}
\newcommand{\mycl}{\operatorname{cl}}
\newcommand{\mysucc}{\operatorname{succ}}
\newcommand{\mynlb}{\operatorname{NLB}}
\newcommand{\myfib}{\operatorname{fib}}
\begin{document}
\title[Notes on definably complete local o-minimality]{Notes on definably complete locally o-minimal expansions of ordered groups}
\author[M. Fujita]{Masato Fujita}
\address{Department of Liberal Arts,
Japan Coast Guard Academy,
5-1 Wakaba-cho, Kure, Hiroshima 737-8512, Japan}
\email{fujita.masato.p34@kyoto-u.jp}

\begin{abstract}
We study definably complete locally o-minimal expansions of ordered groups in this paper.
A definable continuous function defined on a closed, bounded and definable set behave like a continuous function on a compact set.
We demonstrate uniform continuity of a definable continuous function on a closed, bounded and definable set and Arzela-Ascoli-type theorem.
We propose a notion of special submanifolds with tubular neighborhoods and show that any definable set is decomposed into finitely many special submanifolds with tubular neighborhoods.
\end{abstract}

\subjclass[2020]{Primary 03C64}

\keywords{locally o-minimal structure; special submanifolds}

\maketitle

\section{Introduction}\label{sec:intro}
We study definably complete locally o-minimal expansions of ordered groups in this paper.
An o-minimal structure enjoys tame properties such as monotonicity and definable cell decomposition \cite{vdD,KPS,PS}. 
Toffalori and Vozoris studied locally o-minimal structures in \cite{TV}.
Roughly speaking, a locally o-minimal structure is defined by simply localizing the definition of an o-minimal structure.
However, their study reveals that the local version of monotonicity theorem is unavailable in a locally o-minimal structure.
Local o-minimal structures are studied also in \cite{KTTT}.

The above two papers do not assume definable completeness.
Fornaisero made a comprehensive study on definably complete locally o-minimal expansions of ordered fields in \cite{F}.
This study showed that definably complete locally o-minimal structures enjoy tame topological properties when they are expansions of ordered fields.
The author and his collaborators have demonstrated that definably complete locally o-minimal structures still enjoy tame topological properties without algebraic assumptions such as that they are expansions of ordered fields \cite{Fuji, Fuji2, Fuji4, FKK}. 
These are introduced in Section \ref{sec:preliminary}.
As a preliminary, we also demonstrate a definable choice lemma for a definably complete expansion of an ordered group and its immediate consequences in this section.

In this paper, as we said at the beginning of the paper, we employ a weak algebraic assumption; that is, we consider definably complete locally o-minimal expansions of ordered groups.
We first demonstrate that a definable continuous function defined on a closed, bounded and definable set behave like a continuous function on a compact set in Section \ref{sec:function}.
More precisely, definable version of uniform continuity of continuous functions on closed, bounded and definable sets and definable Arzela-Ascoli-type theorem are demonstrated.

The notion of quasi-special submanifolds is introduced in \cite{Fuji4}.
Any set definable in a definably complete locally o-minimal structure is decompoed into finitely many quasi-quadratic submanifolds \cite[Proposition 2.11]{FKK}.
This notion is obtained by relaxing the definitions of multi-cells in \cite{F} and special submanifolds in \cite{M2}, which define the same notion in our setting as we demonstrate later.
Miller, Thamrongthanyalak and Fornasiero demonstrated that a definable set is decomposed into finitely many special submanifolds/multi-cells under the assumption that the structure is an expansion of an ordered field \cite{F,M2,T}.
(Miller and Thamrongthanyalak assumed d-minimality instead of local o-minimality.)
In Section \ref{sec:decomposition}, we extend the definition of special submanifolds to the case in which the structures are expansions of dense linear orders without endpoints.
We prove that any set definable in a definably complete locally o-minimal expansion of an ordered group is decomposed into finitely many special submanifolds.
In addition, we introduce the notion of special submanifolds with tubular neighborhoods and demonstrate a decomposition theorem into them.

We introduce the terms and notations used in this paper.
The term `definable' means `definable in the given structure with parameters' in this paper.
For a linearly ordered structure $\mathcal M=(M,<,\ldots)$, an open interval is a definable set of the form $\{x \in R\;|\; a < x < b\}$ for some $a,b \in M \cup \{\pm \infty\}$.
It is denoted by $(a,b)$ in this paper.
Elements in $M^2$ are denoted by the same notation, but it will not confuse readers.
We define a closed interval similarly. 
It is denoted by $[a,b]$.
An open box in $M^n$ is the direct product of $n$ open intervals.
We set $$\mathcal B_m(x,\varepsilon)=\{y=(y_1,\ldots, y_m) \in M^m\;|\; |x_i-y_i|<\varepsilon \text{ for all }1 \leq i \leq m\}$$ for any $x=(x_1,\ldots, x_m) \in M^m$ and $\varepsilon>0$.
The notation $M_{>r}$ denotes the set $\{x \in M\;|\;x>r\}$ for any $r \in M$.
We set $|x|:=\max_{1 \leq i \leq n}|x_i|$ for any vector $x = (x_1, \ldots, x_n) \in M^n$ when the addition is definable in $\mathcal M$.
The function $|x-y|$ defines a distance in $M^n$ when $\mathcal M$ is an expansion of an ordered abelian group.
Let $\myint(A)$ and $\mycl(A)$ denote the interior and the closure of a subset $A$ of a topological space, respectively.

\section{Preliminary}\label{sec:preliminary}
\subsection{Results in previous studies}

We first recall basic definitions.
\begin{definition}\label{def:dctc}
An expansion of a  dense linear order without endpoints $\mathcal M=(M,<,\ldots)$ is \textit{definably complete} if every definable subset of $M$ has both a supremum and an infimum in $M \cup \{ \pm \infty\}$ \cite{M}.

An expansion of a  dense linear order without endpoints $\mathcal M=(M,<,\ldots)$ is \textit{locally o-minimal} if, for every definable subset $X$ of $M$ and for every point $a\in M$, there exists an open interval $I$ containing the point $a$ such that $X \cap I$ is  a finite union of points and open intervals \cite{TV}.

A definably complete locally o-minimal structure is \textit{a model of DCTC} if any definable discrete subset of $M$ is bounded.
\end{definition}
The definition given above is not the same as the original definition of a model of DCTC in \cite{S}.
But they are equivalent by \cite[Corollary 2.8]{S}.

We also recall the definition of local monotonicity.
\begin{definition}[Local monotonicity]
A function $f$ defined on an open interval $I$ is \textit{locally constant} if, for any $x \in I$, there exists an open interval $J$ such that $x \in J \subseteq I$ and the restriction $f|_J$ of $f$ to $J$ is constant.
A function $f$ defined on an open interval $I$ is \textit{locally strictly increasing} if, for any $x \in I$, there exists an open interval $J$ such that $x \in J \subseteq I$ and $f$ is strictly increasing on the interval $J$.
We define a \textit{locally strictly decreasing} function similarly. 
A \textit{locally strictly monotone} function is a locally strictly increasing function or a locally strictly decreasing function.
A \textit{locally monotone} function is locally strictly monotone or locally constant.
\end{definition}

The following monotonicity theorem holds true.

\begin{theorem}[Monotonicity theorem]\label{thm:mono}
Let $\mathcal M=(M,<,\ldots)$ be a definably complete locally o-minimal structure.
Let $I$ be an interval and $f:I \rightarrow M$ be a definable function.
There exists a mutually disjoint partition $I=X_d \cup X_c \cup X_+ \cup X_-$ of $I$ into definable sets satisfying the following conditions:
\begin{enumerate}
\item[(1)] the definable set $X_d$ is discrete and closed;
\item[(2)] the definable set $X_c$ is open and $f$ is locally constant on $X_c$;
\item[(3)] the definable set $X_+$ is open and $f$ is locally strictly increasing and continuous on $X_+$;
\item[(4)] the definable set $X_-$ is open and $f$ is locally strictly decreasing and continuous on $X_-$.
\end{enumerate}
\end{theorem}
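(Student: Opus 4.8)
The plan is to read off the three ``good'' pieces $X_c$, $X_+$, $X_-$ directly from the local behaviour of $f$, dump everything else into $X_d$, and then prove that the leftover set is discrete by a local o-minimality argument. First I would set
\[
X_c=\{x\in I : f|_J \text{ is constant for some open interval } J \text{ with } x\in J\subseteq I\},
\]
and let $X_+$ (respectively $X_-$) be the set of $x\in I$ for which there is an open interval $J$ with $x\in J\subseteq I$ on which $f$ is \emph{both} strictly increasing (respectively strictly decreasing) \emph{and} continuous. Each is definable; each is open, since an interval witnessing membership of a point witnesses membership of every point of that interval; and they are pairwise disjoint, because no two of the three behaviours can hold together on a common subinterval. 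Putting $X_d=I\setminus(X_c\cup X_+\cup X_-)$, the set $X_d$ is definable and closed (its complement is a union of open sets), and properties (2)--(4) hold by construction, noting that a locally constant function is automatically continuous. Thus the entire theorem reduces to one assertion: $X_d$ is discrete.

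Next I would reduce this to a statement about arbitrary definable functions on an interval. If $X_d$ were not discrete, some point of $X_d$ would fail to be isolated in $X_d$; applying local o-minimality to the definable set $X_d$ near that point shows $X_d$ meets a small interval in a finite union of points and open intervals, so $X_d$ must contain an open subinterval $(c,d)$. It therefore suffices to prove the following \emph{Good Subinterval Lemma}: for every open interval $J$ and every definable $f\colon J\to M$ there is an open subinterval $J'\subseteq J$ on which $f$ is constant, or strictly increasing and continuous, or strictly decreasing and continuous. Applied to $(c,d)$ this lemma yields $J'\subseteq X_c\cup X_+\cup X_-$, contradicting $J'\subseteq X_d$.

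To prove the lemma I would first classify one-sided germs. If $f$ is constant on some subinterval we are done, so assume otherwise. For fixed $x$ the three definable sets $\{t:f(t)>f(x)\}$, $\{t:f(t)=f(x)\}$, $\{t:f(t)<f(x)\}$ partition $J$, and by local o-minimality each meets a small interval about $x$ in a finite union of points and intervals; hence one of them contains an interval $(x,x+\delta)$, so $x$ is increasing, constant, or decreasing \emph{to the right}, and symmetrically to the left. A point that is constant to one side would force $f$ to be constant on a subinterval, which is excluded; so every point is strictly increasing or strictly decreasing to each side. These conditions are definable and cut $J$ into finitely many pieces near any point, so after shrinking $J$ I may assume that one right-type and one left-type holds throughout. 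The two mixed types make every point a strict local extremum, which is impossible on an interval, since two sufficiently close points would then violate the order relation between their values. In the remaining coherent cases I would upgrade the one-sided germ to genuine strict monotonicity on $J$ by a supremum argument: for instance, if $f$ increases to the right everywhere and $x_1<x_2$, then setting $b=\inf\{t\in(x_1,x_2]:f(t)\le f(x_1)\}$ and examining the left and right germs at $b$ together with definable completeness forces $b$ not to exist, so that $f(x_1)<f(x_2)$.

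Finally I would upgrade strict monotonicity to continuity on a subinterval; this, together with the germ-upgrade above, is where the real work lies. Suppose $f$ is strictly increasing on $J$ (the decreasing case is symmetric). By definable completeness the one-sided limits $f(a^-)=\sup_{t<a}f(t)$ and $f(a^+)=\inf_{t>a}f(t)$ exist, with $f(a^-)\le f(a)\le f(a^+)$, and $f$ is continuous at $a$ precisely when both are equalities. At a discontinuity the definable complement $M\setminus f(J)$ contains the nonempty open interval $(f(a^-),f(a))$ or $(f(a),f(a^+))$, and by monotonicity these gaps are pairwise disjoint. If the discontinuity set $D$ were somewhere dense it would, by local o-minimality, contain a subinterval $(p',q')$; after shrinking, every point there would be discontinuous on the same side, say $f(a)<f(a^+)$ for all $a\in(p',q')$. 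As $a$ decreases to $p'$ the left endpoints $f(a)$ decrease to $y^*:=f(p'^+)$, so for every $\eta>0$ infinitely many of the pairwise disjoint gaps $(f(a),f(a^+))$ begin inside $(y^*,y^*+\eta)$; hence $M\setminus f(J)$ is not a finite union of points and intervals near $y^*$, contradicting local o-minimality. Thus $D$ contains no subinterval, so $D$ is discrete, and any open subinterval of $J$ avoiding $D$ lies in $X_+$, completing the lemma and the theorem. The main obstacle is exactly this last contradiction: converting ``a dense set of jumps'' into a genuine failure of local o-minimality requires pinning the accumulation of range-gaps at a single definable point $y^*$ and applying local o-minimality to the definable set $M\setminus f(J)$ there, rather than to $f$ itself.
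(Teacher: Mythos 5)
The paper itself gives no in-text proof of this theorem (it cites \cite[Theorem 2.3]{FKK}), so there is nothing to compare line by line; judging your argument on its own merits, most of the scaffolding is fine. The reduction to a ``good subinterval'' lemma, the classification of one-sided germs via local o-minimality, the supremum argument upgrading a coherent pair of germs to strict monotonicity (taking $b=\inf\{t\in(x_1,x_2]:f(t)\le f(x_1)\}$ and contradicting either germ at $b$), and the continuity step that applies local o-minimality to the definable set $M\setminus f(J)$ at $y^*$ are all essentially correct, modulo the cosmetic point that the theorem assumes no group operation, so expressions like $x+\delta$ and $(y^*,y^*+\eta)$ must be rephrased order-theoretically. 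The genuine gap is the one clause disposing of the mixed types: ``every point a strict local extremum \ldots is impossible on an interval, since two sufficiently close points would then violate the order relation between their values.''

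That clause is not a proof, and the reason it fails is exactly the reason this theorem is hard. The punctured neighborhood witnessing that $x$ is a strict local maximum depends on $x$; to get your contradiction from $x_1\ne x_2$ you need \emph{each} point to lie inside the \emph{other's} witnessing neighborhood, and nothing guarantees any such pair exists. Strict local maxima of a function can be dense: for $f:\mathbb R\rightarrow\mathbb R$ with $f(p/q)=1/q$ in lowest terms and $f(x)=0$ at irrationals, every rational is a strict local maximum, so mere proximity of two strict local extrema is never by itself contradictory. A correct exclusion must combine definability of the extremum set with a uniformization step. If the witnessing radii admitted a positive lower bound on a subinterval, your two-point argument would close; that uniformity is automatic under the stronger hypothesis of uniform local o-minimality of the second kind (the setting of \cite{Fuji}), but in the present generality it has to be produced, e.g.\ by the definably Baire property (Proposition \ref{prop:baire}): the sets of points whose witnessing radius is at least $r$ form a definable increasing family covering the interval, so some member is somewhere dense, and two of its points at distance less than $r$ give the contradiction. (That formulation uses the group operation to speak of radii; a group-free variant runs through the definable partial map sending a pair $u<v$ to the unique $x\in(u,v)$ with $f(t)<f(x)$ for all $t\in(u,v)\setminus\{x\}$ --- in the mixed case this map is onto the interval while each fiber contains an open box of $M^2$, contradicting Proposition \ref{prop:dim}(9); but beware that in \cite{FKK} the dimension theory is developed on top of the monotonicity theorem, so invoking it would be circular in a from-scratch proof.) As written, the step you dismiss in one clause is the crux of the theorem, and it is precisely where definable completeness and local o-minimality must be used jointly rather than pointwise.
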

\begin{proof}
\cite[Theorem 2.3]{FKK}.
\end{proof}

We also need the following lemma in \cite{Fuji}:
\begin{lemma}\label{lem:dc_mono}
Let $\mathcal M=(M,<,\ldots)$ be a definably complete local o-minimal structure.
 A locally monotone definable function defined on an open interval is monotone.
\end{lemma}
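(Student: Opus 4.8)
The plan is to treat the three kinds of local monotonicity separately. Since the arguments for a locally strictly decreasing function and for a locally constant function are symmetric variants, I will concentrate on the case in which $f\colon I\to M$ is locally strictly increasing and prove that $f$ is strictly increasing on $I$. It suffices to fix arbitrary $a,b\in I$ with $a<b$ and show that the restriction $f|_{[a,b]}$ is strictly increasing; letting $a,b$ range over $I$ then yields global strict monotonicity. Note that continuity of $f$ is not needed: the only information used is the local strict increase supplied by the hypothesis.

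The main tool is definable completeness, used to run a supremum argument. Consider the definable set
\[
S=\{\,c\in(a,b] : f(x)<f(y)\text{ for all }x,y\text{ with }a\le x<y\le c\,\}.
\]
Because $f$ is locally strictly increasing, there is an open interval about $a$ on which $f$ is strictly increasing, so $S$ contains points just to the right of $a$ and is nonempty; it is bounded above by $b$. By definable completeness $c^*:=\sup S$ exists in $M$ with $a<c^*\le b$. I would then establish two claims: (i) $f|_{[a,c^*]}$ is strictly increasing, so that in fact $c^*\in S$; and (ii) $c^*=b$. Combining (i) and (ii) gives that $f|_{[a,b]}$ is strictly increasing, as desired.

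For (i), given $x<y$ in $[a,c^*]$ with $y<c^*$, I can pick $c\in S$ with $y<c$ by the definition of the supremum, so $f(x)<f(y)$ follows from $c\in S$; the boundary subcase $y=c^*$ is handled by combining this with local strict increase at $c^*$ through an intermediate point. For (ii), if $c^*<b$ I would use local strict increase at $c^*$ to obtain $c'$ with $c^*<c'\le b$ and an open interval about $c^*$ containing $[c^*,c']$ on which $f$ is strictly increasing, and then glue this with (i) to conclude that $f|_{[a,c']}$ is strictly increasing, i.e. $c'\in S$ with $c'>c^*$ — contradicting $c^*=\sup S$.

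I expect the only delicate point to be the gluing across the supremum $c^*$ in both claims: one must verify the mixed comparison $f(x)<f(y)$ when $x<c^*<y$, which is precisely where the supremum property (controlling behavior strictly below $c^*$) and the local strict increase at $c^*$ (controlling a two-sided neighborhood of $c^*$) are stitched together via an auxiliary point between $x$ and $c^*$ and between $c^*$ and $y$. The locally constant case runs identically, with ``$f(x)<f(y)$'' replaced by ``$f(x)=f(y)$'' throughout the definition of $S$, and the locally strictly decreasing case is the order-reversed mirror image.
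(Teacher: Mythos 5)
Your proposal is correct. Note that the paper itself does not spell out an argument for this lemma: its ``proof'' is a citation to \cite[Proposition 3.1]{Fuji}, so your supremum argument supplies in full what the paper delegates to a reference, and it is the natural argument for this kind of statement. The details all check out: the set $S$ is definable since $f$ is, it is nonempty (local strict increase at $a$) and bounded above by $b$, so $c^*=\sup S$ exists in $M$ by definable completeness; your claim (i) follows by approximating from below by elements of $S$, with the boundary case $y=c^*$ handled by a point $z$ with $\max(x,\inf J')<z<c^*$ (density of the order is used here), where $J'$ is a neighborhood of $c^*$ on which $f$ is strictly increasing; and in claim (ii) the mixed comparison $x<c^*<y$ needs only $c^*$ itself as the splicing point, since (i) gives $f(x)<f(c^*)$ and strict increase on $J'\supseteq[c^*,c']$ gives $f(c^*)<f(y)$. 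Two features of your proof are worth recording. First, as you observe, continuity plays no role, which is consistent with the paper's Theorem \ref{thm:mono} treating continuity as a separate conclusion. Second, and more interestingly, your argument never invokes local o-minimality at all: it uses only definable completeness and the density of the order (local monotonicity being a hypothesis, not something to be derived). So your proof establishes the statement for an arbitrary definably complete expansion of a dense linear order without endpoints, which is formally more general than the setting in which the lemma is stated.
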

\begin{proof}
\cite[Proposition 3.1]{Fuji}
\end{proof}

The following proposition guarantees the existence of the limit.

\begin{proposition}\label{prop:limit}
Let $\mathcal M=(M,<,0,+,\ldots)$ be a definably complete locally o-minimal expansion of an ordered group.
Let $s>0$ and $f:(0,s) \rightarrow M^n$ be a bounded definable map.
\begin{enumerate}
\item[(1)] There exists a positive $0<u<s$ such that the restriction of $f$ to $(0,u)$ is continuous. In addition, it is monotone when $n=1$;
\item[(2)] There exists a unique point $x \in M^n$ satisfying the following condition:
$$
\forall \varepsilon >0, \exists \delta>0, \forall t, \ 0<t<\delta \Rightarrow |x-f(t)| < \varepsilon \text{.}
$$ 
The notation $\lim_{t \to +0}f(t)$ denotes the point $x$.
\end{enumerate}
\end{proposition}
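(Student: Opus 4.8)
The plan is to prove part (1) first and then deduce part (2), reducing the vector-valued case to the scalar case in both parts.

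For part (1) with $n=1$, I would apply the Monotonicity Theorem (Theorem \ref{thm:mono}) to $f:(0,s)\to M$, obtaining a partition $(0,s)=X_d\cup X_c\cup X_+\cup X_-$. The key step is to show that some single piece among $X_c,X_+,X_-$ contains an interval of the form $(0,u)$. For this I would invoke local o-minimality at the point $0\in M$ itself (although $0\notin(0,s)$): there is an open interval $I\ni 0$ on which each of the four definable sets is a finite union of points and open intervals. Since $X_d$ is discrete it meets $I$ in finitely many points, and since $X_c,X_+,X_-$ are open their traces on $I$ are finite unions of open intervals. Collecting the finitely many interval endpoints and points of $X_d$ lying in $I\cap(0,s)$, I can choose $u>0$ below all of them; then $(0,u)$ contains no breakpoint and hence lies entirely in one piece. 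As $(0,u)$ is an interval it cannot lie in the discrete set $X_d$, so it lies in $X_c$, $X_+$, or $X_-$, on each of which $f$ is continuous and locally monotone. Lemma \ref{lem:dc_mono} then upgrades local monotonicity to genuine monotonicity on $(0,u)$. For general $n$ I would apply this componentwise and take the minimum of the resulting radii, using that a map into $M^n$ is continuous exactly when its components are; the monotonicity claim is only asserted for $n=1$.

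For part (2), uniqueness is immediate: if $x$ and $x'$ both satisfy the condition then the triangle inequality gives $|x-x'|<\varepsilon+\varepsilon$ for every $\varepsilon>0$, and since the group is densely ordered no positive element can satisfy this, forcing $x=x'$. For existence I would again reduce to $n=1$, because $|x-f(t)|=\max_i|x_i-f_i(t)|$, so the vector limit exists as soon as each coordinate limit does. In the scalar case, shrink to an interval $(0,u)$ on which $f$ is monotone by part (1). If $f$ is constant the limit is that constant value. If $f$ is strictly increasing, then as $t$ decreases $f(t)$ decreases, and since the image is bounded its infimum $x$ exists in $M$ by definable completeness; a short argument shows $\lim_{t\to+0}f(t)=x$: given $\varepsilon>0$, choose $t_0$ with $f(t_0)<x+\varepsilon$ (possible as $x$ is the greatest lower bound), and then $x\le f(t)<x+\varepsilon$ for all $0<t<t_0$. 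The strictly decreasing case is symmetric with the supremum.

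The main obstacle is the step in part (1) that confines $f$ to a single monotonicity regime on a one-sided neighborhood of $0$: a priori the pieces $X_c,X_+,X_-,X_d$ could interleave infinitely often as $t\to+0$, and ruling this out is exactly what the application of local o-minimality at the endpoint $0$ achieves. Once monotonicity near $0$ is secured, part (2) is a routine infimum/supremum argument powered by definable completeness and boundedness, with the group structure entering only to make sense of $|\cdot|$ and of $\varepsilon$-neighborhoods.
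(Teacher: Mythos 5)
Your proposal is correct and follows essentially the same route as the paper: apply the Monotonicity Theorem, use local o-minimality at the endpoint $0$ to confine $f$ to a single piece $X_c$, $X_+$ or $X_-$ on some $(0,u)$, upgrade to genuine monotonicity via Lemma \ref{lem:dc_mono}, reduce the vector case to $n=1$ componentwise, and obtain the limit as an infimum/supremum by definable completeness, with uniqueness from the triangle inequality. (Minor remark: your existence argument is actually slightly more careful than the paper's, which dismisses it as obvious and in fact writes $\inf$ where the strictly decreasing case calls for $\sup$.)
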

\begin{proof}
We first reduce to the case in which $n=1$.
Assume that the proposition holds true for $n=1$.
Let $\pi_i$ be the projection onto the $i$-th coordinate for all $1 \leq i \leq n$.
Apply the proposition to the composition $\pi_i \circ f$.
There exists $0<u_i<s$ such that the restriction of $\pi_i \circ f$ to $(0,u_i)$ is continuous and monotone for each $1 \leq i \leq n$.
Set $u=\min_{1 \leq i \leq n}u_i$.
The restriction of $f$ to $(0,u)$ is continuous.
Set $x_i=\lim_{t \to +0}\pi_i \circ f(t)$ for all $1 \leq i \leq n$.
It is obvious that $x=(x_1, \ldots, x_n)$ is the unique point satisfying the condition in the assertion (2).
We have succeeded in reducing to the case in which $n=1$.

Set $I=(0,s)$.
Applying Theorem \ref{thm:mono} to $f$, we get a partition $I=X_d \cup X_c \cup X_+ \cup X_-$ into definable sets such that $X_d$, $X_c$, $X_+$ and $X_-$ satisfy the conditions in Theorem \ref{thm:mono}.
Take a sufficiently small open interval $J$ containing the point $0$.
The intersections of $J$ with $X_d$, $X_c$, $X_+$ and $X_-$ are finite unions of points and open intervals.
Shrinking the interval $I$ if necessary, we have $I=X_c$, $I=X_+$ or $I=X_-$.
We have demonstrated the assertion (1) by Lemma \ref{lem:dc_mono}.
The remaining task is to show the assertion (2).
We only consider the case in which $I=X_-$.
We can prove the corollary similarly in the other cases.

The function $f$ is strictly decreasing because $I=X_-$.
Set $x=\inf_{0<t<s}f(t)$, which exists because $f$ is bounded.
It is obvious the point $x$ satisfies the required condition because $f$ is strictly decreasing.
Let $x'$ be another point satisfying the condition.
We fix an arbitrary $\varepsilon >0$.
There exists $\delta>0$ with $|x-f(t)|<\varepsilon$ whenever $0<t<\delta$.
There exists $\delta'>0$ with $|x'-f(t)|<\varepsilon$ whenever $0<t<\delta'$.
Set $\delta''=\min \{\delta,\delta'\}$.
We have $|x-x'| \leq |x-f(t)|+|x'-f(t)| < 2\varepsilon$ whenever $0<t<\delta''$.
We get $x=x'$ because $\varepsilon$ is an arbitrary positive element.
\end{proof}

\begin{definition}[Dimension]\label{def:dim}
Consider an expansion of a densely linearly order without endpoints $\mathcal M=(M,<,\ldots)$.
Let $X$ be a nonempty definable subset of $M^n$.
The dimension of $X$ is the maximal nonnegative integer $d$ such that $\pi(X)$ has a nonempty interior for some coordinate projection $\pi:M^n \rightarrow M^d$.
Here, we consider that $M^0$ is a singleton equipped with the trivial topology and the projection $\pi:M^n \rightarrow M^0$ is the trivial map.
We set $\dim(X)=-\infty$ when $X$ is an empty set.
\end{definition}

We only review the assertions on dimension which are frequently used in this study.

\begin{proposition}\label{prop:dim}
Consider a definably complete locally o-minimal structure $\mathcal M=(M,<,\ldots)$.
The following assertions hold true.
\begin{enumerate}
\item[(1)] A definable set is of dimension zero if and only if it is discrete.
When it is of dimension zero, it is also closed.
\item[(2)] Let $X \subseteq Y$ be definable sets.
Then, the inequality $\dim(X) \leq \dim(Y)$ holds true.
\item[(3)] Let $\sigma$ be a permutation of the set $\{1,\ldots,n\}$.
The definable map $\overline{\sigma}:M^n \rightarrow M^n$ is defined by $\overline{\sigma}(x_1, \ldots, x_n) = (x_{\sigma(1)},\ldots, x_{\sigma(n)})$.
Then, we have $\dim(X)=\dim(\overline{\sigma}(X))$ for any definable subset $X$ of $M^n$.
\item[(4)] Let $X$ and $Y$ be definable sets.
We have $\dim(X \times Y) = \dim(X)+\dim(Y)$.
\item[(5)] Let $X$ and $Y$ be definable subsets of $M^n$.
We have 
\begin{align*}
\dim(X \cup Y)=\max\{\dim(X),\dim(Y)\}\text{.}
\end{align*}
\item[(6)] Let $f:X \rightarrow M^n$ be a definable map. 
We have $\dim(f(X)) \leq \dim X$.
\item[(7)] Let $f:X \rightarrow M^n$ be a definable map. 
Let $\mathcal D(f)$ denote the set of points at which the map $f$ is discontinuous. 
The inequality $\dim(\mathcal D(f)) < \dim X$ holds true.
\item[(8)] Let $X$ be a definable set.
Let $\partial X$ denote the frontier of $X$ defined by $\partial X = \mycl(X) \setminus X$.
We have $\dim (\partial X) < \dim X$.
\item[(9)] Let $\varphi:X \rightarrow Y$ be a definable surjective map whose fibers are equi-dimensional; that is, the dimensions of the fibers $\varphi^{-1}(y)$ are constant.
We have $\dim X = \dim Y + \dim \varphi^{-1}(y)$ for all $y \in Y$.  
\item[(10)] Let $X$ be a definable subset of $M^n$.
There exists a point $x \in X$ such that we have $\dim(X \cap B)=\dim(X)$ for any open box $B$ containing the point $x$.
\end{enumerate}
\end{proposition}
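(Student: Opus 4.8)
The plan is to locate the required point as a point of continuity of a definable section of the projection that realizes the dimension of $X$. Write $d=\dim X$. The case $d=0$ is immediate: by Proposition~\ref{prop:dim}(1) the set $X$ is discrete, so for any $x\in X$ and any open box $B\ni x$ the intersection $X\cap B$ is a nonempty discrete set, whence $\dim(X\cap B)=0=d$. Assume therefore $d\ge 1$. By the definition of dimension there is a coordinate projection onto $d$ of the coordinates whose image has nonempty interior, and after applying a permutation of coordinates (which changes nothing by Proposition~\ref{prop:dim}(3)) I may assume it is the projection $\pi:M^n\to M^d$ onto the first $d$ coordinates. Write points of $M^n$ as $(u,v)$ with $u\in M^d$ and $v\in M^{n-d}$, and set $V=\myint(\pi(X))$, a nonempty open definable subset of $M^d$.

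First I would produce a definable section. Since $V\subseteq\pi(X)$, the fiber $\{v\in M^{n-d}\,:\,(u,v)\in X\}$ is nonempty for every $u\in V$, so the definable choice lemma of this section yields a definable map $h:V\to M^{n-d}$ with $(u,h(u))\in X$ for all $u\in V$. Next I would invoke Proposition~\ref{prop:dim}(7): the discontinuity locus $\mathcal D(h)$ satisfies $\dim\mathcal D(h)<\dim V=d$, so by Proposition~\ref{prop:dim}(2) the set $V\setminus\mathcal D(h)$ is nonempty. Fix any $a\in V\setminus\mathcal D(h)$ and put $x=(a,h(a))\in X$. Note that $a$ is an interior point of $\pi(X)$ and that $h$ is continuous at $a$; these are the two properties of $x$ that the verification will use. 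I claim $x$ is the desired point.

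To verify the claim, let $B\ni x$ be an arbitrary open box and write $B=B'\times B''$ with $B'\ni a$ an open box in $M^d$ and $B''\ni h(a)$ an open box in $M^{n-d}$. Continuity of $h$ at $a$ yields an open box $B_0'$ with $a\in B_0'\subseteq B'\cap V$ and $h(B_0')\subseteq B''$. Then for every $u\in B_0'$ the point $(u,h(u))$ lies in $X\cap B$ and projects to $u$, so $B_0'\subseteq\pi(X\cap B)$. Since $B_0'$ is a nonempty open box in $M^d$, Proposition~\ref{prop:dim}(2) gives $\dim\pi(X\cap B)=d$, and then Proposition~\ref{prop:dim}(6) applied to the restriction of $\pi$ to $X\cap B$ gives $\dim(X\cap B)\ge\dim\pi(X\cap B)=d$; the reverse inequality $\dim(X\cap B)\le\dim X=d$ holds by Proposition~\ref{prop:dim}(2). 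Hence $\dim(X\cap B)=d$ for every open box $B$ containing $x$, as required.

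The substantive points, rather than any single hard computation, are the availability of a definable section (this is exactly where the expansion-of-an-ordered-group hypothesis enters, through definable choice) and the extraction of a point of continuity inside the open set $V$; once $x$ is placed at such a point, the projection "sandwiches" $\dim(X\cap B)$ between $d$ and $d$ via parts (2) and (6). The main obstacle I anticipate is purely one of bookkeeping: ensuring that the chosen point $a$ is genuinely interior to $\pi(X)$ and that the continuity of $h$ at $a$ is exploited correctly to feed a full-dimensional subbox $B_0'$ into $\pi(X\cap B)$.
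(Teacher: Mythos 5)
There is a genuine gap, and it is the very step you single out as the heart of your argument. Your construction of the section $h:V \rightarrow M^{n-d}$ invokes the definable choice lemma (Lemma \ref{lem:definable_choice}), which in this paper is proved only for definably complete expansions of \emph{ordered groups}; you even remark that ``this is exactly where the expansion-of-an-ordered-group hypothesis enters.'' But Proposition \ref{prop:dim} carries no such hypothesis: it is stated for an arbitrary definably complete locally o-minimal structure $\mathcal M=(M,<,\ldots)$, and indeed the paper later uses it in that generality (e.g.\ in Lemma \ref{lem:normal} and Proposition \ref{prop:baire2}, before any group structure is fixed). The failure is not cosmetic: in a structure such as $(\mathbb R,<)$, which is o-minimal and hence definably complete and locally o-minimal, there is no definable choice --- there is no definable way to select a point from each interval in a definable family, since every definable function is piecewise constant or a coordinate --- so your section $h$ need not exist, yet part (10) still holds there. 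Any correct proof in the stated generality must locate the ``good point'' without Skolem functions, e.g.\ by showing that the definable set of points of $X$ having some neighborhood $B$ with $\dim(X\cap B)<\dim X$ is itself of dimension $<\dim X$ (a Baire-type localization argument), which is essentially how the cited reference \cite{FKK} proceeds; the paper itself offers no internal proof, only the citation to \cite[Proposition 2.8]{FKK}.

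Two smaller points. First, the statement you were asked to prove is the entire ten-part proposition, and your proposal establishes only part (10), taking parts (1), (2), (3), (6) and (7) as given; as a proof of the stated proposition it is therefore incomplete even setting aside the issue above (though deriving (10) from the earlier parts would be a legitimate structure for the last step of a full proof, provided no circularity is introduced). Second, the case $X=\emptyset$ (where $\dim X=-\infty$ and no point $x$ can be chosen) should be excluded or remarked upon before fixing $d=\dim X$. The verification you give after the section is constructed --- sandwiching $\dim(X\cap B)$ between $d$ and $d$ via the projection and parts (2) and (6) --- is correct; the problem is solely that the section cannot be obtained from the hypotheses of the proposition.
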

\begin{proof}
See \cite[Proposition 2.8]{FKK}.
\end{proof}

\begin{corollary}\label{cor:point}
Consider a definably complete locally o-minimal structure $\mathcal M=(M,<,\ldots)$.
Let $X$ and $Y$ be definable sets such that $X \subseteq Y$ and $\dim X=\dim Y$.
Then, there exists a nonempty open box $B$ such that $X \cap B=Y\cap B$ and $\dim X \cap B=\dim X$.
\end{corollary}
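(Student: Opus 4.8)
The plan is to isolate the part of $X$ that lies away from $Y \setminus X$ and apply the ``generic point'' statement, Proposition \ref{prop:dim}(10), there. Write $d = \dim X = \dim Y$ and set $Z = Y \setminus X$, so that $Z$ is definable, $X \cap Z = \emptyset$, and $Y = X \cup Z$. Since the desired conclusion $X \cap B = Y \cap B$ is equivalent to $B \cap Z = \emptyset$, I want a box that avoids $Z$ entirely while still meeting $X$ in dimension $d$. The natural candidate region is the open set $M^n \setminus \mycl(Z)$, and the natural candidate piece of $X$ is $W := X \setminus \mycl(Z)$. (If $X$ is empty the statement is vacuous, so assume $d \geq 0$.)

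First I would check that discarding $\mycl(Z)$ costs no dimension, i.e.\ that $\dim W = d$. The key observation is that $X$ is disjoint from $Z$, so
\[
X \cap \mycl(Z) = X \cap \bigl(Z \cup (\mycl(Z) \setminus Z)\bigr) = X \cap \partial Z \subseteq \partial Z \text{,}
\]
where $\partial Z = \mycl(Z) \setminus Z$ is the frontier of $Z$. By the frontier inequality, Proposition \ref{prop:dim}(8), we have $\dim(\partial Z) < \dim Z$, and by monotonicity, Proposition \ref{prop:dim}(2), $\dim Z \leq \dim Y = d$; hence $\dim(X \cap \mycl(Z)) \leq \dim(\partial Z) < d$. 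Decomposing $X = (X \cap \mycl(Z)) \cup W$ and invoking additivity of dimension under finite unions, Proposition \ref{prop:dim}(5), forces $\dim W = d$, since the other piece has dimension strictly less than $d$.

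Finally, I would apply Proposition \ref{prop:dim}(10) to the definable set $W$ to obtain a point $x \in W$ such that $\dim(W \cap B) = \dim W = d$ for every open box $B$ containing $x$. Because $x \in W = X \setminus \mycl(Z)$ and $\mycl(Z)$ is closed, the point $x$ has an open box neighborhood $B$ with $B \subseteq M^n \setminus \mycl(Z)$, so in particular $B \cap Z = \emptyset$; this gives $Y \cap B = (X \cup Z) \cap B = X \cap B$. Moreover $W \cap B \subseteq X \cap B$ yields $\dim(X \cap B) \geq \dim(W \cap B) = d$ by Proposition \ref{prop:dim}(2), while $\dim(X \cap B) \leq \dim X = d$ trivially, so $\dim(X \cap B) = d$. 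Thus $B$ has both required properties. I expect the only real obstacle to be the dimension bookkeeping of the second paragraph: the crux is recognizing that $X \cap \mycl(Z)$ is trapped inside the frontier $\partial Z$ and therefore has dimension $< d$, which is what lets me delete $\mycl(Z)$ without losing the top dimension.
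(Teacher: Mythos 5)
Your proof is correct and follows essentially the same route as the paper's: both identify $X \cap \mycl(Y \setminus X)$ as lying in the frontier of $Y \setminus X$ (hence of dimension $< d$), conclude $\dim\bigl(X \setminus \mycl(Y \setminus X)\bigr) = d$ via Proposition \ref{prop:dim}(2),(5),(8), and then apply Proposition \ref{prop:dim}(10) to that set, shrinking the box to avoid the closed set $\mycl(Y \setminus X)$. The only difference is notational (your $Z$ is $Y \setminus X$ where the paper's is its closure), so there is nothing to change.
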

\begin{proof}
	Set $d=\dim X$, $Z=\mycl(Y \setminus X)$, $V=X \cap Z$ and $W=X \setminus Z$.
	The set $V$ is contained in the frontier of $Y \setminus X$, and it is of dimension smaller than $d$ by Proposition \ref{prop:dim}(2),(5),(8).
	We have $\dim W=d$ by Proposition \ref{prop:dim}(5).
	Take a point $x \in W$ such that, for any open box $B$ containing the point $x$, the equality $ \dim B \cap W = d$ holds true by Proposition \ref{prop:dim}(10).
	By the definitions of $Z$ and $W$, if we take a sufficiently small open box $B$ containing the point $x$, the intersection $B \cap W$ has an empty intersection with $Z$.
	It implies that $X \cap B=Y\cap B$.
\end{proof}

We also need the fact that a definably complete locally o-minimal structure is definably Baire.
\begin{definition}
	Consider an expansion of a linearly ordered structure $\mathcal R=(R,<,0,\ldots)$.
	A \textit{parameterized family} of definable sets $\{X{\langle x \rangle}\}_{x \in S}$ is the family of the fibers of a definable set; that is, there exists a definable set $\mathcal X$ with $X{\langle x \rangle}=\mathcal X_x$ for all $x$ in a definable set $S$.
	
	A parameterized family of definable sets $\{X{\langle r\rangle}\}_{r>0}$  is called a \textit{definable increasing family} if $X{\langle r\rangle} \subseteq X{\langle r'\rangle}$ whenever $0<r<r'$.
	A definably complete expansion of a densely linearly ordered structure is \textit{definably Baire} if the union $\bigcup_{r>0} X{\langle r\rangle}$ of any definable increasing family $\{X{\langle r\rangle}\}_{r>0}$ with $\myint\left(\overline{X{\langle r\rangle}}\right)=\emptyset$ has an empty interior.
\end{definition}

\begin{proposition}\label{prop:baire}
	A definably complete locally o-minimal structure is definably Baire.
\end{proposition}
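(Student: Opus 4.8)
The plan is to argue by contradiction, extracting from the increasing family a definable ``first entrance'' function and then using the dimension theory of discontinuity and frontier loci to force one member of the family to have nonempty interior. So suppose $\{X\langle r\rangle\}_{r>0}$ is a definable increasing family with $\myint(\mycl(X\langle r\rangle))=\emptyset$ for every $r>0$, yet $U:=\bigcup_{r>0}X\langle r\rangle$ has nonempty interior, and fix an open box $B\subseteq U$. For each $x\in B$ the set $\{r>0\;:\;x\in X\langle r\rangle\}$ is nonempty (because $x\in U$) and bounded below by $0$, so definable completeness lets me set $\rho(x):=\inf\{r>0\;:\;x\in X\langle r\rangle\}$. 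Since the family is definable, the relation $x\in X\langle r\rangle$ is definable, and hence the function $\rho:B\to M$ is definable and finite-valued, with $\rho(x)\geq 0$. The key monotonicity feature I will exploit is that, the family being increasing, $\rho(x)<s$ implies $x\in X\langle s\rangle$: indeed some $r<s$ satisfies $x\in X\langle r\rangle\subseteq X\langle s\rangle$.

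Next I would replace $B$ by a smaller box on which $\rho$ is continuous. Let $\mathcal D(\rho)$ be the discontinuity locus of $\rho$. By Proposition~\ref{prop:dim}(7) we have $\dim\mathcal D(\rho)<\dim B=n$, and by Proposition~\ref{prop:dim}(5),(8) the closure $\mycl(\mathcal D(\rho))$ has the same dimension, hence also dimension $<n$; therefore $\mycl(\mathcal D(\rho))$ has empty interior. Consequently $B\setminus\mycl(\mathcal D(\rho))$ is a nonempty open set, so it contains an open box $B'$, and $\rho$ is continuous at every point of $B'$.

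Finally, I would localize at a single point to produce the contradiction. Pick $x_0\in B'$ and choose $s_0\in M$ with $s_0>\rho(x_0)$, which is possible because $M$ has no largest element; note $s_0>0$ since $\rho(x_0)\geq 0$. As $\rho$ is continuous at $x_0$ and $\rho(x_0)<s_0$, there is an open box $B''$ with $x_0\in B''\subseteq B'$ and $\rho(x)<s_0$ for every $x\in B''$. By the monotonicity feature above, $B''\subseteq X\langle s_0\rangle\subseteq\mycl(X\langle s_0\rangle)$, so $\myint(\mycl(X\langle s_0\rangle))\neq\emptyset$, contradicting the hypothesis. This contradiction shows $\myint(U)=\emptyset$, which is what the definition of definably Baire requires.

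I expect the main obstacle to be the second step: one must know not merely that the discontinuity locus of the definable function $\rho$ is lower-dimensional, but that passing to its closure still does not fill an open box, so that a box of continuity genuinely survives. This is precisely where local o-minimality is used, through the dimension results of Proposition~\ref{prop:dim}. It is worth emphasizing that definability is indispensable: classically an increasing, continuum-indexed union of nowhere dense sets can cover the whole line (for instance via a well-ordering of order type $\omega_1$), and it is exactly the definability of $\rho$, combined with definable completeness, that rules this out.
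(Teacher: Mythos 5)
Your proof is correct, and it takes a genuinely different route from the paper's: the paper disposes of this proposition with a two-line citation of \cite[Proposition 2.9]{Fuji4} and \cite[Theorem 2.5]{FKK}, whereas you give a direct argument using only definable completeness (to define the first-entrance function $\rho(x)=\inf\{r>0\;|\;x\in X\langle r\rangle\}$, which is finite and nonnegative on $B$) and the dimension theory of Proposition \ref{prop:dim}: item (7) to bound the discontinuity locus of $\rho$, items (5) and (8) to pass to its closure and extract a sub-box $B'$ of continuity, and then continuity plus the monotonicity of the family to force $B''\subseteq X\langle s_0\rangle\subseteq\mycl(X\langle s_0\rangle)$, contradicting $\myint\left(\mycl(X\langle s_0\rangle)\right)=\emptyset$. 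Every step checks out; in particular the inference from $\rho(x)<s$ to the existence of $r<s$ with $x\in X\langle r\rangle\subseteq X\langle s\rangle$ is exactly the right use of the infimum, and invoking Proposition \ref{prop:dim} is legitimate since it precedes Proposition \ref{prop:baire} in the paper's deductive order. What your argument buys is self-containedness and the insight that definable Baire-ness is a formal consequence of the dimension axioms alone; what the paper's citation buys is brevity, and it shifts to the literature (rather than to Proposition \ref{prop:dim}(7), itself imported from \cite{FKK}) the burden of certifying that no circularity hides behind the quoted results.

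One correction to your closing remark, which is motivational and does not affect the proof: it is not true that an increasing, continuum-indexed family of nowhere dense sets can cover $\mathbb R$. If such a chain has countable cofinality, its union is meager, hence not all of $\mathbb R$ by the classical Baire category theorem; if it has uncountable cofinality, the countable dense set $\mathbb Q$ would be absorbed into a single member of the chain, and that member would then fail to be nowhere dense. In particular, no well-ordering of $\mathbb R$ of type $\omega_1$ has all initial segments nowhere dense. So over the reals the conclusion holds for purely classical reasons; what definability, definable completeness and local o-minimality are really needed for is the general case, where $M$ may be non-archimedean or of uncountable cofinality and the classical Baire category theorem is unavailable.
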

\begin{proof}
	It immediately follows from \cite[Proposition 2.9]{Fuji4} and \cite[Theorem 2.5]{FKK}.
\end{proof}

\begin{proposition}\label{prop:baire2}
	Consider a definably complete locally o-minimal structure.
	Let $\{X{\langle r\rangle}\}_{r>0}$ be a definable increasing family.
	We have $$\dim \left(\bigcup_{r>0} X{\langle r\rangle}\right)=\sup\{\dim(X{\langle r\rangle})\;|\;r>0\}\text{.}$$
\end{proposition}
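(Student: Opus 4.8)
The plan is to prove the two inequalities separately. Write $U = \bigcup_{r>0} X\langle r\rangle$ and $d = \sup\{\dim X\langle r\rangle \mid r > 0\}$, and say the sets live in $M^n$, so that the dimensions $\dim X\langle r\rangle$ range over the finite set $\{-\infty, 0, 1, \ldots, n\}$ and the supremum is in fact attained. The inequality $d \le \dim U$ is immediate: each $X\langle r\rangle$ is contained in $U$, so $\dim X\langle r\rangle \le \dim U$ by Proposition \ref{prop:dim}(2), and taking the supremum over $r$ gives $d \le \dim U$. If $d = -\infty$ every $X\langle r\rangle$, and hence $U$, is empty and the equality is trivial, so I may assume $d \ge 0$.

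For the reverse inequality I would argue by contradiction, assuming $e := \dim U > d$, and reduce to the definable Baire property through a coordinate projection. By Definition \ref{def:dim} there is a coordinate projection $\pi : M^n \to M^e$ such that $\pi(U)$ has nonempty interior. Set $Y\langle r\rangle := \pi(X\langle r\rangle)$. Since $\pi$ is definable and preserves inclusions, $\{Y\langle r\rangle\}_{r>0}$ is again a definable increasing family, and $\bigcup_{r>0} Y\langle r\rangle = \pi(U)$ because $\pi$ commutes with unions. By Proposition \ref{prop:dim}(6) we have $\dim Y\langle r\rangle \le \dim X\langle r\rangle \le d < e$ for every $r > 0$.

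The crux is then to convert this dimension bound into the hypothesis demanded by definable Baire, namely $\myint\left(\overline{Y\langle r\rangle}\right) = \emptyset$. Writing $\overline{Y\langle r\rangle} = Y\langle r\rangle \cup \partial Y\langle r\rangle$ and invoking Proposition \ref{prop:dim}(8),(5), I obtain $\dim \overline{Y\langle r\rangle} = \dim Y\langle r\rangle < e$. A subset of $M^e$ of dimension strictly less than $e$ cannot contain an open box, since such a box would have full dimension $e$ and force $\dim \overline{Y\langle r\rangle} \ge e$ by Proposition \ref{prop:dim}(2); hence $\myint\left(\overline{Y\langle r\rangle}\right) = \emptyset$. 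Now Proposition \ref{prop:baire} applies to the family $\{Y\langle r\rangle\}_{r>0}$ in $M^e$ and yields $\myint(\pi(U)) = \emptyset$, contradicting the choice of $\pi$. Therefore $\dim U \le d$, and combined with the first inequality we conclude $\dim U = d$.

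I expect the main obstacle to be the bookkeeping around the projection: one must verify that passing from $\{X\langle r\rangle\}$ to $\{\pi(X\langle r\rangle)\}$ preserves both definability and monotonicity, and, more delicately, that the correct hypothesis to feed into definable Baire concerns the closures $\overline{Y\langle r\rangle}$ rather than the sets $Y\langle r\rangle$ themselves. The frontier-dimension inequality of Proposition \ref{prop:dim}(8) is precisely what bridges that gap, and recognizing this as the point where the dimension theory meets the Baire property is the conceptual heart of the argument.
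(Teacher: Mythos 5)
Your proof is correct, and it reaches the definable-Baire contradiction by a route that genuinely differs from the paper's. The paper also argues by contradiction via Proposition \ref{prop:baire}, but in the opposite direction: assuming $\dim\left(\bigcup_{r>0}X\langle r\rangle\right)>d$, it invokes \cite[Proposition 2.8(9)]{FKK} to produce an open box $B$ in $M^{d+1}$ and a definable continuous injective map $\varphi$ from $B$ into the union that is a homeomorphism onto its image, pulls the family back to $V\langle r\rangle=\varphi^{-1}(X\langle r\rangle)$, checks (just as you do) that the closures $\overline{V\langle r\rangle}$ have dimension at most $d$ and hence empty interior, and concludes that the open box $B$ has empty interior --- a contradiction. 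You instead push the family forward along a coordinate projection $\pi:M^n\rightarrow M^e$ with $e=\dim U$, whose existence is immediate from Definition \ref{def:dim}, and apply the Baire property in $M^e$ to contradict $\myint(\pi(U))\neq\emptyset$. Your version is more self-contained: it uses only Definition \ref{def:dim}, Proposition \ref{prop:dim}(2),(5),(6),(8) and Proposition \ref{prop:baire}, whereas the paper leans on the nontrivial parametrization result from \cite{FKK} (existence of a definable open-box embedding into any set of dimension at least $d+1$); the price is that you must verify the routine bookkeeping that $\{\pi(X\langle r\rangle)\}_{r>0}$ is again a definable increasing family, which you do. In both arguments the conceptual pivot is the one you single out: the frontier inequality of Proposition \ref{prop:dim}(8) upgrades ``dimension less than the ambient one'' to ``closure has empty interior,'' which is exactly the hypothesis the definable Baire property requires.
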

\begin{proof}
	Let $\mathcal M=(M,<,+,0,\ldots)$ be the given definably complete locally o-minimal structure.
	Set $d=\sup\{\dim(X{\langle r\rangle})\;|\;r>0\}$ and $Y=\bigcup_{r>0} X{\langle r\rangle}$.
	We lead to a contradiction assuming that $\dim (Y)>d$.
	
	There exist a definable open box $B$ in $M^{d+1}$ and a definable continuous injective map $\varphi:U \rightarrow Y$ which is homeomorphic onto its image by \cite[Proposition 2.8(9)]{FKK}.
	Set $V{\langle r\rangle}=\varphi^{-1}(X{\langle r\rangle})$.
	We have $B=\bigcup_{r>0} V{\langle r\rangle}$ and $\dim V{\langle r\rangle} \leq d$ for all $r>0$ by Proposition \ref{prop:dim}(6).
	We also get $\dim \overline{V{\langle r\rangle}} \leq d$ by Proposition \ref{prop:dim}(5),(8).
	The definable sets $\overline{V{\langle r\rangle}}$ have empty interiors for all $r>0$ by the definition of dimension.
	Since $B=\bigcup_{r>0} V{\langle r\rangle}$, the open box $B$ has an empty interior by Proposition \ref{prop:baire}.
	Contradiction. 
\end{proof}

We next prove a definable choice lemma.
\begin{lemma}[Definable choice lemma]\label{lem:definable_choice}
Consider a definably complete expansion of an ordered group $\mathcal M=(M,<,0,+\ldots)$.
Let $\pi:M^{m+n} \rightarrow M^m$ be a coordinate projection.
Let $X$ and $Y$ be definable subsets of $M^m$ and $M^{m+n}$, respectively,  satisfying the equality $\pi(Y)=X$.
There exists a definable map $\varphi:X \rightarrow Y$ such that $\pi(\varphi(x))=x$ for all $x \in X$.
\end{lemma}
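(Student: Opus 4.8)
The plan is to reduce to the one-variable case $n=1$ and then to produce, uniformly in $x$, a definable choice of one point from each fibre, using definable completeness to manufacture canonical points. I would first prove the reduction to $n=1$ by induction on $n$. Factor $\pi$ as $\pi=\pi'\circ\rho$, where $\rho:M^{m+n}\to M^{m+n-1}$ forgets the last coordinate and $\pi':M^{m+n-1}\to M^m$ retains the first $m$ coordinates. The image $Z:=\rho(Y)$ is definable and satisfies $\pi'(Z)=\pi(Y)=X$. The case $n=1$, applied to $\rho$ and $Y$, gives a definable section $s:Z\to Y$ of $\rho$, while the inductive hypothesis, applied to $\pi'$ and $Z$, gives a definable section $t:X\to Z$ of $\pi'$; then $\varphi:=s\circ t$ is a section of $\pi$. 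Hence it suffices to treat $Y\subseteq M^{m+1}$ with $\pi(Y)=X$ and to produce a definable $\psi:X\to M$ with $(x,\psi(x))\in Y$ for every $x$, writing $A_x:=\{t\in M:(x,t)\in Y\}$ for the nonempty definable fibre over $x$.

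For the one-variable case I would select from $A_x$ by a definable case distinction driven by definable completeness, which guarantees that $\inf A_x$ and $\sup A_x$ exist in $M\cup\{\pm\infty\}$ and depend definably on $x$. A useful preliminary fact is that a definably complete ordered group is divisible: for $c>0$ and $k\ge 1$ the definable continuous map $y\mapsto ky$ takes a value $\le c$ (at $y=0$) and a value $\ge c$ (at $y=c$), so by the intermediate value property of definably connected intervals it attains $c$; consequently midpoints of intervals are available. Partitioning $X$ definably according to the shape of $A_x$, I would set $\psi(x)=\inf A_x$ (respectively $\sup A_x$) on the definable locus where this value is finite and belongs to $A_x$ — in particular whenever $A_x$ is closed and bounded on that side. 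On the complementary locus, where neither the infimum nor the supremum is attained, I would use midpoints supplied by divisibility to pick interior points of the bounded intervals and a fixed parameter such as $0$ for the suitably large fibres, deferring the genuinely problematic fibres to the analysis below. Assembling these partial selections yields a single definable $\psi$.

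The step I expect to be the main obstacle is the uniform selection from fibres that are bounded below but do not attain their infimum, that is, non-closed fibres. Under the present hypotheses — definable completeness and an ordered group, but \emph{not} local o-minimality — I cannot invoke the monotonicity theorem to write $A_x$ as a finite union of points and intervals and then take, say, the midpoint of its leftmost interval or its leftmost isolated point, since such a fibre need have no leftmost piece. Overcoming this is the technical heart of the argument: I would aim to reduce this case to the closed (attained) case treated above, using definable completeness to pass uniformly to a suitable closed definable set from which a canonical point can be extracted and then shown to lie in $A_x$, and finally verify that the globally assembled map is definable and genuinely satisfies $\pi(\varphi(x))=x$ for all $x\in X$.
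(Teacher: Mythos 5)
Your reduction to the one-variable case is exactly the paper's: the paper also factors $\pi$ through $M^{m+n-1}$, applies the case $n=1$ to the last coordinate and the inductive hypothesis to the rest, and composes the two sections. Your setup for the one-variable case (use $\inf A_x$ or $\sup A_x$ where attained, midpoints via divisibility, a translate by a fixed positive parameter for fibres containing a half-line) also matches the paper's construction. But the proposal stops precisely where a proof is required: for fibres whose infimum is finite but not attained you offer only the intention to ``reduce this case to the closed (attained) case'' by ``passing uniformly to a suitable closed definable set'', with no construction behind it. That is a genuine gap, and it is the heart of the lemma: under definable completeness alone a fibre can look like $\{1/n : n \geq 1\}$ (every expansion of the ordered additive group of the reals is definably complete), and such a set has no interval sitting immediately above its infimum and no evident canonical member, so it is not clear that the hoped-for reduction is possible at all.

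For comparison, here is what the paper does at that point. It splits the fibre into $Y_x^+=\{y \in Y_x : y \geq 0\}$ and $Y_x^-=\{y \in Y_x : y \leq 0\}$, so that $y_1=\inf Y_x^+$ is automatically finite when $Y_x^+\neq\emptyset$; if $y_1 \notin Y_x$, it sets $y_2=\sup\{y>y_1 \;|\; y' \in Y_x^+ \text{ for all } y_1<y'<y\}$ and selects $y_1+c$ when $y_2=\infty$ and the midpoint $(y_1+y_2)/2$ otherwise. Observe that this recipe returns a point of $Y_x$ only when the set defining $y_2$ is nonempty, i.e.\ only when $Y_x^+$ contains a whole interval $(y_1,y_2)$ just above its non-attained infimum --- exactly the structure you noted is unavailable. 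That interval is guaranteed once one applies local o-minimality to the fibre at $y_1$ (and local o-minimality is present in every application of the lemma in the paper), but it does not follow from definable completeness by itself; for a fibre like $\{1/n : n\geq 1\}$ the paper's formula produces nothing either. So your diagnosis of where the difficulty sits is accurate, and your instinct that mere definable completeness will not resolve it is sound; what is missing from your proposal is either the added tameness hypothesis under which the problematic fibres cannot occur (which is how the paper's construction in effect succeeds) or the genuinely new idea that your last paragraph only promises. As written, the proposal proves the lemma only for sets all of whose fibres attain an extremum or contain an interval adjacent to one, which is strictly weaker than the statement.
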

\begin{proof}
We may assume that $\pi$ is the coordinate projection onto the first $m$ coordinate without loss of generality.
We fix a positive element $c \in M$.
We prove the lemma by induction on $m$.

When $m=1$, we define $\varphi:X \rightarrow Y$ as follows:
Fix a point $x \in X$.
Consider the fiber $Y_x=\{y \in M\;|\; (x,y) \in Y\}$.
Set $Y_x^+=\{y \in Y_x\;|\; y \geq 0\}$ and $Y_x^-=\{y \in Y_x\;|\; y \leq 0\}$.
When the definable set $Y_x^+$ is not an empty set, consider the element $y_1=\inf(Y_x^+)$.
If $y_1 \in Y_x$, set $\varphi(x)=(x,y_1)$.
Otherwise, consider $y_2=\sup\{y > y_1\;|\;y' \in Y_x^+ \text{ for all }y_1<y'<y\} \in M \cup \{\infty\}$.
When $y_2=\infty$, put $\varphi(x)=(x,y_1+c)$.
Otherwise, set $\varphi(x)=(x,(y_1+y_2)/2)$.
We can define $\varphi:X \rightarrow Y$ in the same manner considering $Y_x^-$ instead of $Y_x^+$ when $Y_x^+$ is an empty set.

We next consider the case in which $n>1$.
Let $\pi_1:M^{m+n} \rightarrow M^{m+n-1}$ and $\pi_2:M^{m+n-1} \rightarrow M^{m}$ be the coordinate projections forgetting the last coordinate and forgetting the last $n-1$ coordinates, respectively.
Set $Z=\pi_1(Y)$.
We have $\pi_2(Z)=X$.
Applying the induction hypothesis to the pair of $X$ and $Z$, we get a definable map $\varphi_2:X \rightarrow Z$ such that the composition $\pi_2 \circ \varphi_2$ is the identity map.
Applying the lemma for $n=1$ to the pair of $Z$ and $Y$, we obtain a definable map $\varphi_1:Z \rightarrow Y$ with $\pi_1(\varphi_1(z))=z$ for all $z \in Z$.
The composition $\varphi=\varphi_1 \circ \varphi_2$ is the definable map we are looking for.
\end{proof}

The following curve selection lemma is worth to be mentioned.

\begin{corollary}\label{cor:curve_selection}
Consider a definably complete locally o-minimal expansion of an ordered group $\mathcal M=(M,<,+,0,\ldots)$.
Let $X$ be a definable subset of $M^n$ which is not closed.
Take a point $a \in \mycl(X) \setminus X$.
There exist a small positive $\varepsilon$ and a definable continuous map $\gamma:(0,\varepsilon) \rightarrow X$ such that $\lim_{t \to +0}\gamma(t)=a$.
\end{corollary}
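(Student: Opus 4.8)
The plan is to first produce a definable (not necessarily continuous) selection approaching $a$ at a controlled rate, and then upgrade it to a continuous curve by means of the one-variable limit proposition. Since $\mathcal M$ expands an ordered group we may use the sup-metric $|x-a|=\max_{1\le i\le n}|x_i-a_i|$, and I would introduce the definable set
$$Y=\{(t,x)\in M\times M^n\;|\;t>0,\ x\in X,\ |x-a|<t\}$$
together with the coordinate projection $\pi:M^{1+n}\rightarrow M$ onto the first factor.

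The first step is to verify $\pi(Y)=M_{>0}$. This is exactly a reformulation of the hypothesis $a\in\mycl(X)$: for every $t>0$ the open box $\mathcal B_n(a,t)$ is a neighborhood of $a$ and therefore meets $X$, so there is some $x\in X$ with $|x-a|<t$, which gives $t\in\pi(Y)$. Applying the definable choice lemma (Lemma \ref{lem:definable_choice}) to $\pi$, to $M_{>0}$ (in the role of the base set), and to $Y$, I obtain a definable map $M_{>0}\rightarrow Y$ whose composition with $\pi$ is the identity. Writing its second component as $\psi:M_{>0}\rightarrow X$, this is a definable map satisfying $|\psi(t)-a|<t$ for all $t>0$.

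Now fix any positive $s$. On the interval $(0,s)$ the map $\psi$ is bounded, since $|\psi(t)-a|<t<s$, so Proposition \ref{prop:limit}(1) yields some $0<u<s$ on which $\psi$ is continuous. I would then set $\gamma=\psi|_{(0,u)}$ and $\varepsilon=u$, so that $\gamma:(0,\varepsilon)\rightarrow X$ is a definable continuous map. The convergence $\lim_{t\to+0}\gamma(t)=a$ follows directly from the built-in estimate: for any $\varepsilon'>0$, taking $\delta=\min\{\varepsilon',u\}$ gives $|\gamma(t)-a|<t<\varepsilon'$ whenever $0<t<\delta$; existence and uniqueness of this limit are in any case guaranteed by Proposition \ref{prop:limit}(2).

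The only genuine difficulty is the passage from the merely definable selection $\psi$ to a continuous curve, and this is precisely what I expect to be the main obstacle: the definable choice lemma provides no regularity whatsoever. The point is that this obstacle is dissolved by Proposition \ref{prop:limit}(1), which forces a bounded definable function of one variable to be continuous on an initial segment $(0,u)$; the rate condition $|\psi(t)-a|<t$ encoded in $Y$ then supplies convergence to $a$ without further effort. I should double-check that all hypotheses are used where expected: the ordered group structure underlies the definition of $|x-a|$ and both of the cited results, definable completeness drives the choice lemma, and local o-minimality enters through Proposition \ref{prop:limit}.
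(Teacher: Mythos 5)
Your proof is correct. It has the same skeleton as the paper's proof --- an auxiliary definable set tying points of $X$ to a distance parameter, Lemma \ref{lem:definable_choice} for a definable section, Proposition \ref{prop:limit}(1) to obtain continuity on an initial segment, and the built-in distance bound to identify the limit --- but it differs in one genuine way. The paper works with the \emph{sphere} relation $Y=\{(x,t)\in X\times M\;|\;|a-x|=t\}$, and must then invoke local o-minimality directly to see that $\pi(Y)$, which accumulates at $0$ because $a\in\mycl(X)\setminus X$, actually contains a whole interval $(0,\varepsilon)$; only then can definable choice be applied over an interval. Your \emph{ball} relation $|x-a|<t$ makes the projection onto the $t$-axis equal to all of $M_{>0}$ outright, so that step disappears and local o-minimality enters your argument only once, through Proposition \ref{prop:limit}(1). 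The trade-off is mild: the paper's construction yields a curve normalized so that $|a-\gamma(t)|=t$ exactly (in particular $\gamma$ is injective, a normalization that can be convenient elsewhere), whereas yours satisfies only $|a-\gamma(t)|<t$; both conclusions are more than enough for the statement as given.
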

\begin{proof}
Let $\pi:M^{n+1} \rightarrow M$ be the projection onto the last coordinate.
Set $Y=\{(x,t) \in X \times M\;|\; |a-x|=t\}$.
Since $\mathcal M$ is locally o-minimal, the intersection $(-\delta, \delta) \cap \pi(Y)$ is a finite union of points and open intervals for a sufficiently small $\delta>0$.
Since the point $a$ belongs to the closure of $X$, the intersection $(-\delta, \delta) \cap \pi(Y)$ contains an open interval of the form $(0,\varepsilon)$ for some $\varepsilon>0$.
There exists a definable map $\gamma:(0,\varepsilon) \rightarrow X$ with $(\gamma(t),t) \in Y$ for all $0 < t < \varepsilon$ by Lemma \ref{lem:definable_choice}.
It is obvious that the map $\gamma$ is bounded.
Taking a smaller $\varepsilon>0$ if necessary, we may assume that $\gamma$ is continuous by Proposition \ref{prop:limit}(1).
The equality $\lim_{t \to +0}\gamma(t)=a$ is obvious by the definition of $\gamma$.
\end{proof}

We also use the following lemma:
\begin{lemma}\label{lem:proj_dim}
	Consider a definably complete locally o-minimal expansion of an ordered group $\mathcal M=(M,<,+,0,\ldots)$.
	Let $C$ and $P$ be definable subsets of $M^m$ and $M^n$, respectively.
	Let $X$ be a definable subset of $C \times P$.
	Let $\pi:M^{m+n} \rightarrow M^n$ denotes the projection onto the last $n$ coordinates.
	Assume that $\dim \pi(X)=\dim P$.
	Then there exists a point $(c,p) \in X$ such that $\dim \pi(X \cap W) = \dim P$ for all open boxes $W$ in $M^{m+n}$ containing the point $(c,p)$.
\end{lemma}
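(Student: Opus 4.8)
The plan is to reduce the assertion to showing that the ``bad'' points of $X$ project to a set of dimension strictly less than $d := \dim P = \dim \pi(X)$, and then to invoke the local maximal-dimension theorem, Proposition \ref{prop:dim}(10). Call a point $q \in X$ \emph{good} if $\dim \pi(X \cap W) = d$ for every open box $W \ni q$. Since $X \cap W \subseteq X$ forces $\dim \pi(X \cap W) \le d$, and since every open box around $q$ contains some $\mathcal B_{m+n}(q,\varepsilon)$ (and conversely sits inside a larger one), a point $q$ is good precisely when $\dim \pi(X \cap \mathcal B_{m+n}(q,\varepsilon)) = d$ for all $\varepsilon > 0$. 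Let $B$ be the definable set of non-good points, that is, those $q \in X$ for which there exists $\varepsilon>0$ with $\dim \pi(X \cap \mathcal B_{m+n}(q,\varepsilon)) < d$; this is definable because $\dim \pi(X \cap \mathcal B_{m+n}(q,\varepsilon)) \le d-1$ is equivalent to the first-order condition that $\rho(\pi(X \cap \mathcal B_{m+n}(q,\varepsilon)))$ has empty interior for every coordinate projection $\rho \colon M^n \to M^{d}$. Once I show $\dim \pi(B) < d$, the decomposition $\pi(X) = \pi(X \setminus B) \cup \pi(B)$ together with Proposition \ref{prop:dim}(5) gives $\dim \pi(X \setminus B) = d$, so $X \setminus B \neq \emptyset$ and any of its points is a good point of the required form.

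So the heart of the matter is the inequality $\dim \pi(B) < d$, which I would prove by contradiction. Assume $\dim \pi(B) = d$, the only alternative since $\pi(B) \subseteq \pi(X)$. Applying the definable choice lemma (Lemma \ref{lem:definable_choice}) to $\pi$ and $B$ produces a definable section $\sigma \colon \pi(B) \to B$ with $\pi \circ \sigma = \mathrm{id}$; put $S = \sigma(\pi(B)) \subseteq B$. For $q = \sigma(p) \in S$ one has $\sigma(\pi(q)) = \sigma(p) = q$, so $\pi|_S \colon S \to \pi(B)$ is a definable bijection with definable inverse $\sigma$. Proposition \ref{prop:dim}(6), used for both $\pi|_S$ and $\sigma$, then gives $\dim(S \cap U) = \dim \pi(S \cap U)$ for every definable $U$; in particular $\dim S = \dim \pi(B) = d$.

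I would then apply Proposition \ref{prop:dim}(10) to the $d$-dimensional set $S$, obtaining a point $q^* \in S$ with $\dim(S \cap W) = d$ for every open box $W \ni q^*$. But $q^* \in S \subseteq B$ is not good, so there is $\varepsilon > 0$ with $\dim \pi(X \cap \mathcal B_{m+n}(q^*,\varepsilon)) < d$. As $S \subseteq X$ we have $\pi(S \cap \mathcal B_{m+n}(q^*,\varepsilon)) \subseteq \pi(X \cap \mathcal B_{m+n}(q^*,\varepsilon))$, so Proposition \ref{prop:dim}(2) gives $\dim \pi(S \cap \mathcal B_{m+n}(q^*,\varepsilon)) < d$, whence $\dim(S \cap \mathcal B_{m+n}(q^*,\varepsilon)) < d$ by the dimension identity above. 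Taking $W = \mathcal B_{m+n}(q^*,\varepsilon)$ contradicts the defining property of $q^*$, and this contradiction establishes $\dim \pi(B) < d$.

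The main obstacle I anticipate is conceptual rather than computational: it is the device of replacing $X$ by the section $S$, on which $\pi$ becomes a dimension-preserving bijection, so that ``the projection of $X$ is locally of dimension $<d$ at $q^*$'' is upgraded to ``$S$ itself is locally of dimension $<d$ at $q^*$,'' which is exactly what Proposition \ref{prop:dim}(10) forbids. Secondary care is needed to confirm that $B$ is definable, which relies on the uniform definability of dimension across a definable family; I regard this as routine from Definition \ref{def:dim} but it must be recorded.
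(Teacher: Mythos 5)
Your proof is correct, but it takes a genuinely different route from the paper's. The paper also starts from definable choice, but it applies it to all of $\pi(X)$: it takes a section $\tau\colon\pi(X)\to X$, deletes the closure $D$ of the discontinuity locus of $\tau$ (using Proposition \ref{prop:dim}(7),(8) to see $\dim D<d$), picks by Proposition \ref{prop:dim}(10) a point $p\in E=\pi(X)\setminus D$ of full local dimension in $E$, and takes $(c,p)=\tau(p)$; the continuity of $\tau$ near $p$ is then what transfers ``$E$ has local dimension $d$ at $p$'' into ``$\pi(X\cap W)$ has dimension $d$ for every small box $W\ni\tau(p)$.'' You instead isolate the bad set $B$ where the conclusion fails, apply definable choice only over $\pi(B)$, and observe that the resulting section $S$ is a definable bijection onto $\pi(B)$ with definable inverse, so Proposition \ref{prop:dim}(6) applied in both directions makes $\pi|_S$ dimension-preserving on every definable subset; Proposition \ref{prop:dim}(10) applied to $S$ then contradicts badness directly. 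What your argument buys is that continuity plays no role at all --- you never need Proposition \ref{prop:dim}(7) --- and the dimension bookkeeping is arguably cleaner; what it costs is the extra obligation you correctly flag, namely the definability of $B$, which rests on the uniform (finitely many coordinate projections, ``empty interior'' is first-order) definability of the condition $\dim\pi(X\cap\mathcal B_{m+n}(q,\varepsilon))<d$ in the parameters $(q,\varepsilon)$; the paper's proof never needs dimension to be definable in families. Both proofs rely essentially on definable choice (hence on the group structure) and on Proposition \ref{prop:dim}(10), so neither generalizes past the other in that respect.
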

\begin{proof}
	We can find a definable map $\tau:\pi(X) \rightarrow X$ such that the composition $\pi \circ \tau$ is the identity map on $\pi(X)$ by Lemma \ref{lem:definable_choice}.
	Let $D$ be the closure of the set of points at which $\tau$ is discontinuous.
	We have $\dim D < \dim \pi(X)=\dim P$ by Proposition \ref{prop:dim}(5),(7),(8).
	Set $E=\pi(X) \setminus D$.
	We obtain $\dim E=\dim P$ by Proposition \ref{prop:dim}(5).
	Therefore there exists a point $p \in E$ with $\dim (E \cap U)=\dim P$ for all open box $U$ in $M^n$ containing the point $p$ by Proposition \ref{prop:dim}(10).
	Set $(c,p)=\tau(p)$.
	
	We demonstrate that the point $(c,p)$ satisfies the condition in the lemma.
	Take an arbitrary sufficiently small open box $W$ in $M^{m+n}$ containing the point $(c,p)$.
	We may assume that $D \cap \pi(W) = \emptyset$ because $p \not\in D$ and $D$ is closed.
	Since $\tau$ is continuous on $E$, the set $\tau^{-1}(W)=\pi(\tau(E) \cap W)$ is open in $E$.
	There exists an open box $U$ in $R^n$ such that $p \in U$ and $E \cap U \subset \pi(\tau(E) \cap W)$.
	Shrinking $U$ if necessary, we may assume that $U$ is contained in $\pi(W)$.
	We have $\dim P=\dim E \cap U$ by the definition of the point $p$.
	We then get $\dim P=\dim E \cap U \leq \dim \pi(\tau(E) \cap W) \leq \dim \pi(X \cap W) \leq \dim \pi(X) \leq \dim P$ by Proposition \ref{prop:dim}(2).
	We have demonstrated the lemma.
\end{proof}

\begin{definition}
Consider a definably complete expansion of an ordered group $\mathcal M=(M,<,0,+\ldots)$.
Let $X$ and $Y$ be definable subsets of $M^n$.
The \textit{distance} of $X$ to $Y$ is given by $\inf\{|x-y|\;|\;x \in X,\ y \in Y\}$.
\end{definition}

\begin{lemma}\label{lem:dist}
Consider a definably complete locally o-minimal expansion of an ordered group $\mathcal M=(M,<,0,+\ldots)$.
Let $X$ and $Y$ be mutually disjoint definable, closed and bounded subsets of $M^n$.
Then, the distance of $X$ to $Y$ is positive.
\end{lemma}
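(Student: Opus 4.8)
The plan is to argue by contradiction, replacing the classical compactness argument (the distance is attained on the compact set $X\times Y$) by local o-minimality together with the limit-existence result of Proposition \ref{prop:limit}, which is exactly the definable surrogate for compactness in this setting. First I would dispose of the trivial case: if $X$ or $Y$ is empty the distance is the infimum of an empty set and there is nothing to prove, so I assume both are nonempty. Then $\{|x-y|\;|\;x\in X,\ y\in Y\}$ is a definable subset of $M$ bounded below by $0$, so by definable completeness its infimum $d$ exists and satisfies $d\geq 0$. Suppose toward a contradiction that $d=0$.

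The heart of the argument is to extract a definable curve of near-minimizing pairs. Consider the set of positive achieved distances $\Pi=\{t\in M_{>0}\;|\;\exists (x,y)\in X\times Y,\ |x-y|=t\}$. Since $X\cap Y=\emptyset$, we have $|x-y|>0$ for all $x\in X$, $y\in Y$, so $0\notin\Pi$, yet $\inf\Pi=d=0$. Because $\mathcal M$ is locally o-minimal, $(-\delta,\delta)\cap\Pi$ is a finite union of points and open intervals for small $\delta>0$; as $0$ is the infimum but is not itself in $\Pi$, one of these components must be an interval of the form $(0,\varepsilon)$ (a finite set and intervals bounded away from $0$ could not have infimum $0$). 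This is the same maneuver used in the proof of Corollary \ref{cor:curve_selection}. Applying the definable choice lemma (Lemma \ref{lem:definable_choice}) to the definable set $\{(t,x,y)\;|\;t\in(0,\varepsilon),\ (x,y)\in X\times Y,\ |x-y|=t\}$ over its projection to the $t$-axis, I obtain a definable map $\gamma=(x(\cdot),y(\cdot)):(0,\varepsilon)\to X\times Y$ with $|x(t)-y(t)|=t$ for every $t$. Here the boundedness of $X$ and $Y$ enters crucially: it makes $\gamma$ a bounded definable map, so Proposition \ref{prop:limit} lets me shrink $\varepsilon$ to render $\gamma$ continuous and then form the limit $(x_0,y_0)=\lim_{t\to+0}\gamma(t)$.

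The contradiction is now immediate. Since $x(t)\in X$ for all $t$ and $x(t)\to x_0$, the point $x_0$ lies in $\mycl(X)=X$, and similarly $y_0\in Y$. Continuity of the norm gives $|x_0-y_0|=\lim_{t\to+0}|x(t)-y(t)|=\lim_{t\to+0}t=0$, whence $x_0=y_0$. But then $x_0\in X\cap Y$, contradicting disjointness. Therefore $d>0$, as claimed.

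I expect the only delicate step to be the passage from ``$0$ is the infimum of the achieved distances and is not attained'' to ``an entire interval $(0,\varepsilon)$ of distances is achieved.'' This is precisely where local o-minimality does the work that compactness would do over the reals, ruling out an accumulation of achieved values at $0$ without $0$ itself being achieved. The remaining ingredients—definable choice and the existence of the one-sided limit of a bounded definable curve—are handed to me directly by Lemma \ref{lem:definable_choice} and Proposition \ref{prop:limit}, and the hypothesis that $X$ and $Y$ are bounded is used for nothing other than invoking the latter.
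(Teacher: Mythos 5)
Your proof is correct and follows essentially the same route as the paper's: the paper proves the contrapositive, uses local o-minimality to conclude that the set of achieved distances contains $0$ or an interval $(0,\delta)$, then applies Lemma \ref{lem:definable_choice} to pick curves $x(t)\in X$, $y(t)\in Y$ with $|x(t)-y(t)|=t$, and invokes Proposition \ref{prop:limit} together with closedness of $X$ and $Y$ to produce a common limit point. Your contradiction framing and the separate handling of the case $0\in Z$ (which the paper keeps, and you exclude up front via disjointness) are only cosmetic differences.
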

\begin{proof}
We prove the contraposition of the lemma.
Assume that the distance of $X$ to $Y$ is zero.
We prove that the intersection $X \cap Y$ is not empty when $X$ and $Y$ are closed and bounded.
By local o-minimality and the assumption, the set $$Z=\{|x-y|\;|\; x \in X,\ y \in Y\}$$ contains the origin or an open interval of the form $(0,\delta)$, where $\delta$ is a positive element.
It is obvious that the  intersection $X \cap Y$ is not empty when $Z$ contains the origin. 
We concentrate on the case in which $Z$ contains the open interval $(0,\delta)$ in the rest of the proof.
Applying Lemma \ref{lem:definable_choice} to $Z$, we can choose definable maps $x,y:(0,\delta) \rightarrow M^n$ such that, for any $0<t<\delta$, $x(t) \in X$, $y(t) \in Y$ and $|x(t)-y(t)|=t$.
These maps are bounded because $X$ and $Y$ are bounded.
We may assume that they are continuous by taking a smaller $\delta$ if necessary by Proposition \ref{prop:limit}(1).
Set $a=\lim_{t \to 0}x(t)$ and $b=\lim_{t \to 0}y(t)$, which exist by Proposition \ref{prop:limit}(2).
Since both $X$ and $Y$ are closed, and both the maps $x$ and $y$ are continuous, we have $a \in X$, $b \in Y$ and $|a-b|=0$.
It means that the point $a(=b)$ is a common point of $X$ and $Y$.
\end{proof}

\section{Functions definable in a definably complete o-minimal expansion of an ordered group}\label{sec:function}
We prove several properties enjoyed by definable functions in this section.
We first prove a technical lemma.
\begin{lemma}\label{lem:main}
Consider a definably complete locally o-minimal expansion of an ordered group $\mathcal M=(M,<,+,0,\ldots)$.
Let $C$ be a definable, closed and bounded subset of $R^m$.
Let $\varphi, \psi: C \rightarrow M_{>0}$ be two definable functions.
Assume that the following condition is satisfied:
$$
\forall x \in C, \exists \delta>0, \forall x' \in C,\ |x'-x| < \delta \Rightarrow \varphi(x') \geq \psi(x) \text{.}
$$
Then the inequality $\inf \varphi(C)>0$ holds true.
\end{lemma}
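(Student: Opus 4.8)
The plan is to argue by contradiction, exploiting the compactness-like behaviour of $C$ through a definable curve. Since $\varphi$ takes values in $M_{>0}$, the image $S:=\varphi(C)\subseteq M_{>0}$ satisfies $\inf S\ge 0$, so it suffices to rule out $\inf S=0$. Assume $\inf S=0$. Applying local o-minimality at the point $0\in M$, the intersection of $S$ with a small interval around $0$ is a finite union of points and open intervals; since $S\subseteq M_{>0}$ and $\inf S=0$, the set $S$ accumulates at $0$, and as only finitely many isolated points are available one of the constituent open intervals must have left endpoint $0$. Hence $(0,\delta)\subseteq S=\varphi(C)$ for some $\delta>0$.

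Next I would produce a curve realising these small values. By the definable choice lemma (Lemma \ref{lem:definable_choice}) applied to the definable set $\{(t,x)\in(0,\delta)\times C\;|\;\varphi(x)=t\}$, there is a definable map $\sigma:(0,\delta)\to C$ with $\varphi(\sigma(t))=t$ for all $t\in(0,\delta)$. Because $C$ is bounded, $\sigma$ is bounded, so Proposition \ref{prop:limit} applies: after shrinking $\delta$ we may assume $\sigma$ is continuous on $(0,\delta)$ and the limit $x_0:=\lim_{t\to+0}\sigma(t)$ exists in $M^m$. Since every $\sigma(t)$ lies in $C$ and $C$ is closed, we get $x_0\in C$.

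Finally I would feed $x_0$ into the hypothesis. There is $\delta_0>0$ such that $\varphi(x')\ge\psi(x_0)$ for every $x'\in C$ with $|x'-x_0|<\delta_0$. As $\sigma(t)\to x_0$ when $t\to +0$, there is $\delta_1>0$ with $|\sigma(t)-x_0|<\delta_0$ whenever $0<t<\delta_1$; for such $t$ the hypothesis yields $t=\varphi(\sigma(t))\ge\psi(x_0)$. But $\psi(x_0)>0$ and the order is dense, so we may choose $t$ with $0<t<\min\{\delta_1,\psi(x_0)\}$, giving $t\ge\psi(x_0)>t$, a contradiction. Therefore $\inf\varphi(C)>0$.

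The only genuinely delicate points are of the second-step type: extracting the interval $(0,\delta)\subseteq\varphi(C)$ from $\inf S=0$ via local o-minimality, and ensuring that the limit $x_0$ returns to $C$, which is precisely where closedness of $C$ is used. Everything else is a direct application of the definable choice lemma and the limit proposition, with boundedness of $C$ being exactly what makes the latter available.
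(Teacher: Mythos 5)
Your proof is correct and takes essentially the same route as the paper's: both use local o-minimality at the infimum of $\varphi(C)$ to extract an interval contained in the image, lift it by the definable choice lemma (Lemma \ref{lem:definable_choice}), pass to the limit via Proposition \ref{prop:limit} using boundedness, land in $C$ by closedness, and then apply the hypothesis at the limit point. The only cosmetic difference is that you argue by contradiction at $\inf\varphi(C)=0$ (so the case $\inf\varphi(C)\in\varphi(C)$ disappears automatically since $\varphi$ is positive-valued), whereas the paper argues directly at $l=\inf\varphi(C)$ and must dispose of the case $l\in\varphi(C)$ separately.
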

\begin{proof}
Set $l=\inf\varphi(C) \geq 0$, which exists by the definable completeness of $\mathcal M$.
We have only to show that $l>0$.
Since $\mathcal M$ is locally o-minimal, we have $l \in \varphi(C)$ or there exists $u \in M$ with $l < u$ and $(l,u) \subseteq \varphi(C)$.
It is obvious that $l>0$ in the former case.
We consider the latter case in the rest of the proof.

Let $\Gamma$ be the graph of the function $\varphi$.
Let $\pi_1:M^{m+1} \rightarrow M^m$ and $\pi_2:M^{m+1} \rightarrow M$ be the projections onto the first $m$ coordinates and onto the last coordinate, respectively.
We can take a definable map $\eta:(l,u) \rightarrow \Gamma$ such that the composition $\pi_2 \circ \eta$ is the identity map on $(l,u)$ by Lemma \ref{lem:definable_choice}.
Note that the map $\eta$ is bounded because the domain of definition $C$ of $\varphi$ is bounded and the interval $(l,u)$ is bounded.
By Proposition \ref{prop:limit}(1), we may assume that $\eta$ is continuous by taking a smaller $u$ if necessary.

Set $z=\lim_{t \to l+}\eta(t)$, which uniquely exists by Proposition \ref{prop:limit}(2).
We have $\pi_2(z)=l$ by the definition of $\eta$.
Set $c=\pi_1(z)$.
It belongs to $C$ because $C$ is bounded and closed.
For any $t>l$ sufficiently close to $l$, $\pi_1(\eta(t)) \in C$ is close to the point $c$.
We have $\pi_2(\eta(t)) =\varphi(\pi_1(\eta(t))) \geq \psi(c)$ for such $t$ by the assumption.
We finally obtain $l=\lim_{t \to l+}\pi_2(\eta(t))) \geq \psi(c)>0$.
\end{proof}

We investigate the properties of functions definable in a definably complete o-minimal expansion of an ordered group.
\begin{definition}
Consider an expansion of a densely linearly ordered abelian group $\mathcal M=(M,<,+,0,\ldots)$.
Let $C$ and $P$ be definable sets.
Let $f: C \times P \rightarrow M$ be a definable function.
The function $f$ is \textit{equi-continuous} with respect to $P$ if the following condition is satisfied:
$$
\forall \varepsilon>0, \ \forall x \in C, \ \exists \delta >0, \ \forall p \in P, \ \forall x' \in C,\ \  |x-x'|< \delta \Rightarrow |f(x,p)-f(x',p)|< \varepsilon\text{.}
$$
The function $f$ is \textit{uniformly equi-continuous} with respect to $P$ if the following condition is satisfied:
$$
\forall \varepsilon>0, \ \exists \delta >0, \ \forall p \in P, \ \forall x, x' \in C,\ \  |x-x'|< \delta \Rightarrow |f(x,p)-f(x',p)|< \varepsilon\text{.}
$$

The function $f$ is \textit{pointwise bounded} with respect to $P$ if the following condition is satisfied:
$$
\forall x \in C,\  \exists N>0, \  \forall p \in P,\ \   |f(x,p)|<N\text{.}
$$
\end{definition}

\begin{proposition}\label{prop:equi-cont}
Consider a definably complete locally o-minimal expansion of an ordered group $\mathcal M=(M,<,+,0,\ldots)$.
Let $C$ and $P$ be definable sets.
Let $f: C \times P \rightarrow M$ be a definable function.
Assume that $C$ is closed and bounded.
Then $f$ is equi-continuous with respect to $P$ if and only if it is uniformly equi-continuous with respect to $P$.
\end{proposition}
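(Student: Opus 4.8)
The reverse implication, that uniform equi-continuity implies equi-continuity, is immediate: a single $\delta$ serving all pairs $x,x'$ simultaneously certainly serves each fixed $x$. The substance is the forward implication, which is the definable analogue of the classical fact that a continuous function on a compact set is uniformly continuous. Since genuine compactness is unavailable, the plan is to replace the usual finite-subcover argument by Lemma \ref{lem:main}, which here plays exactly the role of compactness for closed bounded definable sets. Concretely, I would fix $\varepsilon>0$, manufacture a definable ``local modulus of equi-continuity'' on $C$, prove that its infimum is positive by Lemma \ref{lem:main}, and then read off a single uniform $\delta$.

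For a threshold $\theta>0$ I would set
\[
\varphi_\theta(x) = \min\!\left(1,\ \sup\{\delta>0 : \forall x'\in C,\ \forall p\in P,\ |x-x'|<\delta \Rightarrow |f(x,p)-f(x',p)|<\theta \}\right).
\]
Since $C$, $P$ and $f$ are definable, the condition on $\delta$ is first order, so the supremum exists in $M\cup\{+\infty\}$ by definable completeness and $\varphi_\theta\colon C\to M$ is definable; the truncation at $1$ keeps it finite-valued. Equi-continuity guarantees that for every $x$ the defining set of $\delta$'s is nonempty, so $\varphi_\theta(x)>0$, i.e.\ $\varphi_\theta\colon C\to M_{>0}$. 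The one fact I would record for repeated use is the downward closedness of the admissible radii: for every $y\in C$ with $|x-y|<\varphi_\theta(x)$ and every $p\in P$ one has $|f(x,p)-f(y,p)|<\theta$.

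I would then apply Lemma \ref{lem:main} with $\varphi=\varphi_\varepsilon$ and $\psi=\tfrac12\varphi_{\varepsilon/2}$; note that the lemma allows two \emph{different} functions, and this freedom is what makes the argument work. The hypothesis to check is that for each $x$ there is a neighborhood on which $\varphi_\varepsilon(x')\ge\tfrac12\varphi_{\varepsilon/2}(x)$. Writing $r=\tfrac12\varphi_{\varepsilon/2}(x)$ and assuming $|x-x'|<r$, I claim $r$ is an admissible radius for $\varphi_\varepsilon$ at $x'$: for any $x''$ with $|x'-x''|<r$ one has $|x-x'|<r<\varphi_{\varepsilon/2}(x)$ and $|x-x''|<2r=\varphi_{\varepsilon/2}(x)$, whence $|f(x,p)-f(x',p)|<\varepsilon/2$ and $|f(x,p)-f(x'',p)|<\varepsilon/2$ for all $p$, and the triangle inequality gives $|f(x',p)-f(x'',p)|<\varepsilon$; since $r\le\tfrac12$ the truncation does not interfere, so $\varphi_\varepsilon(x')\ge r$. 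This is the crux: the factor $2$ lost in the triangle inequality is absorbed by measuring $\psi$ at the smaller threshold $\varepsilon/2$ while measuring $\varphi$ at $\varepsilon$, which is precisely why two thresholds are unavoidable. I expect this threshold bookkeeping, rather than the definability, to be the main obstacle.

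Lemma \ref{lem:main} then yields $m:=\inf\varphi_\varepsilon(C)>0$. Finally I would take $\delta=m$: for any $x,x'\in C$ with $|x-x'|<m\le\varphi_\varepsilon(x)$, downward closedness gives $|f(x,p)-f(x',p)|<\varepsilon$ for all $p\in P$, which is exactly uniform equi-continuity with respect to $P$. Once Lemma \ref{lem:main} is available the compactness heart of the classical proof is already done, and the remaining verifications (definability, positivity, and finiteness of $\varphi_\theta$, together with the transfer estimate above) are routine.
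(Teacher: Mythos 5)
Your proof is correct and is essentially the paper's own argument: you define the same truncated definable modulus of equi-continuity, establish the same key transfer estimate $\varphi_\varepsilon(x') \ge \tfrac12\varphi_{\varepsilon/2}(x)$ for $x'$ near $x$, apply Lemma \ref{lem:main} to the same pair of functions $\varphi(\cdot,\varepsilon)$ and $\tfrac12\varphi(\cdot,\tfrac{\varepsilon}{2})$, and take $\delta=\inf\varphi_\varepsilon(C)>0$, exactly as the paper does. The only cosmetic flaw is your truncation at $1$, an element that need not exist in a bare ordered group; replace it, as the paper does, by an arbitrary fixed positive element $c \in M$.
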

\begin{proof}
A uniformly equi-continuous definable function is always equi-continuous.
We prove the opposite implication.

Take a positive $c \in M$.
Consider the definable function $\varphi:C \times M_{>0} \rightarrow M_{>0}$ given by 
$$ \varphi(x,\varepsilon)=\sup\{0<\delta<c\;|\;\forall p \in P, \ \forall x' \in C,\  |x-x'|< \delta \Rightarrow |f(x,p)-f(x',p)|< \varepsilon\}\text{.}$$
Since $f$ is equi-continuous with respect to $P$, we have $\varphi(x,\varepsilon)>0$ for all $x \in C$ and $\varepsilon>0$.
Fix arbitrary $x \in C$ and $\varepsilon>0$.
We also fix an arbitrary point $x' \in C$ with $|x'-x|<\frac{1}{2}\varphi(x,\frac{\varepsilon}{2})$.
We have $|f(x',p)-f(x,p)|<\frac{\varepsilon}{2}$ for all $p \in P$ by the definition of $\varphi$.

For all $y \in C$ with $|x'-y|<\frac{1}{2}\varphi(x,\frac{\varepsilon}{2})$, we have $|x-y| \leq |x-x'|+|x'-y| < \varphi(x,\frac{\varepsilon}{2})$.
We get $|f(y,p)-f(x,p)|<\frac{\varepsilon}{2}$ for all $p \in P$ by the definition of $\varphi$.
We finally obtain $|f(y,p)-f(x',p)| \leq |f(x',p)-f(x,p)|+|f(y,p)-f(x,p)|<\varepsilon$ for all $p \in P$.
It means that $\varphi(x',\varepsilon) \geq \frac{1}{2}\varphi(x,\frac{\varepsilon}{2})$ whenever $|x'-x|<\frac{1}{2}\varphi(x,\frac{\varepsilon}{2})$.
Apply Lemma \ref{lem:main} to the definable functions $\varphi(\cdot,\varepsilon)$ and $\frac{1}{2}\varphi(\cdot,\frac{\varepsilon}{2})$ for a fixed $\varepsilon>0$.
We have $\inf \varphi(C,\varepsilon)>0$.

For any $\varepsilon>0$, set $\delta=\inf \varphi(C,\varepsilon)$.
For any $p \in P$ and $x,x' \in C$, we have $|f(x,p)-f(x',p)|< \varepsilon$ whenever $|x-x'|< \delta$ by the definition of $\varphi$.
It means that $f$ is uniformly equi-continuous.
\end{proof}

It is well-known that a continuous function defined on a compact set is uniformly continuous.
The following corollary claims that a similar assertion holds true for a definable function defined on a definable, closed bounded set.
\begin{corollary}\label{cor:uniform}
Consider a definably complete locally o-minimal expansion of an ordered group $\mathcal M=(M,<,+,0,\ldots)$.
Let $C$ be a definable, closed and bounded set.
A definable continuous function $f: C \rightarrow M$ is uniformly continuous.
\end{corollary}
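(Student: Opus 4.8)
The plan is to deduce this immediately from Proposition \ref{prop:equi-cont} by regarding $f$ as a family parameterized by a trivial parameter space. Concretely, I would fix an element of $M$, say $0$, set $P=\{0\}$ (a definable singleton), and define the definable function $g:C \times P \rightarrow M$ by $g(x,0)=f(x)$. Since $P$ consists of a single point, the quantifier ``$\forall p \in P$'' occurring in the definitions of equi-continuity and uniform equi-continuity degenerates to a single instance, so these two notions collapse to ordinary continuity and ordinary uniform continuity of $f$, respectively. The whole corollary is then a translation between the single-function and the parameterized-family formulations.

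The key steps, in order, are as follows. First I would verify that $g$ is equi-continuous with respect to $P$: unwinding the defining condition with $P=\{0\}$ yields $\forall \varepsilon>0,\ \forall x \in C,\ \exists \delta>0,\ \forall x' \in C,\ |x-x'|<\delta \Rightarrow |f(x)-f(x')|<\varepsilon$, which is precisely the continuity of $f$ and hence holds by hypothesis. Second, since $C$ is closed and bounded, Proposition \ref{prop:equi-cont} applies and yields that $g$ is uniformly equi-continuous with respect to $P$. Third, I would unwind this last condition, again with $P=\{0\}$: it reads $\forall \varepsilon>0,\ \exists \delta>0,\ \forall x, x' \in C,\ |x-x'|<\delta \Rightarrow |f(x)-f(x')|<\varepsilon$, which is exactly the uniform continuity of $f$.

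Since essentially all of the analytic content is already carried out in Proposition \ref{prop:equi-cont}---which itself rests on Lemma \ref{lem:main} together with the existence of one-sided limits from Proposition \ref{prop:limit}---I do not expect any genuine obstacle here beyond the bookkeeping just described. The only point deserving explicit care is the trivial but necessary observation that a singleton is a definable set, so that $g$ is genuinely a definable function on a product of two definable sets and Proposition \ref{prop:equi-cont} is legitimately applicable; the closedness and boundedness of $C$ are inherited directly from the hypothesis.
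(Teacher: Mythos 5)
Your proposal is correct and is exactly the paper's own argument: take $P$ to be a singleton, apply Proposition \ref{prop:equi-cont} to $g(x,p)=f(x)$, and observe that equi-continuity and uniform equi-continuity degenerate to continuity and uniform continuity of $f$. The paper states this in two lines; your version merely makes the unwinding of the quantifiers explicit.
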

\begin{proof}
Let $P$ be a singleton.
Apply Proposition \ref{prop:equi-cont} to the function $g:C \times P \rightarrow M$ defined by $g(x,p)=f(x)$.
\end{proof}

We define a definable family of functions and investigate its properties.
Equi-continuity, convergence and uniform convergence are defined for sequences of functions in classical analysis.
We consider similar notions for a definable family of functions.
\begin{definition}
Consider an expansion of a densely linearly ordered abelian group $\mathcal M=(M,<,+,0,\ldots)$.
Let $C$ be a definable set and $s$ be a positive element in $M$.
A family $\{f_t:C \rightarrow M\}_{0<t<s}$ of functions with the parameter variable $t$ is a \textit{definable family of functions} if there exists a definable function $F:C \times (0,s) \rightarrow M$ such that $f_t(x)=F(x,t)$ for all $x \in C$ and $0<t<s$.
We call it a \textit{definable family of continuous functions} if every function $f_t$ is continuous.

Consider a definable family of functions $\{f_t:C \rightarrow M\}_{0<t<s}$.
Set $I=(0,s)$.
The map $F:C \times I \rightarrow M$ given by $F(x,t) =f_t(x)$ is a definable function by the definition.
The family is a \textit{definable family of equi-continuous functions} if $F$ is equi-continuous with respect to $I$.
It is a \textit{definable family of pointwise bounded functions} if $F$ is pointwise bounded with respect to $I$.

A definable family of functions $\{f_t:C \rightarrow M\}_{0<t<s}$ is \textit{pointwise convergent} if for any positive $\varepsilon>0$ and for any $x \in C$, there exists $s'>0$ such that $|f_t(x)-f_{t'}(x)|<\varepsilon$ for all $t,t' \in (0,s')$.
\end{definition}

The following lemma is proved following a typical argument in classical analysis.
\begin{lemma}\label{lem:pointwise}
Consider an expansion of a densely linearly ordered abelian group $\mathcal M=(M,<,+,0,\ldots)$.
Let $C$ be a definable set and $s$ be a positive element in $M$.
Consider a pointwise convergent definable family of functions $\{f_t:C \rightarrow M\}_{0<t<s}$.
For any $x \in C$, there exists $s'>0$ such that the set $\{f_t(x)\;|\;0<t<s'\}$ is bounded.
\end{lemma}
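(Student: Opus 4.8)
The plan is to mimic the classical fact that a Cauchy sequence is bounded. The pointwise convergence hypothesis is precisely a Cauchy-type condition on the values $f_t(x)$, so once I pin down a single ``anchor'' parameter $t_0$ near $0$, every other value of the family within a suitable range differs from $f_{t_0}(x)$ by less than a fixed positive element, and this immediately yields a bound on the whole fiber.

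Concretely, I would first fix the point $x \in C$ and choose some positive element $c \in M$, which exists because $\mathcal M$ expands a dense linear order without endpoints. Applying the definition of pointwise convergence with $\varepsilon = c$ produces a positive $s' > 0$ such that $|f_t(x) - f_{t'}(x)| < c$ for all $t, t' \in (0, s')$. Since the order is dense, the interval $(0, s')$ is nonempty, so I may fix a single parameter $t_0 \in (0, s')$.

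Then for every $t \in (0, s')$ the triangle inequality for the distance $|\cdot|$ on the abelian group $M$ gives $|f_t(x)| \leq |f_t(x) - f_{t_0}(x)| + |f_{t_0}(x)| < c + |f_{t_0}(x)|$. Hence the set $\{f_t(x)\;|\;0 < t < s'\}$ is contained in the bounded interval $(-(c + |f_{t_0}(x)|),\, c + |f_{t_0}(x)|)$, which is exactly the desired conclusion.

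I do not anticipate a genuine obstacle here, as the argument is just the standard ``Cauchy implies bounded'' estimate. The only point requiring minor care is that $\mathcal M$ is merely an ordered group with no canonical unit to serve as $\varepsilon$; this is handled by selecting an arbitrary positive $c$, exactly as in the proof of Proposition \ref{prop:equi-cont}. Note also that no definable choice is needed, since one fixes a single $t_0$ rather than a parameterized selection, and definability of the resulting bound is irrelevant because the statement concerns boundedness of one fiber rather than a uniform estimate over $x$.
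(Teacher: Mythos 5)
Your proof is correct and is essentially the same as the paper's: both apply the pointwise-convergence condition with a fixed positive $\varepsilon$ to obtain $s'$, fix one anchor parameter in $(0,s')$, and conclude by the triangle inequality $|f_t(x)| \leq |f_t(x)-f_{t_0}(x)|+|f_{t_0}(x)|$. The only difference is cosmetic (you justify the existence of a positive $\varepsilon$ explicitly; the paper simply takes one).
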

\begin{proof}
Fix $x \in C$.
Take a positive $\varepsilon >0$.
There exists $s'>0$ such that$|f_t(x)-f_{t'}(x)|<\varepsilon$ for all $t,t' \in (0,s')$.
Fix $u \in (0,s')$.
For any $t \in (0,s')$, we have $$|f_t(x)| \leq |f_u(x)|+|f_u(x)-f_t(x)| < |f_u(x)|+ \varepsilon.$$
It means that the set $\{f_t(x)\;|\;0<t<s'\}$ is bounded.
\end{proof}

We also get the following converse when $\mathcal M$ is a a definably complete locally o-minimal expansion of a densely linearly ordered abelian group.
\begin{lemma}\label{lem:pointwise2}
Consider a definably complete locally o-minimal expansion of an ordered group $\mathcal M=(M,<,+,0,\ldots)$.
Let $C$ be a definable set and $s$ be a positive element in $M$.
A definable family of pointwise bounded functions $\{f_t:C \rightarrow M\}_{0<t<s}$ is pointwise convergent.
\end{lemma}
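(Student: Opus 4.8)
The plan is to fix a point $x \in C$ and reduce the entire statement to the existence of a one-sided limit of the single-variable slice function, which is exactly what Proposition \ref{prop:limit}(2) provides. Write $F : C \times (0,s) \to M$ for the definable function with $F(x,t) = f_t(x)$, and for the fixed $x$ set $g_x : (0,s) \to M$, $g_x(t) = F(x,t)$. Since $x$ is fixed, $g_x$ is a definable function of the single variable $t$.

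First I would observe that $g_x$ is bounded. Indeed, the hypothesis that the family is pointwise bounded means precisely that $F$ is pointwise bounded with respect to $(0,s)$, so there exists $N > 0$ with $|F(x,t)| = |g_x(t)| < N$ for all $t \in (0,s)$. Hence $g_x : (0,s) \to M$ is a bounded definable map, and Proposition \ref{prop:limit}(2) applies: the limit $L := \lim_{t \to +0} g_x(t)$ exists as a point of $M$.

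Next I would extract pointwise convergence from the existence of this limit by a routine Cauchy argument. Given $\varepsilon > 0$, the defining property of $L$ yields $\delta > 0$ such that $|L - g_x(t)| < \varepsilon/2$ whenever $0 < t < \delta$. Then for all $t, t' \in (0,\delta)$ the triangle inequality gives $|g_x(t) - g_x(t')| \le |g_x(t) - L| + |L - g_x(t')| < \varepsilon$, that is, $|f_t(x) - f_{t'}(x)| < \varepsilon$. Setting $s' = \delta$ then verifies the definition of pointwise convergence at $x$, and since $x \in C$ and $\varepsilon > 0$ were arbitrary, the family is pointwise convergent.

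I do not expect any genuine obstacle here: the entire content of the lemma is already packaged in Proposition \ref{prop:limit}, and this statement is in effect its reformulation in the language of definable families of functions. The only points needing care are the bookkeeping that pointwise boundedness of $F$ yields boundedness of each fixed slice $g_x$ (so that the limit proposition is applicable), and the standard observation that convergence of $g_x(t)$ as $t \to +0$ is equivalent to the Cauchy-type condition used to define pointwise convergence.
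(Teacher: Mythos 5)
Your proof is correct and follows essentially the same route as the paper: fix $x \in C$, note that the slice $g_x(t)=f_t(x)$ is a bounded definable function, apply Proposition \ref{prop:limit}(2) to obtain the limit, and conclude pointwise convergence by the triangle inequality. No gaps; this matches the paper's argument step for step.
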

\begin{proof}
Fix $x \in C$.
Set $I=(0,s)$.
Consider the definable function $g:I \rightarrow M$ given by $g(t)=f_t(x)$.
It is bounded.
There exists a limit $y=\lim_{t \to +0}g(t)$ by Proposition \ref{prop:limit}(2).

Take an arbitrary positive $\varepsilon >0$.
There exists $s'>0$ such that$|y-g(t)|<\varepsilon/2$ for all $t\in (0,s')$.
We have $|f_t(x)-f_{t'}(x)| \leq |f_t(x)-y|+|y-f_{t'}(x)|<\varepsilon$ whenever $t, t'\in (0,s')$.
It means that the family $\{f_t:C \rightarrow M\}_{0<t<s}$ is pointwise convergent.
\end{proof}

We define the limit of a pointwise convergent definable family of functions.
\begin{definition}\label{def:lim_func}
Consider an expansion of a densely linearly ordered abelian group $\mathcal M=(M,<,+,0,\ldots)$.
Let $C$ be a definable set and $s$ be a positive element in $M$.
Consider a pointwise convergent definable family of functions $\{f_t:C \rightarrow M\}_{0<t<s}$.
For any $x \in C$, consider the function $g_x:(0,s) \rightarrow M$ given by $g_x(t)=f_t(x)$.
Taking a smaller $s>0$ if necessary, we may assume that $g_x$ is bounded by Lemma \ref{lem:pointwise}.
There exists a unique limit $\lim_{t \to +0} g_x(t)$ exists by Proposition \ref{prop:limit}(2).
The \textit{limit} $\lim_{t \to +0}f_t:C \rightarrow M$ of the family $\{f_t:C \rightarrow M\}_{0<t<s}$ is defined by $(\lim_{t \to +0}f_t)(x)=\lim_{t \to +0}g_x(t)$.
\end{definition}

\begin{definition}
Consider an expansion of a densely linearly ordered abelian group $\mathcal M=(M,<,+,0,\ldots)$.
Let $C$ be a definable set and $s$ be a positive element in $M$.
A definable family of functions $\{f_t:C \rightarrow M\}_{0<t<s}$ is \textit{uniformly convergent} if for any positive $\varepsilon>0$, there exists $s'>0$ such that $|f_t(x)-f_{t'}(x)|<\varepsilon$ for all $x \in C$ and $t,t' \in (0,s')$.
\end{definition}

The following proposition and its proof is almost the same as the counterparts in classical analysis.
\begin{proposition}\label{prop:uniform_convergent}
Consider a definably complete locally o-minimal expansion of an ordered group $\mathcal M=(M,<,+,0,\ldots)$.
Let $C$ be a definable set and $s$ be a positive element in $M$.
Consider a uniformly convergent definable family of continuous functions $\{f_t:C \rightarrow M\}_{0<t<s}$.
The limit $\lim_{t \to +0}f_t:C \rightarrow M$ is continuous.
\end{proposition}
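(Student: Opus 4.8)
The plan is to adapt the classical uniform limit theorem via the $\varepsilon/3$-argument, the only extra care being to connect the Cauchy-type definition of uniform convergence used in this paper to the actual limit function $f := \lim_{t \to +0} f_t$. First I would observe that the family is in particular pointwise convergent: fixing a single $x \in C$ in the definition of uniform convergence reproduces exactly the pointwise convergence condition. Hence the limit function $f : C \rightarrow M$ is well defined by Definition \ref{def:lim_func}, with $f(x) = \lim_{t \to +0} f_t(x)$ existing by Proposition \ref{prop:limit}(2) (after possibly shrinking $s$ using Lemma \ref{lem:pointwise}, as in that definition).

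The key preliminary step is to upgrade the Cauchy-type hypothesis into uniform closeness to $f$: for every $\varepsilon > 0$ there exists $s' > 0$ such that $|f_t(x) - f(x)| < \varepsilon$ for all $x \in C$ and all $t \in (0, s')$. To obtain this I would fix $\varepsilon > 0$, apply uniform convergence with $\varepsilon/2$ to get $s'$ with $|f_t(x) - f_{t'}(x)| < \varepsilon/2$ for all $x \in C$ and all $t, t' \in (0, s')$, then fix $x \in C$ and $t \in (0, s')$ and let $t' \to +0$. Since $f_{t'}(x) \to f(x)$ by the definition of $f$, choosing $t' \in (0, s')$ close enough to $0$ gives $|f_{t'}(x) - f(x)| < \varepsilon/2$, and the triangle inequality for the distance $|\cdot|$ yields $|f_t(x) - f(x)| \leq |f_t(x) - f_{t'}(x)| + |f_{t'}(x) - f(x)| < \varepsilon$. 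The ordered-group structure is precisely what makes $|\cdot|$ a translation-invariant distance and legitimizes these estimates.

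With this in hand the main argument becomes routine. Fix $x_0 \in C$ and $\varepsilon > 0$, apply the preliminary step with $\varepsilon/3$ to obtain $s'$, and fix any $t_0 \in (0, s')$. Since the family is a family of continuous functions, $f_{t_0}$ is continuous at $x_0$, so there is $\delta > 0$ with $|f_{t_0}(x) - f_{t_0}(x_0)| < \varepsilon/3$ whenever $x \in C$ and $|x - x_0| < \delta$. For such $x$ the triangle inequality gives $|f(x) - f(x_0)| \leq |f(x) - f_{t_0}(x)| + |f_{t_0}(x) - f_{t_0}(x_0)| + |f_{t_0}(x_0) - f(x_0)| < \varepsilon$, which proves continuity of $f$ at $x_0$; as $x_0$ is arbitrary, $f$ is continuous.

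I expect the only genuine obstacle to be the preliminary step, namely justifying the passage from the Cauchy-style uniform convergence of the family to the uniform bound $|f_t - f| < \varepsilon$ over all of $C$, since this is where the universal quantifier over $x$ must be interchanged with the limit in the auxiliary parameter $t'$; everything there rests on Proposition \ref{prop:limit}(2) guaranteeing the limit $f(x)$ pointwise. The remaining use of halving and dividing $\varepsilon$ into three parts is harmless, as it is already employed freely elsewhere in the paper and is available because a definably complete ordered group is divisible.
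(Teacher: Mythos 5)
Your proof is correct and takes essentially the same route as the paper's: the classical triangle-inequality argument combining the Cauchy-type uniform convergence with continuity of a single member $f_{t_0}$ of the family, with existence of the limit supplied by Definition \ref{def:lim_func} and Proposition \ref{prop:limit}(2). The only cosmetic difference is that you first upgrade the Cauchy hypothesis to uniform closeness to the limit and then use an $\varepsilon/3$ split, whereas the paper skips that preliminary step and performs a direct $\varepsilon/5$ split, approximating the limit pointwise at $x$ and $x'$ by $f_{t_1}$ and $f_{t_2}$ separately.
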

\begin{proof}
Fix arbitrary $\varepsilon>0$ and $x \in C$.
Since the family is uniformly convergent, we may assume that $|f_t(x')-f_{t'}(x')|<\frac{\varepsilon}{5}$ for all $x' \in C$ and $t,t' \in (0,s)$ by taking a smaller $s>0$ if necessary.
Fix $t_0$ with $0<t_0<s$.
There exists $\delta>0$ such that $|f_{t_0}(x')-f_{t_0}(x)|<\frac{\varepsilon}{5}$ whenever $|x-x'|<\delta$ because $f_{t_0}$ is continuous.
Fix an arbitrary point $x' \in C$ with $|x-x'|<\delta$.
We can take $t_1,t_2 \in (0,s)$ with $|(\lim_{t \to +0}f_t)(x)-f_{t_1}(x)|<\frac{\varepsilon}{5}$ and $|(\lim_{t \to +0}f_t)(x')-f_{t_2}(x')|<\frac{\varepsilon}{5}$ by the definition of the limit $\lim_{t \to +0}f_t$.
We finally have 
\begin{align*}
&|(\lim_{t \to +0}f_t)(x')-(\lim_{t \to +0}f_t)(x)| \\
&\leq |(\lim_{t \to +0}f_t)(x')-f_{t_2}(x')|+|f_{t_2}(x')-f_{t_0}(x')|+|f_{t_0}(x')-f_{t_0}(x)|\\
&\quad +|f_{t_0}(x)-f_{t_1}(x)|+|f_{t_1}(x)-(\lim_{t \to +0}f_t)(x)|\\
&<\varepsilon.
\end{align*}
We have proven that $\lim_{t \to +0}f_t$ is continuous.
\end{proof}

The following Arzela-Ascoli-type theorem is a main theorem of this paper.

\begin{theorem}\label{thm:ascoli}
Consider a definably complete locally o-minimal expansion of an ordered group $\mathcal M=(M,<,+,0,\ldots)$.
Let $C$ be a definable, closed and bounded set.
A pointwise convergent definable family of equi-continuous functions $\{f_t:C \rightarrow M\}_{0<t<s}$ is uniformly convergent. 
\end{theorem}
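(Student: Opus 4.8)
The plan is to transcribe the classical Arzel\`a--Ascoli argument, in which compactness of the domain converts a pointwise modulus of convergence into a uniform one. Here the role of compactness is played by Proposition~\ref{prop:equi-cont} together with Lemma~\ref{lem:main}, both of which exploit that $C$ is closed and bounded; Lemma~\ref{lem:main} is the genuine definable substitute for the extraction of a finite subcover. First I would record that, since the family is equi-continuous, the associated function $F(x,t)=f_t(x)$ on $C\times(0,s)$ is equi-continuous with respect to $I=(0,s)$, so Proposition~\ref{prop:equi-cont} upgrades this to uniform equi-continuity. Thus for every $\varepsilon>0$ there is a single $\delta>0$ with $|f_t(x)-f_t(x')|<\varepsilon$ for all $t\in I$ and all $x,x'\in C$ satisfying $|x-x'|<\delta$.

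The heart of the argument is to manufacture a definable ``local radius of uniform convergence''. Fixing a positive $c\in M$, I would define, for $x\in C$ and $\varepsilon>0$,
\[
\sigma(x,\varepsilon)=\sup\{0<s''<c\;|\;\forall t,t'\in(0,s''),\ |f_t(x)-f_{t'}(x)|<\varepsilon\}\text{.}
\]
This is a definable function of $(x,\varepsilon)$, and the set whose supremum is taken is a nonempty initial segment of $(0,c)$ by pointwise convergence, so $0<\sigma(x,\varepsilon)\le c$; in particular $\sigma(\cdot,\varepsilon)$ is a definable function $C\to M_{>0}$. By construction, whenever $0<s'<\sigma(x,\varepsilon)$ one has $|f_t(x)-f_{t'}(x)|<\varepsilon$ for all $t,t'\in(0,s')$.

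Next I would establish the local comparison needed to invoke Lemma~\ref{lem:main}. Fix $\varepsilon>0$ and let $\delta>0$ be the constant furnished by uniform equi-continuity for the threshold $\varepsilon/3$. For any $x\in C$, any $x'\in C$ with $|x-x'|<\delta$, and any $t,t'\in(0,s'')$ with $s''<\sigma(x,\varepsilon/3)$, the triangle inequality gives
\[
|f_t(x')-f_{t'}(x')|\le |f_t(x')-f_t(x)|+|f_t(x)-f_{t'}(x)|+|f_{t'}(x)-f_{t'}(x')|<\tfrac{\varepsilon}{3}+\tfrac{\varepsilon}{3}+\tfrac{\varepsilon}{3}=\varepsilon\text{,}
\]
the outer two terms being controlled by uniform equi-continuity and the middle one by the definition of $\sigma(x,\varepsilon/3)$. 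Hence $\sigma(x',\varepsilon)\ge\sigma(x,\varepsilon/3)$ for all such $x'$. This is precisely the hypothesis of Lemma~\ref{lem:main} applied to the definable functions $\varphi=\sigma(\cdot,\varepsilon)$ and $\psi=\sigma(\cdot,\varepsilon/3)$ on the closed bounded set $C$, so $\inf\sigma(C,\varepsilon)>0$. Setting $s'=\tfrac12\inf\sigma(C,\varepsilon)$, I get $s'<\sigma(x,\varepsilon)$ for every $x\in C$, whence $|f_t(x)-f_{t'}(x)|<\varepsilon$ for all $x\in C$ and all $t,t'\in(0,s')$, which is exactly uniform convergence.

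The step I expect to be the main obstacle is the threshold bookkeeping between $\sigma(\cdot,\varepsilon)$ and $\sigma(\cdot,\varepsilon/3)$: one must build the second parameter $\varepsilon$ into the modulus function so that the two functions fed into Lemma~\ref{lem:main} carry the matching $\varepsilon$-gap produced by the three-term triangle inequality, exactly as in the proof of Proposition~\ref{prop:equi-cont}. Everything else is routine, provided one checks that the supremum defining $\sigma$ lands in $M_{>0}$ (finiteness via the cap $c$, positivity via pointwise convergence) so that Lemma~\ref{lem:main} genuinely applies.
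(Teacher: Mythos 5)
Your proof is correct and is essentially the paper's own argument: the same definable modulus function (your $\sigma$ is the paper's $\varphi$), positivity from pointwise convergence, the same comparison $\sigma(x',\varepsilon)\geq\sigma(x,\varepsilon/3)$ via the three-term triangle inequality, and the same appeal to Lemma~\ref{lem:main} to get $\inf\sigma(C,\varepsilon)>0$. The only deviation is your preliminary upgrade to uniform equi-continuity via Proposition~\ref{prop:equi-cont}, which is harmless but unnecessary: the hypothesis of Lemma~\ref{lem:main} is pointwise in $x$, so the $\delta$ furnished by plain equi-continuity (depending on $x$) already suffices, and this is what the paper uses.
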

\begin{proof}
Set $I=(0,s)$.
Consider the map $F:C \times I \rightarrow M$ given by $F(x,t) =f_t(x)$.
It is an equi-continuous definable function with respect to $I$ by the definition.
Set $g=\lim_{t \to +0} f_t$.
It is well-defined by Definition \ref{def:lim_func} because the family is pointwise convergent.

Take $c>0$.
Consider the definable function $\varphi:C \times M_{>0} \rightarrow M_{>0}$ given by
$$\varphi(x,\varepsilon)=\sup\{0<\delta<c\;|\; \forall t,t' \in (0,\delta),\ |F(x,t)-F(x,t')|<\varepsilon\}\text{.}$$
 We first show that it is well-defined.
 Fix $x \in C$ and $\varepsilon>0$.
 There exists $\delta>0$ such that $|F(x,u)-g(x)|<\frac{\varepsilon}{2}$ for all $u \in (0,\delta)$ by the definition of $g$.
For any $t,t' \in  (0,\delta)$, we have $|F(x,t)-F(x,t')| \leq |F(x,t)-g(x)|+|g(x)-F(x,t')|<\varepsilon$.
The definable set $\{0<\delta<c\;|\; \forall t,t' \in (0,\delta),\ |F(x,t)-F(x,t')|<\varepsilon\}$ is not empty and the function $\varphi$ is well-defined.

We fix $x \in C$ and $\varepsilon>0$ again.
Since $F$ is equi-continuous with respect to $I$, there exists $\delta'>0$ such that 
$$\forall t \in (0,s),\ \forall x' \in C, \ |x-x'|< \delta' \Rightarrow |F(x,t)-F(x',t)|<\frac{\varepsilon}{3}\text{.}$$
Fix an arbitrary $x' \in C$ with $|x-x'|< \delta'$.
For any $t,t' \in (0,\varphi(x, \frac{\varepsilon}{3}))$, we have $|F(x,t)-F(x,t')|<\frac{\varepsilon}{3}$ by the definition of $\varphi$.
We finally get $$|F(x',t)-F(x',t')| \leq |F(x',t)-F(x,t)|+|F(x,t)-F(x,t')|+|F(x,t')-F(x',t')|<\varepsilon$$
whenever $t,t' \in (0,\varphi(x, \frac{\varepsilon}{3}))$.
It means that $\varphi(x',\varepsilon) \geq \varphi(x,\frac{\varepsilon}{3})$.
Apply Lemma \ref{lem:main} to the definable functions $\varphi(\cdot,\varepsilon)$ and $\varphi(\cdot,\frac{\varepsilon}{3})$ for a fixed $\varepsilon>0$.
We have $\inf \varphi(C,\varepsilon)>0$ for all $\varepsilon>0$.

Fix $\varepsilon>0$.
Set $\delta= \inf \varphi(C,\varepsilon)>0$.
We have $|f_t(x)-f_{t'}(x)|=|F(t,x)-F(t',x)|<\varepsilon$ for all $x \in C$ and $t,t' \in (0,\delta)$.
It means that the family $\{f_t:C \rightarrow M\}_{0<t<s}$ is uniformly convergent. 
\end{proof}

The above theorem together with the curve selection lemma yields the following corollary:
\begin{corollary}\label{cor:ascoli}
Consider a definably complete locally o-minimal expansion of an ordered group $\mathcal M=(M,<,+,0,\ldots)$.
Let $C$ and $P$ be definable sets.
Assume that $C$ is closed and bounded.
Let $f:C \times P \rightarrow M$ be a definable function which is equi-continuous and pointwise bounded with respect to $P$.
Take $p \in \mycl(P)$.
There exists a definable continuous curve $\gamma:(0,\varepsilon) \rightarrow P$ such that $\lim_{t \to +0}\gamma(t)=p$ and the definable family of functions $\{g_t:C \rightarrow M\}_{0<t<\varepsilon}$ defined by $g_t(x)=f(x,\gamma(t))$ is uniformly convergent.
\end{corollary}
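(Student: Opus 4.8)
The plan is to pull the function $f$ back along a curve approaching $p$ and then reduce to the Arzela-Ascoli-type theorem (Theorem \ref{thm:ascoli}). First I would produce the curve $\gamma$. If $p \in P$, I would simply take the constant curve $\gamma(t)=p$; in that case the family $g_t(x)=f(x,p)$ is literally independent of $t$, so uniform convergence is immediate. Otherwise $p \in \mycl(P) \setminus P$, so $P$ is not closed, and Corollary \ref{cor:curve_selection} furnishes a small $\varepsilon>0$ together with a definable continuous map $\gamma:(0,\varepsilon) \rightarrow P$ satisfying $\lim_{t \to +0}\gamma(t)=p$. In either case the decisive feature is that $\gamma$ takes values in $P$, which is exactly what lets the defining properties of $f$ transfer to the pulled-back family.

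Next I would introduce the definable family $\{g_t:C \rightarrow M\}_{0<t<\varepsilon}$ by $g_t(x)=f(x,\gamma(t))$, with associated function $G(x,t)=f(x,\gamma(t))$, which is definable because $f$ and $\gamma$ are. The heart of the argument is verifying that this family satisfies the two hypotheses of Theorem \ref{thm:ascoli}. For equi-continuity with respect to $I=(0,\varepsilon)$: given $\varepsilon'>0$ and $x \in C$, the equi-continuity of $f$ with respect to $P$ supplies a $\delta>0$ such that $|f(x,q)-f(x',q)|<\varepsilon'$ for all $q \in P$ and all $x' \in C$ with $|x-x'|<\delta$. Specializing to $q=\gamma(t)$, which is permissible precisely because $\gamma(t) \in P$, yields $|G(x,t)-G(x',t)|<\varepsilon'$ for every $t \in I$, which is the equi-continuity of $G$ with respect to $I$.

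For pointwise convergence I would first observe that the family is pointwise bounded: fixing $x \in C$, the pointwise boundedness of $f$ with respect to $P$ gives $N>0$ with $|f(x,q)|<N$ for all $q \in P$, whence $|G(x,t)|=|f(x,\gamma(t))|<N$ for all $t \in I$. Lemma \ref{lem:pointwise2} then upgrades pointwise boundedness to pointwise convergence of the family. Having checked that $\{g_t\}$ is a pointwise convergent definable family of equi-continuous functions on the closed and bounded set $C$, I would invoke Theorem \ref{thm:ascoli} to conclude that $\{g_t\}$ is uniformly convergent, which is the assertion of the corollary.

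I do not expect a serious obstacle here: the argument is essentially a bookkeeping exercise, transporting the defining conditions on $f$ through the substitution $q=\gamma(t)$ and then quoting the two earlier results. The only place requiring mild care is the production of $\gamma$, since Corollary \ref{cor:curve_selection} applies only when $p \notin P$; the constant curve handles the complementary case and, as noted, trivializes the conclusion there.
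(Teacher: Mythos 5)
Your proposal is correct and takes exactly the paper's route: the paper's proof is a one-line citation of Corollary \ref{cor:curve_selection}, Lemma \ref{lem:pointwise2} and Theorem \ref{thm:ascoli}, and your argument simply fills in those details (curve selection to get $\gamma$, transfer of equi-continuity and pointwise boundedness through the substitution $q=\gamma(t)$, then the Arzela-Ascoli-type theorem), including the trivial constant-curve case when $p \in P$.
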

\begin{proof}
The corollary follows from Corollary \ref{cor:curve_selection}, Lemma \ref{lem:pointwise2} and Theorem \ref{thm:ascoli}.
\end{proof}

Consider a parameterized function $f:C \times P \rightarrow M$ which is equi-continuous with respect to $P$.
We show that the projection image of the set at which $f$ is discontinuous onto the parameter space $P$ is of dimension smaller than $\dim P$ when $C$ is closed.

\begin{theorem}\label{thm:discont}
	Consider a definably complete expansion of an ordered group $\mathcal M=(M,<,+,0,\ldots)$.
	Let $C$ be a definable closed set and $P$ be a definable set.
	Let $\pi:C \times P \rightarrow P$ be the projection.
	Consider a definable function $f:C \times P \rightarrow M$ which is equi-continuous with respect to $P$.
	Set $D= \{(x,q) \in C \times P\;|\; f \text{ is discontinuous at }(x,q)\}$.
	We have $\dim \pi(D) < \dim P$.
\end{theorem}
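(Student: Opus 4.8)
\section*{Proof proposal}

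The plan is to pass from the two–variable function to its one–variable sections, to measure discontinuity by the size of the oscillation, to bound the projection of each fixed oscillation level, and finally to add up the levels with the definable Baire property.

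For $x\in C$ let $g_x\colon P\to M$ be the section $g_x(q)=f(x,q)$, and for a definable $h\colon P\to M$ and $q\in P$ let $\operatorname{osc}(h,q)=\inf_{\delta>0}\sup\{|h(s)-h(s')| : s,s'\in\mathcal{B}_n(q,\delta)\cap P\}$ be the oscillation of $h$ at $q$, a nonnegative element of $M\cup\{+\infty\}$ that exists by definable completeness and depends definably on $(x,q)$; thus $h$ is continuous at $q$ exactly when $\operatorname{osc}(h,q)=0$. First I would record, using equi-continuity, that $f$ is continuous at $(x,q)$ if and only if $g_x$ is continuous at $q$: the forward direction follows by fixing the first coordinate, and for the converse one writes $|f(x',q')-f(x,q)|\le|f(x',q')-f(x,q')|+|f(x,q')-f(x,q)|$ and bounds the first summand by equi-continuity at $x$ and the second by continuity of $g_x$. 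Hence $D=\{(x,q)\in C\times P : \operatorname{osc}(g_x,q)>0\}$, and, putting $D_\eta=\{(x,q)\in C\times P : \operatorname{osc}(g_x,q)\ge\eta\}$ for $\eta>0$, we obtain that $\pi(D)=\bigcup_{\eta>0}\pi(D_\eta)$ is the union of a definable family that increases as $\eta$ decreases to $0$.

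Write $k=\dim P$. The core step is to prove that $\dim\pi(D_\eta)<k$ for every fixed $\eta>0$. Suppose not; then $\dim\pi(D_\eta)=k$, and Lemma \ref{lem:proj_dim} applied to $D_\eta\subseteq C\times P$ produces a point $(c,p)\in D_\eta$ such that $\dim\pi(D_\eta\cap W)=k$ for every open box $W$ containing $(c,p)$. Now I invoke equi-continuity at the single point $c$ with tolerance $\eta/4$: there is $\delta>0$ with $|g_x(s)-g_c(s)|<\eta/4$ for all $s\in P$ whenever $x\in C$ and $|x-c|<\delta$. The decisive observation is that uniform closeness transfers oscillation: if $|g_x(s)-g_c(s)|<\eta/4$ for all $s$, then for every neighbourhood $U$ of a point $q$ in $P$ one has $\sup_U g_c\ge\sup_U g_x-\eta/4$ and $\inf_U g_c\le\inf_U g_x+\eta/4$, so that $\operatorname{osc}(g_c,q)\ge\operatorname{osc}(g_x,q)-\eta/2$. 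Consequently, for every $x\in C$ with $|x-c|<\delta$,
\[
\{q\in P : \operatorname{osc}(g_x,q)\ge\eta\}\subseteq\{q\in P : \operatorname{osc}(g_c,q)\ge\eta/2\}\subseteq\mathcal{D}(g_c).
\]
Choosing $W=(\mathcal{B}_m(c,\delta)\cap C)\times B$ for an arbitrary open box $B\ni p$, this gives $\pi(D_\eta\cap W)\subseteq\mathcal{D}(g_c)$, whence $\dim\pi(D_\eta\cap W)\le\dim\mathcal{D}(g_c)<\dim P=k$ by Proposition \ref{prop:dim}(7) applied to the definable function $g_c$. This contradicts $\dim\pi(D_\eta\cap W)=k$, so $\dim\pi(D_\eta)<k$ for all $\eta>0$.

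Finally, since $\{\pi(D_\eta)\}_{\eta>0}$ is a definable family increasing to $\pi(D)$ as $\eta\downarrow 0$, Proposition \ref{prop:baire2} (whose conclusion holds for families monotone in the parameter) yields $\dim\pi(D)=\sup_{\eta>0}\dim\pi(D_\eta)$; as every term is at most $k-1$ and the dimension is integer-valued, $\dim\pi(D)\le k-1<\dim P$, as claimed. I expect the middle paragraph to be the main obstacle, because equi-continuity restricts the variation of $f$ only in the $C$-direction and says nothing in the $P$-direction, so one cannot compare the continuity points of $g_x$ and $g_c$ directly. The resolution is to fix the jump size $\eta$ before comparing: the uniform estimate $|g_x-g_c|<\eta/4$ then forces every level-$\eta$ discontinuity of every nearby section $g_x$ to lie at a genuine discontinuity of the single section $g_c$, whose discontinuity set already has dimension smaller than $\dim P$.
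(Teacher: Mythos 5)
Your middle paragraph, the part you flagged as the likely obstacle, is in fact correct, and it is a genuinely different (and in some ways cleaner) route than the paper's. The paper introduces the auxiliary set $S$ of points of $D$ whose fibers fill $C$ locally, rules it out via definable choice, Corollary \ref{cor:point} and Proposition \ref{prop:dim}(7) applied on the product, and then treats $T=D\setminus\pi^{-1}(\pi(S))$ with a three-term triangle inequality that needs \emph{uniform} equi-continuity, hence Proposition \ref{prop:equi-cont} and the hypotheses that $C$ be closed and bounded, followed by an exhaustion $C\langle r\rangle=[-r,r]^m\cap C$ and Proposition \ref{prop:baire2} to remove boundedness. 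Your section-wise reformulation ($f$ is continuous at $(x,q)$ if and only if $g_x$ is continuous at $q$, valid under pointwise equi-continuity), the levels $D_\eta$, the transfer inequality $\operatorname{osc}(g_c,q)\ge\operatorname{osc}(g_x,q)-\eta/2$, and the clash between Lemma \ref{lem:proj_dim} and Proposition \ref{prop:dim}(7) applied to the single section $g_c$ are all sound; notably this part uses neither closedness nor boundedness of $C$, and only needs equi-continuity at the one point $c$. (Minor point: the $W$ you feed into Lemma \ref{lem:proj_dim} must be an open box, so take $W=\mathcal B_m(c,\delta)\times B$; since $D_\eta\subseteq C\times P$ this changes nothing.)

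The genuine gap is your last step. Proposition \ref{prop:baire2} is stated, and proved, for families $\{X\langle r\rangle\}_{r>0}$ that increase as $r\to\infty$; your family $\{\pi(D_\eta)\}$ increases as $\eta\to 0^{+}$, and the parenthetical claim that the proposition ``holds for families monotone in the parameter'' is not a formal consequence of it. To re-index you would need a definable decreasing map $\theta:(0,\infty)\to(0,\infty)$ with $\inf\theta=0$; in an ordered field $\theta(r)=1/r$ works, but in the ordered-group setting of this paper such a map need not exist. Indeed, in a structure with a definable closed, discrete, unbounded set $E\subseteq M_{>0}$ (case (A) of Section \ref{sec:decomposition}), the image $\theta(E)$ would have dimension zero, hence be discrete and closed by Proposition \ref{prop:dim}(1),(6), yet have infimum $0$ not attained, so $0$ would lie in $\mycl(\theta(E))\setminus\theta(E)$ --- impossible for a closed set. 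This directionality issue is precisely why the paper phrases its own final step with the honestly increasing family $\{\pi(D\langle r\rangle)\}_{r>0}$.

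The gap is fixable with tools already in the paper, so your architecture survives. Assume $\dim\pi(D)=k=\dim P$, fix $c_0>0$, and define $h:\pi(D)\to M$ by $h(q)=\sup\{\eta\in(0,c_0]\;|\;q\in\pi(D_\eta)\}$; this is definable, and positive because $\{\eta\;|\;q\in\pi(D_\eta)\}$ is a nonempty initial segment of $(0,\infty)$. By Proposition \ref{prop:dim}(5),(7) the set $G$ of points of $\pi(D)$ at which $h$ is continuous has dimension $k$, and by Proposition \ref{prop:dim}(10) there is $q_0\in G$ with $\dim(G\cap U)=k$ for every open box $U$ containing $q_0$. Continuity of $h$ at $q_0$ gives a box $U_0\ni q_0$ and $\lambda$ with $0<\lambda<h(q_0)$ such that $h>\lambda$ on $\pi(D)\cap U_0$; the initial-segment property then yields $\pi(D)\cap U_0\subseteq\pi(D_\lambda)$, whence $k=\dim(G\cap U_0)\le\dim\pi(D_\lambda)\le k-1$, a contradiction. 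Substituting this argument for the appeal to Proposition \ref{prop:baire2} completes your proof.
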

\begin{proof}
	Let $C$ and $P$ be definable subsets of $M^m$ and $M^n$, respectively.
	
	We first consider the case in which $C$ is bounded.
	Consider the set
	$$
	S=\{(x,p) \in D\;|\; \exists U \subset M^m:\text{open box with }x \in U \text{ and }C \cap U = D_p \cap U\}\text{,}$$
	where the notation $D_p$ denotes the fiber $\{x \in C\;|\; (x,p) \in D\}$.
	We demonstrate that $\dim \pi(S) < \dim P$.
	
	Assume for contradiction that $\dim \pi(S)=\dim P$.
	We can take an open box $B$ in $M^m$ such that $\dim \pi(S) \cap B=\dim P$ and $\pi(S) \cap B=P \cap B$ by Corollary \ref{cor:point}.
	Using Lemma \ref{lem:definable_choice}, we can construct a definable map $\tau:P \cap B \rightarrow S$ such that the composition $\pi \circ \tau$ is the identity map on $P \cap B$.
	The set of points at which $\tau$ is discontinuous is of dimension smaller than $\dim P$ by Proposition \ref{prop:dim}(7).
	By shrinking $B$ if necessary, we may assume that $\tau$ is continuous by  Proposition \ref{prop:dim}(5) and Corollary \ref{cor:point}.
	Fix a positive $K>0$.
	Consider the definable map $\varphi:P \cap B \rightarrow M$ given by $$\varphi(x)=\sup\{0<\lambda<K\;|\; C \cap \mathcal B_m(\tau(p),\lambda) = D_p \cap \mathcal B_m(\tau(p),\lambda)\}.$$
	We may assume that $\varphi$ is continuous in the same manner as above.
	Set $$W=\bigcup_{p \in P \cap B} (\mathcal B_m(\tau(p),\varphi(p)) \cap C)\times \{p\}.$$
	It is obvious that $W$ is an open subset of $C \times P$ and $W \subseteq D$.
	Since $W$ is open in $C \times P$ and $W \subseteq D$, the restriction of $f$ to $W$ is discontinuous everywhere.
	It contradicts Proposition \ref{prop:dim}(7).
	We have demonstrated the inequality $\dim\pi(S)<\dim P$.
	
	We next demonstrate that $\dim \pi(D)<\dim P$.
	We lead to a contradiction assuming the contrary.
	Set $T=D \setminus \pi^{-1}(\pi(S))$.
	We have $\dim \pi(T)= \dim P$ by Proposition \ref{prop:dim}(5) because $\dim\pi(S)<\dim P$.
	There exists a point $(c,p) \in T$ such that $\dim \pi(T \cap W)=\dim P$ for all open box $W$ in $M^{m+n}$ containing the point $(c,p)$ by Lemma \ref{lem:proj_dim}.
	Fix an arbitrary $\varepsilon>0$.
	Since $f$ is uniformly equi-continuous with respect to $P$ by the assumption and Proposition \ref{prop:equi-cont}, there exists $\delta>0$ satisfying the following condition:
	\begin{equation}\label{eq:equi-cont1}
	\forall q \in P, \ \forall x,x' \in C,\ |x-x'|<\delta \Rightarrow |f(x,q)-f(x',q)|<\varepsilon/3\text{.}
	\end{equation}
	On the other hand, we have $D_p \cap U \subsetneq C \cap U$ for any open box $U$ containing the point $c$ by the definition of the set $S$ because $(c,p) \notin S$.
	In particular, there exists $c_0 \in C$ such that $|c-c_0|<\delta/2$ and $(c_0,p) \not\in D$.
	It implies that the function $f$ is continuous at the point $(c_0,p)$.
	There exists $\delta'>0$ such that 
	\begin{equation}\label{eq:equi-cont2}
	\forall q \in P,\ |q-p|<\delta' \Rightarrow |f(c_0,q)-f(c_0,p)|<\varepsilon/3.
	\end{equation}
	
	Consider an arbitrary point $(c',p') \in C \times P$ with $|c-c'|<\delta/2$ and $|p-p'|<\delta'$.
	We have $|f(c_0,p)-f(c,p)|<\varepsilon/3$ by the inequality (\ref{eq:equi-cont1}) because $|c-c_0|<\delta/2$.
	We also have $|f(c',p')-f(c_0,p')|<\varepsilon/3$ by (\ref{eq:equi-cont1}) because $|c'-c_0| \leq |c'-c|+|c-c_0|<\delta$.
	We get 
	\begin{align*}
	|f(c',p')-f(c,p)| &\leq |f(c',p')-f(c_0,p')|+|f(c_0,p')-f(c_0,p)|+|f(c_0,p)-f(c,p)|\\
	&<\varepsilon
	\end{align*}
	by the above inequalities together with the inequality (\ref{eq:equi-cont2}).
	We have demonstrated that $f$ is continuous at $(c,p)$.
	It is a contradiction to the condition that $(c,p) \in T \subset D$.
	We have demonstrated the theorem when $C$ is bounded.
	
	We next treat the general case.
	The definable closed set $C$ is not necessarily bounded.
	For any $r>0$, we set $B\langle r \rangle=[-r,r]^n$ and $C\langle r \rangle = B \langle r \rangle \cap C$.
	We also set $D\langle r \rangle= \{(x,q) \in C\langle r \rangle \times P\;|\; f|_{C\langle r \rangle \times P} \text{ is discontinuous at }(x,q)\}$ for all $r>0$.
	Here, the notation $f|_{C\langle r \rangle \times P}$ denote the restriction of $f$ to $C\langle r \rangle \times P$.
	We obviously have $D=\bigcup_{r>0}D\langle r \rangle$ and $\pi(D)=\bigcup_{r>0}\pi(D\langle r \rangle)$.
	The family $\{\pi(D\langle r \rangle)\}_{r>0}$ is a definable increasing family.
	Since the theorem holds true when $C$ is bounded, we have $\dim \pi(D\langle r \rangle)<\dim P$ for all $r>0$.
	We get $\dim \pi(D)<\dim P$ by Proposition \ref{prop:baire2}.
\end{proof}

\section{Decomposition into special submanifolds}\label{sec:decomposition}
\subsection{Definition of quasi-special/special submanifolds}
We first introduce our definition of special manifolds.
\begin{definition}\label{def:ours}
Consider an expansion of a dense linear order without endpoints $\mathcal M=(M,<,\ldots)$.
Let $\pi:M^n \rightarrow M^d$ be a coordinate projection, where $n$ is a positive integer and $d$ is a non-negative integer with $d \leq n$.
We consider that $M^0$ is a singleton equipped with the trivial topology and the projection $\pi:M^n \rightarrow M^0$ is the trivial map when $d=0$.
Let $\tau$ be the unique permutation of $\{1,\ldots, n\}$ such that 
\begin{enumerate}
\item[(a)] $\tau(i)<\tau(j)$ for $1 \leq i < j \leq n$ when $\tau(i)>d$ and $\tau(j)>d$.
\item[(b)] The composition $\pi \circ \overline{\tau}$ is the projection onto the first $d$ coordinates, where $\overline{\tau}:M^n \rightarrow M^n$ is the map defined by $\overline{\tau}(x_1,\ldots, x_n)=(x_{\tau(1)},\ldots, x_{\tau(n)})$.
\end{enumerate}
Set $$\myfib(X,\pi,x)=\{y \in M^{n-d}\;|\; (x,y) \in \overline{\tau}^{-1}(X)\}$$ for $x \in \pi(X)$.
Note that $\myfib(X,\pi,x)=\{y \in M^{n-d}\;|\; (x,y) \in X\}$ when $\pi$ is the projection onto the first $d$ coordinates.

When $\pi$ is the coordinate projection onto the first $d$ coordinate,
a definable subset $X$ of $M^n$ is a \textit{$\pi$-special submanifold} if, for any $x \in M^d$, there exist an open box $U$ in $M^d$ containing the point $x$ and a family $\{V_y\}_{y \in \myfib(X,\pi,x)}$ of mutually disjoint open boxes in $M^n$ indexed by the set $\myfib(X,\pi,x)$ such that 
\begin{enumerate}
\item[(1)] $\pi(V_y)=U$ for all $y \in \myfib(X,\pi,x)$;
\item[(2)] $X \cap \pi^{-1}(U)$ is contained in $\bigcup_{y \in \myfib(X,\pi,x)} V_y$, and 
\item[(3)] $V_y \cap X$ is the graph of a continuous map defined on $U$ for each $y \in \myfib(X,\pi,x)$.
\end{enumerate}
We do not require that the union $\bigcup_{y \in \myfib(X,\pi,x)} V_y$ is definable.

When $\pi$ is not the coordinate projection onto the first $d$ coordinate, 
we say that a definable subset $X$ of $M^n$ is \textit{$\pi$-special submanifold} if $\overline{\tau}^{-1}(X)$ is $\pi \circ \overline{\tau}$-special submanifold.
We omit the prefix $\pi$ when it is clear from the context.

Note that a discrete, closed definable subset of $M^n$ is always a $\pi$-special submanifold, where $\pi:M^n \rightarrow M^0$ is the trivial map.
\end{definition}

We also need the following definition:
\begin{definition}\label{def:normal}
Consider an expansion of a dense linear order without endpoints $\mathcal M=(M,<,\ldots)$.
Let $\pi:M^n \rightarrow M^d$ be a coordinate projection. 
Let $X$ be a definable subset of $M^n$.
Let $\tau$ be the permutation of $\{1,\ldots,n\}$ satisfying the conditions in Definition \ref{def:ours}.

A point $(a,b) \in M^n$ is \textit{$(X,\pi)$-normal} if there exist a definable neighborhood $A$ of $a$ in $M^d$ and a definable neighborhood $B$ of $b$ in $M^{n-d}$ such that either $A \times B$ is disjoint from $\overline{\tau}^{-1}(X)$ or $(A \times B) \cap \overline{\tau}^{-1}(X)$ is the graph of a definable continuous map $f:A \rightarrow B$.
\end{definition}

We get the following:
\begin{lemma}\label{lem:equal_size}
Let $\mathcal M=(M,<,0,+,\dots)$ be a definably complete locally o-minimal expansion of an ordered group.
Let $\pi:M^n \rightarrow M^d$ denote the projection onto the first $d$ coordinates.
Let $X$ be a definable subset of $M^n$.
Assume that any point in $X$ is $(X,\pi)$-normal.
Then, for any $x \in \pi(X)$ and sufficiently small positive $\varepsilon \in M$, there exists an open box $U$ in $M^d$ containing the point $x$ such that the intersection $X \cap (U \times \mathcal B_{n-d}(y,\varepsilon))$ is the graph of a continuous map for each $y \in \myfib(X,\pi,x)$.

In particular, when $X$ is a $\pi$-special submanifold, 
for any $x \in \pi(X)$ and sufficiently small positive $\varepsilon \in M$, there exists an open box $U$ in $M^d$ containing the point $x$ such that the pair $(U,\{U \times \mathcal B_{n-d}(y,\varepsilon)\}_{y \in \myfib(X,\pi,x)})$ satisfies the conditions (1) through (3) in Definition \ref{def:ours}.
\end{lemma}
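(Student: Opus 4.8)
The plan is to reduce everything to the behaviour of the single fibre $F:=\myfib(X,\pi,x)$ over the fixed point $x$, and then to manufacture the required uniform radius by an application of Lemma \ref{lem:main}. First I would unwind the normality hypothesis in these coordinates: since $\pi$ is the projection onto the first $d$ coordinates we have $\overline{\tau}^{-1}(X)=X$, so $(X,\pi)$-normality at a point $(x,y)\in X$ supplies open boxes $A\ni x$ in $M^d$ and $B\ni y$ in $M^{n-d}$ with $(A\times B)\cap X$ equal to the graph of a definable continuous map $f_y:A\to B$. Slicing at the first coordinate $x$ shows that $(\{x\}\times B)\cap X=\{(x,y)\}$, so $y$ is isolated in $F$; hence $F$ is discrete, and therefore closed by Proposition \ref{prop:dim}(1).

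Next, for a positive $\varepsilon$ I would attach to each $y\in F$ the definable radius
\[
\rho_\varepsilon(y)=\sup\{0<\lambda<c \;|\; X\cap(\mathcal B_d(x,\lambda)\times \mathcal B_{n-d}(y,\varepsilon)) \text{ is the graph of a continuous map on } \mathcal B_d(x,\lambda)\},
\]
for a fixed positive $c$. The normality box $A\times B=A_y\times B_y$ at $(x,y)$ shows that this set of admissible $\lambda$ is nonempty, \emph{provided} $\varepsilon$ is small enough that $\mathcal B_{n-d}(y,\varepsilon)\subseteq B_y$, that $f_y(x')\in\mathcal B_{n-d}(y,\varepsilon)$ for $x'$ near $x$ (which holds by continuity of $f_y$ together with $f_y(x)=y$), and that $\mathcal B_{n-d}(y,\varepsilon)$ isolates $y$ inside $F$ (discreteness). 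Under these provisos $\rho_\varepsilon(y)>0$ for every $y$, because on a sufficiently small base box the only points of $X$ lying in $\mathcal B_d(x,\lambda)\times\mathcal B_{n-d}(y,\varepsilon)$ are those of the single graph of $f_y$.

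The crux is then to upgrade ``$\rho_\varepsilon(y)>0$ for each $y$'' to a positive lower bound $\inf_{y\in F}\rho_\varepsilon(y)>0$ that is uniform over the whole fibre; the box $U:=\mathcal B_d(x,\inf_{F}\rho_\varepsilon)$ then works simultaneously for all $y$ and gives the first assertion, while the ``in particular'' clause is immediate because $U\times\mathcal B_{n-d}(y,\varepsilon)$ realises conditions (1)--(3) of Definition \ref{def:ours}. This uniformity is exactly what Lemma \ref{lem:main} is designed to produce: taking $C=F$, $\varphi=\rho_\varepsilon$ and $\psi=\rho_\varepsilon$, the lower-semicontinuity hypothesis of that lemma is automatic, since $F$ is discrete---for each $y$ a small ball meets $F$ only in $y$, so the only $y'\in F$ with $|y'-y|<\delta$ is $y$ itself, whence $\varphi(y')=\varphi(y)\ge\psi(y)$. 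The main obstacle, and the step that must be executed carefully, is precisely this coordination of a single $\varepsilon$ with a single $U$ across all sheets: one must choose $\varepsilon$ below the fibre gap and inside every $B_y$, so that no ball $\mathcal B_{n-d}(y,\varepsilon)$ swallows a neighbouring sheet and destroys the graph property. Since Lemma \ref{lem:main} requires $C$ to be closed \emph{and bounded}, the final point to address is the boundedness of $F$ (equivalently, truncating $F$ by large closed boxes and passing to the limit via Proposition \ref{prop:baire2}, in the spirit of the unbounded case of Theorem \ref{thm:discont}); once $F$ is controlled in this way, the two relevant infima---of the admissible $\varepsilon$'s and of $\rho_\varepsilon$---are positive and the argument closes.
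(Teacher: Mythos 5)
Your reduction to the fibre $F=\myfib(X,\pi,x)$, the observation that $F$ is discrete and hence closed, and the introduction of the definable radius function $\rho_\varepsilon$ all match the paper's proof. But the uniformization step --- the crux, as you correctly call it --- rests on Lemma \ref{lem:main}, and this is where the argument breaks. Lemma \ref{lem:main} requires its domain $C$ to be closed \emph{and bounded}, and the fibre $F$ need not be bounded: take $X=M^d\times D$ with $D$ a definable, discrete, closed, unbounded subset of $M^{n-d}$ (such $D$ exists exactly in case (A) of Section 4, which is precisely where this lemma gets used, via Lemma \ref{lem:case1}). Your proposed repair --- truncate $F$ by large boxes and ``pass to the limit via Proposition \ref{prop:baire2}'' --- cannot close this gap: that proposition asserts $\dim\bigl(\bigcup_{r>0}X\langle r\rangle\bigr)=\sup_r\dim X\langle r\rangle$, a statement about dimension, and it gives no control whatsoever on infima. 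Truncation yields $\inf_{F\cap[-r,r]^{n-d}}\rho_\varepsilon>0$ for each $r$, but these infima form a decreasing family whose limit could a priori be $0$; nothing in your argument rules that out. The same defect infects your choice of a single admissible $\varepsilon$ across all sheets, since that too is an infimum over the possibly unbounded set $F$.

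The mechanism the paper uses instead needs no boundedness at all and is worth internalizing: since $F$ is discrete, it has dimension zero, so by Proposition \ref{prop:dim}(6) the image $\rho_\varepsilon(F)$ (and likewise the image of the function $y\mapsto\sup\{\varepsilon'\;|\;\varepsilon' \text{ admissible at } y\}$, which handles the choice of $\varepsilon$) has dimension zero, and by Proposition \ref{prop:dim}(1) it is therefore discrete \emph{and closed} in $M$. A closed definable subset of $M_{>0}$ that is bounded below contains its infimum by definable completeness, so that infimum is positive. This is exactly the point your heuristic counterexample ($\varphi(n)=1/n$ on $\mathbb N$) illuminates: such a function is not definably possible here, because its image would be discrete but not closed, contradicting Proposition \ref{prop:dim}(1). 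Replacing your appeal to Lemma \ref{lem:main} and Proposition \ref{prop:baire2} by this discrete-image argument (applied once to get a uniform $\varepsilon$, then again, with $\varepsilon$ fixed, to get a uniform $\delta$) repairs the proof and recovers the paper's argument.
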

\begin{proof}
We fix $x \in \pi(X)$.
Set $D=\myfib(X,\pi,x)$ for simplicity.
Fix a positive element $c \in M$.
We temporarily fix $y \in D$.
Because the point $(x,y)$ is $(X,\pi)$-normal, the set $(\mathcal B_d(x,\delta) \times \mathcal B_{n-d}(y,\varepsilon')) \cap X$ is the graph of a definable continuous map defined on $\mathcal B_d(x,\delta)$ when $\delta$  and $\varepsilon'$ are sufficiently small positive elements.
Consider the function $\sigma:D \rightarrow M$ given by 
\begin{align*}
\sigma(y)&=\sup\{0<\varepsilon'<c\;|\;\exists \delta>0\ \ (\mathcal B_d(x,\delta) \times \mathcal B_{n-d}(y,\varepsilon')) \cap X \text{ is }\\
&\qquad \text{the graph of a continuous map}\}.
\end{align*}
It is definable and the image $\sigma(D)$ is discrete and closed by Proposition \ref{prop:dim}(1),(6).
Set $\varepsilon_0=\inf \sigma(D)$.

Fix a sufficiently small $\varepsilon>0$ so that $\varepsilon<\varepsilon_0$.
Consider the function $\tau:D \rightarrow M$ given by 
\begin{align*}
\tau(y)&=\sup\{0<\delta<c\;|\;(\mathcal B_d(x,\delta) \times \mathcal B_{n-d}(y,\varepsilon)) \cap X \text{ is }\\
&\qquad \text{the graph of a continuous map}\}.
\end{align*}
The image $\tau(D)$ is discrete and closed for the same reason.
Set $\widetilde{\delta}=\inf \tau(D)$.
The open box $U=\mathcal B_d(x,\widetilde{\delta})$ satisfies the requirement of the lemma.

Let us consider the case in which $X$ is a $\pi$-special manifold.
Let $(U',\{V_y\}_{y \in D})$ be a pair satisfying the conditions (1) through (3) in Definition \ref{def:ours}.
It is easy to check that the pair $(U \cap U',\{(U \cap U') \times \mathcal B_{n-d}(y,\varepsilon)\}_{y \in D})$ satisfies the conditions (1) through (3) in Definition \ref{def:ours}.
We omit the details.
\end{proof}

We recall the definition of a quasi-special submanifold.

\begin{definition}\label{def:quasi-special}
Consider an expansion of a densely linearly order without endpoints $\mathcal M=(M,<,\ldots)$.
Let $\pi:M^n \rightarrow M^d$ be a coordinate projection. 
A definable subset is a \textit{$\pi$-quasi-special submanifold} or simply a \textit{quasi-special submanifold} if, for every point $x \in \pi(X)$, we can take an open box $U$ in $M^d$ containing the point $x$ and a family $\{V_y\}_{y \in \myfib(X,\pi,x)}$ of mutually disjoint open boxes in $M^n$ indexed by the set $\myfib(X,\pi,x)$ satisfying the conditions (1) and (3) in Definition \ref{def:ours}.
\end{definition}

It is obvious that a special submanifold is always a quasi-special submanifold.
The following example illustrates that the converse is false in general.
\begin{example}
Consider the ordered field of reals $(\mathbb R,<,0,1,+,\cdot)$.
The set $$\{(x,0)\;|\; x \in \mathbb R\} \cup \{(x,1/x)\;|\; x >0\}$$ is definable and a quasi-special submanifold, but it is not a special submanifold.
We can not take an open box $U$ and a family of open boxes $\{V_y\}_{y \in \myfib(X,\pi,x)}$ satisfying the condition (2) in Definition \ref{def:ours} at $x=0$.
\end{example}

We use the following lemma:
\begin{lemma}\label{lem:normal}
Consider a definably complete locally o-minimal structure $\mathcal M=(M,<,\ldots)$.
Let $X$ be a definable subset of $M^n$ and $\pi:M^n \rightarrow M^d$ be a coordinate projection.
Assume that all the points $x \in X$ are ($X,\pi$)-normal.
Then, $X$ is a $\pi$-quasi-special submanifold.
\end{lemma}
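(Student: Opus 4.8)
The plan is to fix an arbitrary $x \in \pi(X)$ and to produce, for this $x$, a single open box $U \ni x$ together with a family $\{V_y\}_{y \in \myfib(X,\pi,x)}$ of mutually disjoint open boxes satisfying conditions (1) and (3) of Definition \ref{def:ours}; this is exactly what Definition \ref{def:quasi-special} demands. Since that family is allowed to be non-definable, I will not need a definable selection at any point, which is fortunate because the present hypotheses furnish neither a group structure nor definable choice (so, unlike Lemma \ref{lem:equal_size}, I must avoid both balls $\mathcal B$ and Lemma \ref{lem:definable_choice}). First I would reduce to the case in which $\pi$ is the projection onto the first $d$ coordinates, so that $\overline{\tau}$ is the identity and $\myfib(X,\pi,x)$ is the honest fiber $D:=\{y \in M^{n-d}\;|\;(x,y)\in X\}$. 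The first observation is that $D$ is discrete and closed: for each $y\in D$, normality of the point $(x,y)$ yields an open box $B\ni y$ with $(\{x\}\times B)\cap X=\{(x,y)\}$, so every $y$ is isolated in $D$; hence $\dim D=0$ and $D$ is closed by Proposition \ref{prop:dim}(1).

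The heart of the argument is to upgrade the pointwise normality to a box $U\ni x$ that works for every $y\in D$ simultaneously. For each $y\in D$, normality provides a product box $A_y\times B_y$ over which $X$ is the graph of a continuous map $A_y\to B_y$, and any sub-box of $A_y$ containing $x$ inherits this property. I would then record the maximal coordinate-extents of such good boxes by definable functions on $D$, defined as suprema that exist by definable completeness. Because $\dim D=0$, Proposition \ref{prop:dim}(6) bounds the dimension of the image of each of these functions by $0$, so by Proposition \ref{prop:dim}(1) the image is a discrete, closed subset of $M$. A discrete closed subset of $M$ cannot accumulate at the coordinate $x_i$ — and this is precisely where local o-minimality is used — so the infimum over $y\in D$ of the right endpoints, and the supremum over $y\in D$ of the left endpoints, stay strictly on the correct side of $x_i$. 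Assembling these endpoints for $1\le i\le d$ yields one open box $U\ni x$ that is good for every $y\in D$.

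With $U$ fixed, each $y\in D$ gives a continuous $f_y:U\to M^{n-d}$ with $f_y(x)=y$ and $(U\times B_y)\cap X$ equal to the graph of $f_y$. It remains to choose vertical boxes making the $V_y=U\times B_y'$ mutually disjoint while still containing $f_y(U)$ and contained in $B_y$. Since $D$ is discrete and closed, its points can be enclosed in mutually disjoint open boxes $\{C_y\}$ (a short induction on the number of fiber coordinates: a zero-dimensional definable subset of $M$ is discrete and closed, and its consecutive points are separated using the density of the order; one then separates slabs coordinate by coordinate). Shrinking each $C_y$ inside the corresponding $B_y$ and then shrinking $U$ once more — the admissible extent is again a definable function on the zero-dimensional $D$, hence bounded away from degeneracy, so the shrinking can be made uniform — arranges $f_y(U)\subseteq C_y\subseteq B_y$. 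Taking $B_y'=C_y$ gives mutually disjoint boxes with $\pi(V_y)=U$ and $V_y\cap X$ the graph of $f_y|_U$, i.e.\ conditions (1) and (3). As $x$ was arbitrary, $X$ is a $\pi$-quasi-special submanifold.

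I expect the main obstacle to be exactly this uniform extraction of the common box $U$ over the possibly infinite fiber $D$. In the group setting of Lemma \ref{lem:equal_size} the work is carried by balls and the width functions $\sigma,\tau$; here neither balls nor definable choice are at hand, so the whole burden falls on the single structural fact that $D$ is zero-dimensional and therefore every definable function on it has discrete, closed, non-accumulating image. The secondary technical nuisances — the coordinate coupling that makes ``goodness'' of a box for fixed $y$ a joint rather than coordinate-wise condition when $d>1$ (which I would resolve by defining the endpoints through an iterated supremum, or by inducting on $d$), and the auxiliary claim that a discrete closed set admits mutually disjoint isolating boxes — are routine but should be written out rather than merely asserted.
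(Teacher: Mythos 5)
The first half of your argument is sound, and it is genuinely your own route (the paper does not prove this lemma internally; its proof is a citation of \cite[Lemma 4.2]{Fuji4} together with \cite[Theorem 2.5]{FKK}). The fiber $D=\myfib(X,\pi,x)$ is indeed discrete and closed; ``goodness'' of a box for a fixed $y\in D$ is a definable condition on its $2d$ endpoints, so your iterated-supremum scheme works: cap each supremum by a fixed constant so that it lies in $M$, note that a definable function on the zero-dimensional set $D$ has discrete, closed image by Proposition \ref{prop:dim}(1),(6), hence the relevant infimum is attained and stays strictly on the correct side of $x_i$. This does yield one box $U\ni x$ over which every $y\in D$ admits a good box $B_y$. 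Your slab induction producing pairwise disjoint (non-definable) boxes around the points of a discrete closed set is also fine.

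The genuine gap is the final uniformization. You choose the disjoint boxes $C_y$ \emph{first} --- necessarily non-definably, since splitting the gaps of $D$ requires arbitrary separating points (there are no midpoints without $+$, and the paper's closing Remark records that a \emph{definable} family of disjoint isolating boxes is not known to exist in this generality) --- and then assert that the ``admissible extent'' for shrinking $U$ so that $f_y(U)\subseteq C_y\subseteq B_y$ is a definable function on $D$. It is not: it is defined in terms of the non-definable data $C_y$ (and $B_y$), so the zero-dimensional-image argument cannot be applied to it; worse, the statement it is meant to justify is false as quantified. In the definably complete locally o-minimal structure $(\mathbb R,<,+,\mathbb Z)$ take $X=\{(a,b)\in\mathbb R^2\;|\;b-a\in\mathbb Z\}$ and $x=0$, so $D=\mathbb Z$ and $f_k(a)=a+k$; the adversarial choice $C_k=(k-\varepsilon_k,k+\varepsilon_k)$ with $\varepsilon_k\to 0$ as $|k|\to\infty$ consists of disjoint boxes around the fiber, yet no box $U'\ni 0$ satisfies $f_k(U')\subseteq C_k$ for all $k$, although suitable choices of the $C_k$ obviously exist. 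So the two operations must be performed in the opposite order: first shrink $U$ against \emph{definable} data --- for instance, the definable functions recording, for $a\in U$, the nearest point of the fiber $\{b\;|\;(a,b)\in X\}$ lying above, respectively below, $f_y(a)$, together with the union of all good boxes for $y$ (which, unlike $B_y$, is definable uniformly in $y$) --- so that, uniformly in $y$ (now legitimately by the zero-dimensionality argument), the ranges $f_y(U')$ acquire pairwise separated definable buffers; only then make the harmless non-definable choice of one open box $C_y$ inside each buffer. Carrying this out, in particular when $n-d>1$ so that ``nearest point above/below'' must be replaced by a coordinatewise induction, is the substantive content your sketch leaves out.
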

\begin{proof}
It immediately follows from \cite[Lemma 4.2]{Fuji4} and \cite[Theorem 2.5]{FKK}.
\end{proof}

We also need the following technical definition:
\begin{definition}
Consider an expansion of a densely linearly order without endpoints $\mathcal M=(M,<,\ldots)$.
Let $X$ be a definable subset of $M^n$ and $\pi:M^n \rightarrow M^d$ be a coordinate projection.
Let $x$ be a point in $\pi(X)$.
We say that $(X,\pi)$ is \textit{locally bounded at $x$} if there exist a bounded open box $U$ in $M^d$ containing the point $x$ such that $X \cap \pi^{-1}(U)$ is bounded.
\end{definition}

We give a sufficient condition for a quasi-special submanifold to be a special submanifold.
\begin{lemma}\label{lem:when}
Consider a definably complete locally o-minimal expansion of an ordered group $\mathcal M=(M,<,0,+,\ldots)$.
Let $\pi:M^n \rightarrow M^d$ be a coordinate projection.
A $\pi$-quasi-special submanifold $X$ in $M^n$ is a $\pi$-special submanifold if it is closed in $\pi^{-1}(\pi(X))$ and $(X,\pi)$ is locally bounded at every point in $\pi(X)$. 
\end{lemma}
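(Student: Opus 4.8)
The plan is to reduce to the case where $\pi$ is the projection onto the first $d$ coordinates (the permutation $\overline\tau$ of Definition~\ref{def:ours} preserves closedness, local boundedness, and the quasi-special/special properties), and then, fixing a point $x \in \pi(X)$, to verify condition~(2) of Definition~\ref{def:ours}; conditions~(1) and~(3) are already available because $X$ is quasi-special. First I would record that every point of $X$ is $(X,\pi)$-normal: taking $A \times B$ to be one of the boxes $V_y$ furnished by the quasi-special property witnesses normality, so Lemma~\ref{lem:equal_size} applies. Writing $D=\myfib(X,\pi,x)$, the quasi-special property forces each $y\in D$ to be isolated (a second point of $D$ inside the box $V_y$ would give a second point of the graph $V_y\cap X$ over $x$), so $D$ is discrete, hence closed by Proposition~\ref{prop:dim}(1). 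Local boundedness supplies a bounded open box $U_0\ni x$ with $X\cap\pi^{-1}(U_0)$ bounded, whence $D\subseteq\{y:(x,y)\in X\cap\pi^{-1}(U_0)\}$ is bounded.

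Next I would use Lemma~\ref{lem:equal_size} to replace the possibly irregular $V_y$ by vertical balls of one common radius. The lemma yields a radius $\varepsilon_1>0$ and a box $U_1\ni x$ such that $X\cap(U_1\times\mathcal B_{n-d}(y,\varepsilon_1))$ is the graph of a continuous map on $U_1$ for every $y\in D$. Since such a graph meets the fibre over $x$ only in $(x,y)$, any two distinct $y,y'\in D$ satisfy $|y-y'|\geq\varepsilon_1$, for otherwise $(x,y')$ would be a second graph point over $x$ inside $U_1\times\mathcal B_{n-d}(y,\varepsilon_1)$. Setting $\varepsilon=\varepsilon_1/2$ and re-applying Lemma~\ref{lem:equal_size} with radius $\varepsilon$ gives a box $U_2\subseteq U_1$ for which each $X\cap(U_2\times\mathcal B_{n-d}(y,\varepsilon))$ is a graph on $U_2$; now the balls $\mathcal B_{n-d}(y,\varepsilon)$ are pairwise disjoint because distinct centres lie at distance $\geq 2\varepsilon$. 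Thus $(U_2,\{U_2\times\mathcal B_{n-d}(y,\varepsilon)\}_{y\in D})$ already satisfies~(1) and~(3), and the whole problem reduces to shrinking $U_2$ so that~(2) holds.

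The heart of the argument, and the main obstacle, is establishing~(2): after shrinking the base, no sheet of $X$ should escape the union of the chosen balls. The point of using equal balls is that $\bigcup_{y\in D}\mathcal B_{n-d}(y,\varepsilon)$ equals the definable set $\{v:\mathrm{dist}(v,D)<\varepsilon\}$, so the ``bad set'' $G=\{(u,v)\in X:u\in U_0,\ \mathrm{dist}(v,D)\geq\varepsilon\}$ is definable (the distance to $D$ exists by definable completeness). I would prove $x\notin\mycl(\pi(G))$. First, $x\notin\pi(G)$, since $(x,v)\in X$ forces $v\in D$ and $\mathrm{dist}(v,D)=0$. If $x$ lay in $\mycl(\pi(G))\setminus\pi(G)$, then Corollary~\ref{cor:curve_selection} would give a definable curve $\beta:(0,\varepsilon')\to\pi(G)$ with $\beta(t)\to x$; lifting through $G$ by Lemma~\ref{lem:definable_choice} produces a bounded definable curve $\gamma(t)=(\beta(t),v(t))\in G\subseteq X$. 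As $G$ is bounded, Proposition~\ref{prop:limit} lets me assume $\gamma$ continuous with a limit $(x,v_0)$. Here closedness enters: $(x,v_0)\in\mycl(X)\cap\pi^{-1}(\pi(X))=X$, so $v_0\in D$ and $\mathrm{dist}(v_0,D)=0$; but $\mathrm{dist}(\cdot,D)$ is $1$-Lipschitz and $\mathrm{dist}(v(t),D)\geq\varepsilon$, forcing $\mathrm{dist}(v_0,D)\geq\varepsilon$, a contradiction. Hence some box $U_3\ni x$ misses $\pi(G)$.

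Finally I would set $U=U_2\cap U_3\cap U_0$ and $V_y=U\times\mathcal B_{n-d}(y,\varepsilon)$. Conditions~(1) and~(3) persist under shrinking the base, the balls stay pairwise disjoint, and~(2) holds because any $(u,v)\in X$ with $u\in U$ has $u\in U_0\setminus\pi(G)$, so $\mathrm{dist}(v,D)<\varepsilon$ and $v\in\mathcal B_{n-d}(y,\varepsilon)$ for some $y\in D$. Since $x\in\pi(X)$ was arbitrary, $X$ is $\pi$-special. The only genuinely delicate step is the crux of the third paragraph: it is precisely the combination of local boundedness (to extract the limit $v_0$) and closedness in $\pi^{-1}(\pi(X))$ (to return that limit to $X$, hence to $D$) that rules out sheets escaping to infinity or appearing from the frontier, as the example following Definition~\ref{def:quasi-special} shows is necessary.
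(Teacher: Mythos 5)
Your proof is correct, and its skeleton coincides with the paper's: reduce to the standard projection, use Lemma \ref{lem:equal_size} to replace the boxes coming from quasi-speciality by vertical balls of a single small radius $\varepsilon$, use local boundedness to confine $X$ over a box around $x$, and use closedness in $\pi^{-1}(\pi(X))$ to rule out sheets escaping the tubes. The genuine difference is the mechanism of that last, crucial step. After Lemma \ref{lem:equal_size} provides a box $B \ni x$ over whose closure the tubes are graphs, the paper takes a bounded box $W$ with $\pi^{-1}(B)\cap X\subseteq B\times W$, sets $Z=\mycl(W)\setminus\bigcup_{y}\mathcal B_{n-d}(y,\varepsilon)$, applies Lemma \ref{lem:dist} to the disjoint, closed, bounded sets $C_1=\{x\}\times\myfib(X,\pi,x)$ and $C_2=X\cap(\mycl(B)\times Z)$ (essentially your bad set $G$), and then shrinks the base box to radius $\overline{\delta}<\operatorname{dist}(C_1,C_2)$. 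You instead prove $x\notin\mycl(\pi(G))$ outright, via Corollary \ref{cor:curve_selection} applied to $\pi(G)$, a definable-choice lift through $G$, the limit from Proposition \ref{prop:limit}, and the $1$-Lipschitz property of $v\mapsto\operatorname{dist}(v,D)$. Your route is slightly longer but is tight exactly where the paper is loose: positivity of $\operatorname{dist}(C_1,C_2)$ alone does not yield condition (2) of Definition \ref{def:ours} --- a point $(u,v)\in C_2$ with $|u-x|<\overline{\delta}$ only forces $|v-y|\geq\operatorname{dist}(C_1,C_2)$ for every $y\in\myfib(X,\pi,x)$, which is no contradiction; what is actually needed is $\overline{\delta}<\operatorname{dist}(\{x\},\pi(C_2))$, and the inequality $\operatorname{dist}(C_1,C_2)\geq\operatorname{dist}(\{x\},\pi(C_2))$ points the wrong way. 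The paper's step is repaired by applying Lemma \ref{lem:dist} to $\{x\}$ and $\pi(C_2)$ (closed and bounded by \cite[Lemma 1.7]{M}, exactly as in the proof of Lemma \ref{lem:case1}), and that repaired statement, $x\notin\mycl(\pi(C_2))$, is precisely what your curve-selection argument delivers. Two further small merits of your write-up: the $\varepsilon_1/2$ step makes the pairwise disjointness of the tubes explicit, which the paper buries in the phrase ``sufficiently small $\varepsilon$''; and, like the paper, you verify the defining condition only at points of $\pi(X)$, which is evidently the intended reading of Definition \ref{def:ours}.
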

\begin{proof}
We may assume that $\pi$ is the coordinate projection onto the first $d$ coordinates by permuting coordinates if necessary.
Fix an arbitrary point $x \in \pi(X)$.
Set $Y=\myfib(X,\pi,x)=\{y \in M^{n-d}\;|\;(x,y) \in X\}$.
Note that every point in $X$ is $(X,\pi)$-normal by the definition of quasi-special submanifolds.
Fix a sufficiently small positive $\varepsilon >0$.
We can take an open box $B$ containing the point $x$ such that $X \cap (B \times \mathcal B_{n-d}(y,\varepsilon))$ is the graph of a continuous map defined on $B$ for each $y \in Y$ by Lemma \ref{lem:equal_size}.
We may assume that $B$ is bounded and $X \cap (\mycl(B) \times \mathcal B_{n-d}(y,\varepsilon))$ is the graph of a continuous map defined on $\mycl(B)$ by shrinking $B$ if necessary.

Since $(X,\pi)$ is locally bounded at $x$, by shrinking $B$ again if necessary, $\pi^{-1}(B) \cap X$ is bounded.
Take a bounded open box $W$ so that $\pi^{-1}(B) \cap X$ is contained in $B \times W$.
Put $Z= \mycl(W) \setminus \bigcup_{y \in Y} \mathcal B_{n-d}(y,\varepsilon)$.
It is easy to show that the definable sets
\begin{align*}
C_1 &= \{x \} \times Y \text{ and }\\
C_2 &= X \cap (\mycl(B) \times Z)
\end{align*}
are closed and bounded, and their intersection is empty.
The proof is left to readers. 
Let $\mathfrak d$ be the distance of $C_1$ to $C_2$.
It is positive by Lemma \ref{lem:dist}.
Choose $\overline{\delta}>0$ so that $\overline{\delta}<\mathfrak d$ and $\mathcal B_d(x,\overline{\delta}) \subseteq B$.
The pair $(\mathcal B_d(x,\overline{\delta}),\{\mathcal B_d(x,\overline{\delta}) \times \mathcal B_{n-d}(y,\varepsilon)\}_{y \in Y})$ satisfies the conditions (1) through (3) in Definition \ref{def:ours}.
We have proven that $X$ is a $\pi$-special submanifold.
\end{proof}

\subsection{Comparison with other definitions}
We demonstrate that a special manifold defined in \cite{M2} and a multi-cell in \cite{F} coincide with a special manifold in our sense when the structure is definably complete locally o-minimal.
We first recall Miller's definition of special manifolds.
\begin{definition}[\cite{M2}]\label{def:miller}
We only consider an expansion of the ordered set of reals $(\mathbb R,<)$.
Let $\pi:\mathbb R^n \rightarrow \mathbb R^d$ be a coordinate projection. 
A $d$-dimensional submanifold $X$ of $\mathbb R^n$ is \text{$\pi$-special} if, for each $x \in \pi(X)$, there exists an open box $U$ in $\mathbb R^d$ containing the point $x$ such that each connected component $C$ of $X \cap \pi^{-1}(U)$ projects homeomorphically onto $U$. 
\end{definition}
Note that there are no connected definable sets other than singletons in some ordered structure whose universe is not $\mathbb R$.
For instance, let $\mathbb R_{\text{alg}}$ denote the set of algebraic real numbers.
The structure $(\mathbb R_{\text{alg}},0,1,+,\cdot)$ is o-minimal because sets definable in this structures are semialgebraic by Tarski-Seidenberg principle \cite[Theorem 2.2.1]{BCR} and any nonempty open interval is not connected by \cite[Example 2.4.1]{BCR}.
We can easily derive that a connected definable set is a singleton in this case.
This example illustrates that Miller's definition of special submanifold cannot be extended literally to the non-real cases.

We show that Miller's definition coincides with ours when the structure is a locally o-minimal expansion of the ordered set of reals $(\mathbb R,<)$.
\begin{proposition}\label{prop:miller}
Consider an expansion of the ordered set of reals $(\mathbb R,<)$.
Let $\pi:\mathbb R^n \rightarrow \mathbb R^d$ be a coordinate projection. 
A definable subset of $\mathbb R^n$ is a $\pi$-special submanifold in the sense of Definition \ref{def:miller} if it is a $\pi$-special submanifold in the sense of Definition \ref{def:ours}.
The opposite implication holds true when the structure is locally o-minimal.
\end{proposition}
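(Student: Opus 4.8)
The plan is to reduce, in each definition, to the case where $\pi$ is the projection onto the first $d$ coordinates. Both Definition \ref{def:ours} and Definition \ref{def:miller} are invariant under the attached coordinate permutation $\overline{\tau}$, which is a homeomorphism of $\mathbb{R}^n$ carrying connected components to connected components and preserving the property of projecting homeomorphically; so I may assume throughout that $\pi$ is this standard projection and that $\myfib(X,\pi,x)=\{y\;|\;(x,y)\in X\}$. The two implications are then handled separately, the first purely topologically and the second by feeding local information into the lemmas that require local o-minimality.

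For the implication from Definition \ref{def:ours} to Definition \ref{def:miller}, I would fix $x\in\pi(X)$ and take the open box $U$ together with the family $\{V_y\}_{y\in\myfib(X,\pi,x)}$ supplied by Definition \ref{def:ours}. Conditions (1) and (2) give $X\cap\pi^{-1}(U)=\bigcup_y(V_y\cap X)$, a disjoint union because the $V_y$ are mutually disjoint. By (3) each $V_y\cap X$ is the graph of a continuous map on the box $U$, hence connected (homeomorphic to $U$) and projecting homeomorphically onto $U$; it is also open in $X\cap\pi^{-1}(U)$ (it is the trace of the open box $V_y$) and closed (its complement is the union of the remaining open pieces). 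A clopen connected set is a connected component, so the components of $X\cap\pi^{-1}(U)$ are exactly the sets $V_y\cap X$, each projecting homeomorphically onto $U$. The same local graph description shows $X$ is a $d$-dimensional submanifold, so Miller's condition holds; no local o-minimality is needed here.

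For the converse I would first establish that every point of $X$ is $(X,\pi)$-normal, using only that $X$ is a topological $d$-manifold (hence locally connected). Fix $(x_0,y_0)\in X$, take Miller's box $U\ni x_0$, and let $C$ be the connected component of $X\cap\pi^{-1}(U)$ containing $(x_0,y_0)$; by hypothesis $C$ is the graph of a continuous section over $U$. Since $X\cap\pi^{-1}(U)$ is open in the locally connected space $X$, its component $C$ is open in $X$, so there is an open box $A_0\times B_0\ni(x_0,y_0)$ with $(A_0\times B_0)\cap X\subseteq C$; shrinking $A_0$ so that the section maps $A_0$ into $B_0$, the set $(A_0\times B_0)\cap X$ is exactly the graph of that section over $A_0$. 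This set is definable, so the section is a definable continuous map and $(x_0,y_0)$ is $(X,\pi)$-normal.

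With normality in hand I would invoke the machinery requiring local o-minimality: Lemma \ref{lem:normal} yields that $X$ is a $\pi$-quasi-special submanifold, giving conditions (1) and (3), and Lemma \ref{lem:equal_size} produces, for each $x\in\pi(X)$, a single box $U$ and a uniform radius $\varepsilon$ such that each $V_y:=U\times\mathcal{B}_{n-d}(y,\varepsilon)$ meets $X$ in a graph over $U$; shrinking $\varepsilon$ makes the $V_y$ mutually disjoint, the fiber $\myfib(X,\pi,x)$ being discrete and closed by Proposition \ref{prop:dim}(1) so that the relevant infimum is positive. The remaining task is the covering condition (2), which is precisely the feature distinguishing special from quasi-special submanifolds (as the hyperbola example shows), and I expect this to be the crux. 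Given $(x',z)\in X\cap\pi^{-1}(U)$, it lies on some sheet $C_{y'}$, the graph over $U$ of the section taking value $y'$ over $x$; the set $V_{y'}\cap X$ is a graph over all of $U$, is connected and contains $(x,y')\in C_{y'}$, hence lies in the single component $C_{y'}$, and a graph defined over all of $U$ contained in another graph over $U$ must coincide with it. Thus $V_{y'}\cap X=C_{y'}$ and $(x',z)\in V_{y'}$, giving $X\cap\pi^{-1}(U)\subseteq\bigcup_y V_y$ and condition (2). The main obstacle is exactly this passage from the purely topological component data of Definition \ref{def:miller} to the definable, fiber-indexed box data of Definition \ref{def:ours}; local o-minimality enters to turn the local normality into a uniform box family and to control the discrete fibers.
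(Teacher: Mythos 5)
Your overall architecture coincides with the paper's proof: for the forward direction you observe that the sets $V_y\cap X$ are connected, clopen in $X\cap\pi^{-1}(U)$, hence exactly the connected components, each projecting homeomorphically onto $U$; for the converse you establish that every point of $X$ is $(X,\pi)$-normal, invoke Lemma \ref{lem:normal} to conclude that $X$ is a $\pi$-quasi-special submanifold, and then use Miller's condition to force each connected component of $X\cap\pi^{-1}(U)$ to be a graph over $U$ which, by the graph-inside-graph coincidence argument, equals some $V_y\cap X$; this yields condition (2) of Definition \ref{def:ours}. That coincidence argument is precisely what the paper compresses into the word ``Consequently''.

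There is, however, one step that fails as written: you cannot invoke Lemma \ref{lem:equal_size}. The proposition assumes only an expansion of $(\mathbb R,<)$ that is locally o-minimal; such a structure is automatically definably complete (so Lemma \ref{lem:normal} and Proposition \ref{prop:dim} do apply), but there is no group operation in the language, whereas Lemma \ref{lem:equal_size} is stated and proved for definably complete locally o-minimal expansions of \emph{ordered groups} --- its proof takes infima of definable sets of radii, which requires $|x-y|$ to be definable. For the same reason your disjointness claim is unsupported: topological discreteness and closedness of $\myfib(X,\pi,x)$ alone do \emph{not} give a positive infimum of pairwise separations (consider $\mathbb N\cup\{n+1/n^2\}$ in $\mathbb R$), and the tameness argument that would exclude such behaviour (Proposition \ref{prop:dim}(1),(6) applied to the set of pairwise distances) again needs the group structure. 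The repair is immediate and is what the paper does: skip the uniformization entirely. Definition \ref{def:quasi-special} already supplies, at each $x\in\pi(X)$, a box $U$ and a family of \emph{mutually disjoint} boxes $\{V_y\}$ satisfying conditions (1) and (3); shrink $U$ into Miller's box (Miller's property persists under shrinking, since the components restrict to graphs), and run your final coincidence argument on these $V_y$ unchanged. With that substitution your proof is the paper's proof.
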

\begin{proof}
We may assume that $\pi$ is the projection onto the first $d$ coordinates without loss of generality.
We fix a definable subset $X$ of $\mathbb R^n$.

Assume first that $X$ is $\pi$-special submanifold in the sense of Definition \ref{def:ours}.
Fix an arbitrary point $x \in \pi(X)$.
Take an open box $U$ in $\mathbb R^d$ containing the point $x$ and a family $\{V_y\}_{y \in \myfib(X,\pi,x)}$ of open boxes in $\mathbb R^n$ satisfying the conditions in Definition \ref{def:ours}.
Since $\{V_y\}_{y \in \myfib(X,\pi,x)}$ is a family of mutually disjoint open boxes, $X \cap V_y$ are connected components of $X \cap \pi^{-1}(U)$.
They are graphs of continuous maps, and they project homeomorphically onto $U$. 
It means that $X$ is a $\pi$-special submanifold in the sense of Definition \ref{def:miller}.

We next show the opposite implication.
Assume that $X$ is a $\pi$-special submanifold in the sense of Definition \ref{def:miller}.
It is obvious that all the points $x \in X$ are $(X,\pi)$-normal.
Therefore, $X$ is a $\pi$-quasi-special submanifold by Lemma \ref{lem:normal}. 
Fix an arbitrary point $x \in \pi(X)$.
Since $X$ is a quasi-special submanifold, we can take an open box $U$ containing the point $x$ and, for each $y \in X$ with $\pi(y)=x$, there exists an open box $V_y$ such that $y \in V_y$, $\pi(V_y) =U$ and $V_y \cap X$ is the graph of a continuous function defined on $U$.
Shrinking $U$ if necessary, we may assume that any connected component of $X \cap \pi^{-1}(U)$ projects homeomorphically onto $U$ by the assumption.
Consequently, each connected component of $X \cap \pi^{-1}(U)$  is the graph of a continuous map defined on $U$ and contained in some $V_y$.
It means that $X \cap \pi^{-1}(U) \subseteq \bigcup_{y \in \myfib(X,\pi,x)}V_y$.
Therefore, $X$ is a $\pi$-special submanifold in the sense of Definition \ref{def:ours}.
\end{proof}

We next recall the definition of Fornasiero's multi-cell.
\begin{definition}\label{def:Fornasiero}
Let $\mathcal F=(F,<,+,0,\cdot,1,\ldots)$ be an expansion of an ordered commutative field.
Let $X$ be a definable subset of $F^n$ of dimension $d$ and $\pi:F^n \rightarrow F^d$ be a coordinate projection.
Take the permutation $\tau$ of $\{1,\ldots, n\}$ satisfying the conditions (a) and (b) in Definition \ref{def:ours}.
The notation $\overline{\tau}$ denotes the map defined in Definition \ref{def:ours}.
We first consider the case in which $\overline{\tau}^{-1}(X) \subseteq F^d \times (0,1)^{n-d}$. 
A point $a \in F^d$ is \textit{$(X,\pi)$-bad} if it is the projection of a non-$(X,\pi)$-normal point; otherwise, the point $a$ is called \textit{$(X,\pi)$-good}.

Consider the case in which $X$ does not satisfy the previous condition.
Let $\phi:F \rightarrow (0,1)$ be a definable homeomorphism.
Consider the map $$\psi:\operatorname{id}^d \times \phi^{n-d}:F^d \times F^{n-d} \rightarrow F^d \times (0,1)^{n-d}.$$
We say that $a$ is \textit{$(X,\pi)$-good} if it is $(\psi(\overline{\tau}^{-1}(X)),\pi\circ \overline{\tau})$-good.
We define $(X,\pi)$-bad points etc. similarly.

The definable set $X$ is a \textit{$\pi$-multi-cell} if every point of $\pi(X)$ is $(X,\pi)$-good.
\end{definition}
Fornasiero concentrates on the case in which the structure is an expansion of an ordered commutative field.
The following fact is well-known.
We give a proof for the readers' convenience.
\begin{lemma}\label{lem:dctc}
A discrete set definable in a definably complete locally o-minimal expansion of an ordered field is bounded. 
\end{lemma}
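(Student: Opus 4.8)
The plan is to reduce the statement to the one-variable case and then exploit the multiplicative inverse, which is the only place where the field structure is genuinely used. So let $D$ be a definable discrete subset of $F^n$; by Proposition \ref{prop:dim}(1) it has dimension zero. For each coordinate projection $\pi_i : F^n \to F$, Proposition \ref{prop:dim}(6) gives $\dim \pi_i(D) \le \dim D = 0$, so every $\pi_i(D)$ is again discrete (or empty). Since a subset of $F^n$ is bounded precisely when each of its coordinate projections is bounded (recall $|x| = \max_i |x_i|$), it suffices to prove the lemma for a definable discrete subset $D$ of $F$.

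For the one-variable case I would argue by contradiction, assuming $D$ is unbounded. After replacing $D$ by $-D$ if necessary (still definable and discrete), I may assume $D$ is unbounded above, so $D_{>0} := D \cap F_{>0}$ is unbounded above. Here the field hypothesis enters: consider the definable set
$$E = \{\, 1/x \mid x \in D_{>0} \,\},$$
the image of $D_{>0}$ under $x \mapsto 1/x$. By Proposition \ref{prop:dim}(6) we have $\dim E \le \dim D_{>0} = 0$, so $E$ is discrete, and hence \emph{closed} by Proposition \ref{prop:dim}(1). (Alternatively, one checks directly that $x \mapsto 1/x$ is an order-reversing definable bijection of $F_{>0}$, so a homeomorphism preserving discreteness; but the dimension bound bypasses this.)

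The contradiction then comes from the behaviour of $E$ near $0$. Because $D_{>0}$ is unbounded above, for every $\varepsilon > 0$ there is $x \in D_{>0}$ with $x > 1/\varepsilon$, whence $1/x \in (0,\varepsilon) \cap E$. Thus every open interval containing $0$ meets $E$, so $0 \in \mycl(E)$, while clearly $0 \notin E$. This contradicts the closedness of $E$, and therefore $D$ must be bounded.

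I expect the only real difficulty to be conceptual rather than computational: recognizing that the inverse map $x \mapsto 1/x$ is the right device, converting unboundedness ``at infinity'' into an illegal accumulation at $0$. Everything else is routine dimension bookkeeping via Proposition \ref{prop:dim}. This also explains why the field assumption cannot be weakened to an ordered group, consistent with the DCTC terminology of Definition \ref{def:dctc}: without a multiplicative inverse there is no definable way to pull $+\infty$ back to an interior point, and the conclusion genuinely fails in that generality.
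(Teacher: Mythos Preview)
Your proof is correct and follows essentially the same route as the paper: reduce to $n=1$ via coordinate projections and Proposition~\ref{prop:dim}(1),(6), then use the inverse map $x\mapsto 1/x$ to turn unboundedness at infinity into bad behaviour of a definable discrete set near $0$. The only cosmetic difference is that the paper invokes local o-minimality directly on $\{1/x : x\in X,\ x>0\}\cap(-r,r)$ to read off an explicit upper bound, whereas you package the same fact through Proposition~\ref{prop:dim}(1) (dimension zero implies closed) and derive a contradiction from $0\in\mycl(E)\setminus E$.
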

\begin{proof}
Let $\mathcal F=(F,<,+,0,\cdot,1,\ldots)$ be a definably complete locally o-minimal expansion of an ordered field.
Let $X$ be a definable discrete subset of $F^n$.
We first demonstrate $X$ is bounded when $n=1$.
Consider the set $Y_+=\{1/x\;|\; x > 0 \text{ and } x \in X\}$.
By local o-minimality, there exists $r>0$ such that the intersection $Z_+=(-r,r) \cap Y_+$ is a finite union of points and open intervals.
If it contains an open interval, $X$ also contains an open interval.
It is a contradiction.
If $Z_+$ is empty, $X \subseteq (-\infty,1/r)$.
Otherwise, there exists the smallest element $s \in Z_+$.
We have $X \subseteq (-\infty,1/s)$ in this case.
We have demonstrated that $X$ is bounded above.
We can prove that $X$ is bounded below in the same manner.
We omit the details.
We have demonstrated that $X$ is bounded when $n=1$.

We consider the case in which $n>1$.
Let $\pi_i:F^n \rightarrow F$ denote the projection onto the $i$-th coordinate for $1 \leq i \leq n$. 
The projection image $\pi_i(X)$ is discrete by Proposition \ref{prop:dim}(1),(6), and it is bounded by this lemma for $n=1$.  
There exists a bounded open interval $I_i$ with $\pi_i(X) \subseteq I_i$ for each $i$.
It is obvious that $X \subseteq \prod_{i=1}^n I_i$.
It implies that $X$ is bounded.
\end{proof}

Fornasiero's multi-cell is equivalent to our special submanifold when the structure is a definably complete locally o-minimal expansion of an ordered field.
\begin{proposition}
Let $\mathcal F=(F,<,+,0,\cdot,1,\ldots)$ be a definably complete locally o-minimal expansion of an ordered field.
Let $\pi:F^n \rightarrow F^d$ be a coordinate projection.
A definable set is a $\pi$-special submanifold in the sense of Definition \ref{def:ours} if and only if it is a $\pi$-multi-cell.
\end{proposition}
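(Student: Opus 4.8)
The plan is to reduce everything to the case where $\pi$ is the projection onto the first $d$ coordinates. Both Definition \ref{def:ours} and Definition \ref{def:Fornasiero} are phrased through $\overline{\tau}^{-1}(X)$ and the projection $\pi \circ \overline{\tau}$, so I may replace $X$ by $\overline{\tau}^{-1}(X)$ and assume $\tau = \operatorname{id}$. Next I would record a transport principle for the homeomorphism $\psi = \operatorname{id}^d \times \phi^{n-d}$ of Definition \ref{def:Fornasiero}: since $\phi$ is a definable homeomorphism of $F$ onto $(0,1)$ (necessarily monotone), $\psi$ commutes with $\pi$ and carries open boxes to open boxes and base-graphs of continuous maps to base-graphs of continuous maps, so $X$ is $\pi$-special if and only if $\psi(X)$ is $\pi$-special. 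Because the multi-cell condition on $X$ is by definition a condition on $Z := \psi(\overline{\tau}^{-1}(X))$ (or on $\overline{\tau}^{-1}(X)$ itself when it already lies in the strip $F^d \times (0,1)^{n-d}$), it suffices to prove, for a definable $Z \subseteq F^d \times (0,1)^{n-d}$ with $\pi$ the first-$d$ projection, that $Z$ is $\pi$-special if and only if every point $(a,b)$ with $a \in \pi(Z)$ is $(Z,\pi)$-normal; I call the latter property \emph{goodness} of $Z$.

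For the implication goodness $\Rightarrow$ special I argue as follows. Goodness makes every point of $Z$ normal, so $Z$ is a $\pi$-quasi-special submanifold by Lemma \ref{lem:normal}. To promote this to special I intend to apply Lemma \ref{lem:when}, so I must check that $Z$ is closed in $\pi^{-1}(\pi(Z))$ and that $(Z,\pi)$ is locally bounded everywhere. Local boundedness is immediate since $Z$ lies in the bounded strip. For closedness, suppose $(a,b) \in (\mycl(Z) \setminus Z) \cap \pi^{-1}(\pi(Z))$. By the curve selection lemma (Corollary \ref{cor:curve_selection}) I take a definable continuous curve into $Z$ converging to $(a,b)$; if $(a,b)$ were $(Z,\pi)$-normal, then $Z$ is locally the graph of a continuous $f$, the curve eventually lies on this graph, and passing to the limit forces $b = f(a)$, whence $(a,b) \in Z$, a contradiction. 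Thus $(a,b)$ is not normal, contradicting goodness, so $Z$ is closed in $\pi^{-1}(\pi(Z))$ and Lemma \ref{lem:when} gives that $Z$ is $\pi$-special.

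For the converse, special $\Rightarrow$ good, a point $(a,b) \in Z$ lies in one of the graph boxes of Definition \ref{def:ours} and is therefore normal, while for a point $(a,b) \notin Z$ with $a \in \pi(Z)$ it again suffices to know that $Z$ is closed in $\pi^{-1}(\pi(Z))$: then $(a,b) \notin \mycl(Z)$, a neighborhood misses $Z$, and $(a,b)$ is normal by emptiness. To prove closedness I would use Lemma \ref{lem:equal_size} to choose, at the given $a$, uniform boxes $V_y = U \times \mathcal{B}_{n-d}(y,\varepsilon)$ indexed by $\myfib(Z,\pi,a)$; their disjointness forces the centers to be $2\varepsilon$-separated, so $\myfib(Z,\pi,a)$ is discrete and closed. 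Given a curve $\gamma = (\alpha,\beta)$ in $Z$ converging to $(a,b)$, for small parameter it lies in the definable union $\bigcup_y V_y$, and the nearest-center retraction onto $\myfib(Z,\pi,a)$ is definable and locally constant; hence the composite sending the parameter to the center of the box containing $\gamma$ is a continuous definable map from an interval into a discrete set, so it is constant, say $y_0$. Then $\gamma$ stays in $V_{y_0}$, lies on the single graph $g_{y_0}$, and the limit yields $b = g_{y_0}(a) \in \myfib(Z,\pi,a)$, so $(a,b) \in Z$, which is the desired closedness.

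Reassembling the strip equivalence through $\psi$ and $\overline{\tau}$ gives the proposition, where I also note that a nonempty $\pi$-special submanifold has dimension $d$, matching the standing hypothesis of the multi-cell definition. The step I expect to be the main obstacle is precisely this last closedness argument in the special $\Rightarrow$ good direction: the naive reasoning wants $\myfib(Z,\pi,a)$ to be finite so that a continuous curve cannot drift among infinitely many sheets, but in a general definably complete locally o-minimal structure a bounded discrete definable set need not be finite. The device that avoids finiteness is to pass to the uniform $\varepsilon$-boxes of Lemma \ref{lem:equal_size} and exploit the locally constant nearest-center map together with the definable connectedness of the interval, which rules out sheet-switching directly.
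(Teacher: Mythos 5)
Your proof is correct, but it is organized around a device the paper never makes explicit: the transport principle that $X$ is $\pi$-special if and only if $\psi(X)$ is, which lets you argue entirely inside the bounded strip $F^d \times (0,1)^{n-d}$. Both arguments rest on the same three pillars (Lemmas \ref{lem:normal}, \ref{lem:equal_size} and \ref{lem:when}), but the surrounding work is genuinely different. In the direction multi-cell $\Rightarrow$ special, the paper proves that $(X,\pi)$ is locally bounded by contradiction --- definable choice along the sets $S(t)$, then Proposition \ref{prop:limit}(2) to manufacture a non-normal frontier point on the strip boundary --- whereas in your setup local boundedness of $(Z,\pi)$ is free because $Z$ sits inside the strip. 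In the direction special $\Rightarrow$ multi-cell, the paper's key field-specific ingredient is Lemma \ref{lem:dctc} (definable discrete sets are bounded), used together with Lemma \ref{lem:equal_size} to trap the fiber in a closed box away from $\partial((0,1)^{n-d})$; you avoid Lemma \ref{lem:dctc} altogether by proving that $Z$ is closed in $\pi^{-1}(\pi(Z))$ via curve selection and the locally constant sheet index, which disposes of interior and boundary bad points in one stroke. What the paper's route buys is brevity and an explicit display of where the field hypothesis enters; what yours buys is a softer, more uniform argument, at the cost of verifying the transport principle (note this does require the small check that $V'_y \cap (F^d \times (0,1)^{n-d})$ still projects onto $U$, which follows from condition (3) of Definition \ref{def:ours}). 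Two minor repairs to your sketch: disjointness of the boxes $U \times \mathcal B_{n-d}(y,\varepsilon)$ is not literally part of the conclusion of Lemma \ref{lem:equal_size}; either take $\varepsilon$ below half the minimal gap of $\myfib(Z,\pi,a)$ (the set of gaps is definable, discrete and closed by Proposition \ref{prop:dim}(1),(4),(6), hence has positive, attained infimum), or dispense with disjointness by observing that $\{t \;|\; |\beta(t)-y_0|<\varepsilon\}$ is definable, open, and also closed in the parameter interval (by continuity of the graph map attached to the box around $y_0$), so definable connectedness yields constancy of the sheet directly.
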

\begin{proof}
Let $X$ be a definable subset of $F^n$.
We may assume that $\pi$ is the projection onto the first $d$ coordinates as usual. 
We first demonstrate that, if $X$ is a $\pi$-special submanifold, it is a $\pi$-multi-cell.
Let $\psi:F^d \times F^{n-d} \rightarrow F^d \times (0,1)^{n-d}$ be the definable homeomorphism given in Definition \ref{def:Fornasiero}.
By the definition of $\pi$-special manifold, it is obvious that any point in $\pi(X) \times F^{n-d}$ is $(\psi(X),\pi)$-normal out of the set $\pi(X) \times \partial((0,1)^{n-d})$, where $\partial((0,1)^{n-d})$ denotes the frontier of $(0,1)^{n-d}$.
Fix an arbitrary point $x \in \pi(X)$ and arbitrary $t \in \partial((0,1)^{n-d})$.
We can choose a bounded closed box $B$ in $F^{n-d}$ such that the set $\{y \in F^{n-d}\;|\; (x,y) \in X\}$ is contained in $B$ by Lemma \ref{lem:dctc}.
Thanks to Lemma \ref{lem:equal_size}, expanding $B$ a little bit, we can take an open box $U$ containing the point $x$ such that $X \cap \pi^{-1}(U)$ is contained in $U \times B$.
The image $\psi(B)$ is contained in $(0,1)^{n-d}$ and closed in $F^{n-d}$.
We can take an open box $V$ in $F^{n-d}$ containing the point $t$ and having an empty intersection with $\psi(B)$.
The intersection $(U \times V) \cap \psi(X)$ is empty.
It means that $(x,t)$ is $(\psi(X),\pi)$-normal.
We have demonstrated that $X$ is $\pi$-multi-cell.

We next show the opposite implication.
Assume that $X$ is a $\pi$-multi-cell.
As an initial step, we show that $(X,\pi)$ is locally bounded at every point in $\pi(X)$.
Fix a point $x \in \pi(X)$.
Assume for contradiction that $(\mathcal B_d(x,t) \times F^{n-d}) \cap X$ is not bounded for any $t>0$.
It implies that the set $$S(t):=(\mathcal B_d(x,t) \times (F^{n-d} \setminus \mathcal B_{n-d}(O,1/t))) \cap X$$ is not empty for any $t>0$, where $O$ denotes the origin of $F^d$.
By Lemma \ref{lem:definable_choice}, we can find a definable function $f:(0,\infty) \rightarrow F^n$ such that $f(t)$ is an element of $S(t)$ for $t>0$.
There exists a unique limit $z=\lim_{t \to 0}\psi(f(t)) \in \partial ((0,1)^{n-d})$ by Proposition \ref{prop:limit}(2).
The point $(x,z) \in F^d \times F^{n-d}$ is in the frontier of $\psi(X)$.
In particular, it is not contained in $\psi(X)$, and any open box containing it has a nonempty intersection with $\psi(X)$.
It implies that the point $(x,z)$  is not $(\psi(X),\pi)$-normal.
It contradicts the assumption that $X$ is a $\pi$-multi-cell.
We have demonstrated that $(X,\pi)$ is locally bounded at every point in $\pi(X)$.

It is obvious that any point in $X$ is $(X,\pi)$-normal.
The $\pi$-multi-cell $X$ is a $\pi$-quasi-submanifold by Lemma \ref{lem:normal}.
It is obvious that $X$ is closed in $\pi^{-1}(\pi(X))$ from the definition of multi-cells.
The set $X$ is a $\pi$-special submanifold by Lemma \ref{lem:when}.
\end{proof}

\subsection{Decomposition into special submanifolds}
We finally demonstrate the main theorem which asserts that any definable set is decomposed into finitely many special submanifolds.
We consider the following cases separately.
\begin{enumerate}
\item[(A)] There exists a definable, discrete, closed and unbounded subset of $M$;
\item[(B)] The structure is a model of DCTC.
\end{enumerate}
We first consider the case (A).

\begin{lemma}\label{lem:succ}
Let $\mathcal M=(M,<,\ldots)$ be a definably complete structure.
Let $X$ be a definable discrete closed subset of $M$ such that $\sup(X)=\infty$.
Then, there exists a definable map $\mysucc:X \rightarrow X$ such that $x<\mysucc(x)$ and there are no elements in $X$ between $x$ and $\mysucc(x)$.
\end{lemma}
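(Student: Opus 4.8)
The plan is to define $\mysucc(x)$ as the infimum of the set of elements of $X$ lying strictly above $x$, and then verify that this infimum is genuinely an element of $X$ and behaves as a successor. Concretely, for each $x \in X$ set $S_x = \{y \in X \;|\; y > x\}$ and put $\mysucc(x) = \inf(S_x)$. Since $\sup(X)=\infty$, the set $X$ is unbounded above, so $S_x$ is nonempty for every $x \in X$; it is also bounded below by $x$. Definable completeness of $\mathcal M$ therefore guarantees that $\mysucc(x)=\inf(S_x)$ exists in $M$ and satisfies $\mysucc(x) \geq x$. The assignment $x \mapsto \mysucc(x)$ is visibly definable, because $S_x$ is the fiber of a definable set and the infimum is a definable operation.

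First I would check that $\mysucc(x) \in X$. Because $\mathcal M$ expands a dense linear order, the infimum of a nonempty definable set that attains it in $M$ lies in the closure of that set: for any $b > \mysucc(x)$, the element $b$ fails to be a lower bound of $S_x$, so there is $t \in S_x$ with $\mysucc(x) \leq t < b$, and hence every open interval around $\mysucc(x)$ meets $S_x$. Thus $\mysucc(x) \in \mycl(S_x) \subseteq \mycl(X) = X$, using that $X$ is closed.

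Next I would show the inequality is strict, which is where discreteness enters. Since $X$ is discrete, the point $x$ is isolated, so there is $\varepsilon > 0$ with $(x-\varepsilon, x+\varepsilon) \cap X = \{x\}$. Consequently $S_x \subseteq [x+\varepsilon, \infty)$, whence $\mysucc(x) \geq x+\varepsilon > x$. Finally, there is no element of $X$ strictly between $x$ and $\mysucc(x)$: any such $z$ would belong to $S_x$ and would therefore satisfy $z \geq \inf(S_x) = \mysucc(x)$, a contradiction. This establishes all the required properties.

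The main obstacle, though a mild one, is ensuring that the infimum is actually attained inside $X$ rather than merely being a limit point outside it; this is exactly what closedness of $X$ together with the order-topology argument above delivers. The strictness $x < \mysucc(x)$ is the only place where discreteness is genuinely needed, and the ``no element in between'' clause is immediate from the defining property of the infimum.
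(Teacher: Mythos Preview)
Your proof is correct and follows exactly the same approach as the paper: define $\mysucc(x)=\inf\{y\in X\;|\;y>x\}$, use closedness of $X$ to see the infimum lies in $X$, use discreteness to get $x<\mysucc(x)$, and read off the ``no element in between'' property from the definition of the infimum. The paper's proof is simply a one-line version of your argument.
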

\begin{proof}
Define $\mysucc(x)=\inf\{y \in X\;|\;y>x\}$.
We have $\mysucc(x) \in X$ because $X$ is closed.
We get $x<\mysucc(x)$ because $X$ is discrete.
It is obvious that there are no elements in $X$ between $x$ and $\mysucc(x)$.
\end{proof}

\begin{lemma}\label{lem:case1}
Consider a definably complete locally o-minimal expansion of an ordered group $\mathcal M=(M,<,0,+,\ldots)$.
Assume that there exists a definable, discrete, closed and unbounded subset $D$ of $M$.  
Let $\pi:M^n \rightarrow M^d$ be a coordinate projection.
A $\pi$-quasi-special submanifold $X$ of $M^n$ which is closed in $\pi^{-1}(\pi(X))$ is a $\pi$-special submanifold.
\end{lemma}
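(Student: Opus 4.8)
The plan is to reduce the statement to verifying the covering condition (2) of Definition \ref{def:ours}, because a $\pi$-quasi-special submanifold already satisfies (1) and (3). I may permute coordinates so that $\pi$ is the projection onto the first $d$ coordinates, and I replace $D$ by $D\cup(-D)$, which is again definable and of dimension zero by Proposition \ref{prop:dim}(5), hence discrete and closed by Proposition \ref{prop:dim}(1), and is now unbounded both above and below; this removes any need to track the direction of escape. Fix $x_0\in\pi(X)$, set $Y=\myfib(X,\pi,x_0)$, and choose quasi-special data at $x_0$: an open box $U_0\ni x_0$ and mutually disjoint boxes $\{V_y\}_{y\in Y}$ with $\pi(V_y)=U_0$ and each $V_y\cap X$ the graph of a continuous map on $U_0$. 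Writing $Z=(X\cap\pi^{-1}(U_0))\setminus\bigcup_{y\in Y}V_y$, I note that $x_0\notin\pi(Z)$ (a point of $X$ over $x_0$ lies in its own box), and that if $x_0\notin\mycl(\pi(Z))$ then any small box $U\ni x_0$ disjoint from $\pi(Z)$ witnesses condition (2). So the entire problem is to rule out $x_0\in\mycl(\pi(Z))\setminus\pi(Z)$.

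Assume for contradiction that $x_0\in\mycl(\pi(Z))\setminus\pi(Z)$. By Corollary \ref{cor:curve_selection} there is a definable continuous $\sigma\colon(0,\varepsilon)\to\pi(Z)$ with $\lim_{t\to+0}\sigma(t)=x_0$, and Lemma \ref{lem:definable_choice} lifts it to a definable map $\gamma=(\sigma,q)\colon(0,\varepsilon)\to Z\subseteq X$. Shrinking $\varepsilon$ by Proposition \ref{prop:limit}(1), I may assume $\gamma$ is continuous and that every coordinate of $q$ is monotone. I claim $q$ is unbounded. If not, Proposition \ref{prop:limit}(2) provides a limit $q^\ast$, so $(x_0,q^\ast)\in\mycl(X)\cap\pi^{-1}(\pi(X))=X$ because $X$ is closed in $\pi^{-1}(\pi(X))$; thus $q^\ast\in Y$. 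Since every point of $X$ is $(X,\pi)$-normal, the box $V_{q^\ast}$ contains all of $X$ near $(x_0,q^\ast)$, so $\gamma(t)\in V_{q^\ast}$ for small $t$, contradicting $\gamma(t)\in Z$. Hence some coordinate $q_j$ tends to $+\infty$ as $t\to+0$ (the sign is irrelevant as $D=-D$), and on a smaller interval $q_j$ restricts to a definable homeomorphism $(0,\varepsilon')\to(c,+\infty)$ onto a ray.

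The crux, and the only place where hypothesis (A) enters, is to contradict the existence of this homeomorphism. Let $\phi=q_j^{-1}\colon(c,+\infty)\to(0,\varepsilon')$; then $\phi(s)\to+0$ as $s\to+\infty$. The set $D\cap(c,+\infty)$ is infinite (otherwise $D$ would be bounded above) and discrete, carrying a strictly increasing $\mysucc$-chain by Lemma \ref{lem:succ}, so its image $E=\phi(D\cap(c,+\infty))$ is an infinite, discrete, definable subset of $(0,\varepsilon')$ that accumulates at $0$. But local o-minimality applied at the point $0$ forces $E$ to meet some open interval $I\ni 0$ in a finite union of points and open intervals; as $E$ is discrete it contains no interval, so $E\cap I$ is finite, contradicting the accumulation at $0$.

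Therefore $x_0\notin\mycl(\pi(Z))$, so condition (2) holds on a suitable box $U\subseteq U_0$, and together with (1) and (3) this shows that $X$ is a $\pi$-special submanifold. I expect the main obstacle to lie in the passage of the previous paragraph: recognising that the failure of (2) must, after the monotonicity normalisation of Proposition \ref{prop:limit}(1) and the closedness argument, produce a genuine escape to infinity rather than mere oscillation, and then seeing that such an escape manufactures, through the unbounded discrete set $D$, an infinite discrete set accumulating at a point, which local o-minimality forbids.
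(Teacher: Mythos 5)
Your strategy is sound, but there is one genuine gap as written: the set $Z=(X\cap\pi^{-1}(U_0))\setminus\bigcup_{y\in Y}V_y$ is not known to be definable. You took arbitrary quasi-special data $\{V_y\}_{y\in Y}$, and Definition \ref{def:ours} explicitly does \emph{not} require the union $\bigcup_{y\in Y}V_y$ to be definable (the index set $Y$ is in general infinite, and the boxes are not chosen uniformly). Both Corollary \ref{cor:curve_selection} and Lemma \ref{lem:definable_choice}, which you apply to $\pi(Z)$ and to the lift into $Z$, are statements about definable sets, so neither can be invoked at this point. The repair is available in the paper: since every point of a quasi-special submanifold is $(X,\pi)$-normal, Lemma \ref{lem:equal_size} lets you take $V_y=U_0\times\mathcal B_{n-d}(y,\varepsilon)$ for a single sufficiently small $\varepsilon>0$; then $\bigcup_{y\in Y}V_y=U_0\times\{z\in M^{n-d}\;|\;\exists y\in Y,\ |z-y|<\varepsilon\}$ is definable because $Y$ is, and $Z$ becomes definable. (This uniformization is exactly how the paper's own proof builds its bad set, and it also streamlines your bounded-case contradiction: $q(t)\to q^\ast$ eventually puts $\gamma(t)$ in $U_0\times\mathcal B_{n-d}(q^\ast,\varepsilon)$ --- openness of the box suffices, no appeal to normality is needed.) A second, more minor slip: you cite Proposition \ref{prop:limit}(1) to make the lift continuous with monotone coordinates, but that proposition assumes the map is \emph{bounded}, and unboundedness of $q$ is precisely the case your argument must handle. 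Instead apply Theorem \ref{thm:mono} to each coordinate $q_j$, use local o-minimality at $0$ to place some interval $(0,u)$ inside one piece of the partition, and conclude monotonicity by Lemma \ref{lem:dc_mono}; this is the paper's own proof of Proposition \ref{prop:limit}(1), which never uses boundedness in part (1).

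With these two repairs your proof is correct, and it takes a genuinely different route from the paper's. The paper has no contradiction or curve-selection argument at all: after the same uniformization step, it symmetrizes $D$, uses the successor map of Lemma \ref{lem:succ} to tile the fiber space $M^{n-d}$ by the bounded closed boxes $\prod_{i}[x_i,\mysucc(x_i)]$ with $x\in D^{n-d}$, observes that for each tile meeting the bad set the projection to the base is closed, bounded and misses the base point $a$ --- hence lies at positive distance from it by Lemma \ref{lem:dist} --- and then notes that these distances form a dimension-zero, discrete and closed set, so their infimum is positive and attained; any ball of smaller radius around $a$ then witnesses condition (2) of Definition \ref{def:ours}. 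Your argument instead uses $D$ ``vertically'': a failure of condition (2) is shown, via curve selection and the closedness of $X$ in $\pi^{-1}(\pi(X))$, to force an escape to infinity along a definable curve, and inverting a monotone escaping coordinate pulls $D$ back to a definable discrete set accumulating at a finite point, which local o-minimality (or Proposition \ref{prop:dim}(1), since a dimension-zero definable set is closed) forbids. Both proofs use hypothesis (A) essentially but differently: yours isolates conceptually why failure of (2) must come from an escape to infinity, while the paper's is constructive, producing the required box directly without curve selection.
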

\begin{proof}
We may assume that $\pi$ is the projection onto the first $d$ coordinate without loss of generality.
We may further assume that $\sup(D)=\infty$ and $\inf(D)=-\infty$ by replacing $D$ with $D \cup \{-x \;|\; x \in D\}$.
Set $e=n-d$.
Let $\mysucc:D \rightarrow D$ denote the successor map given in Lemma \ref{lem:succ}.
For any $x=(x_1,\ldots, x_e) \in D^e$, we set 
$$\mathcal C(x)=\{(y_1,\ldots, y_e) \in M^e\;|\; x_i \leq y_i \leq \mysucc(x_i) \text{ for all } 1 \leq i \leq e\}.$$
The set $\mathcal C(x)$ is a bounded closed box.

We fix an arbitrary point $a$ in $\pi(X)$.
Take a bounded open box $B$ such that $B$ contains the point $a$ and the closure $\mycl(B)$ is contained in $\pi(X)$.
It is possible because $\pi(X)$ is open.
Set $E=\myfib(x,\pi,a)=\{y \in M^e\;|\; (a,y) \in X\}$.
Fix a sufficiently small positive element $\varepsilon \in M$.
By Lemma \ref{lem:normal}, we may assume that $X \cap (B \times \mathcal B_e(y,\varepsilon))$ is the graph of a continuous function defined on $B$ for each $y \in E$ by shrinking $B$ if necessary.
Set $$Y=((\mycl(B) \times M^e) \cap X) \setminus \bigcup_{y \in E} B \times \mathcal B_e(y,\varepsilon).$$
It is closed because $X$ is closed in $\pi(X)\times M^e=\pi^{-1}(\pi(X))$ and $\mycl(B) \subseteq \pi(X)$.
If $Y$ is an empty set, the pair $(B,\{B \times \mathcal B_e(y,\varepsilon)\}_{y \in E})$ satisfies the conditions (1) through (3) in Definition \ref{def:ours}.

We next consider the case in which $Y \neq \emptyset$.
Set $F=\{x \in D^e\;|\; (\mycl(B) \times \mathcal C(x)) \cap Y \neq \emptyset\}$.
It is not empty.
In addition, we have $\bigcup_{x \in F} (\mycl(B) \times \mathcal C(x)) \cap Y=Y$ by the definition of $D$.
Define the function $\rho:F \rightarrow M$ so that $\rho(x)$ is the distance of the singleton $\{a\}$ to the definable set $\pi( (\mycl(B) \times \mathcal C(x)) \cap Y)$.
The set $\pi( (\mycl(B) \times \mathcal C(x)) \cap Y)$ does not contain the point $a$ by the definition of $Y$.
It is closed and bounded by \cite[Lemma 1.7]{M} because $(\mycl(B) \times \mathcal C(x)) \cap Y$ is bounded and closed.
They imply that $\rho(x)>0$ for all $x \in F$ by Lemma \ref{lem:dist}.
The definable set $F$ is of dimension zero by Proposition \ref{prop:dim}(1),(2),(4).
The image $\rho(F)$ is of dimension zero and it is discrete and closed by Proposition \ref{prop:dim}(1),(6).
In particular, we have $\inf \rho(F) \in \rho(F)$ and $\inf \rho(F) >0$.
Take $\delta>0$ so that $\delta <\inf \rho(F)$ and the box $\mathcal B_d(a,\delta)$ is contained in $B$.
It is obvious that $\pi^{-1}(\mathcal B_d(a,\delta)) \cap X \subseteq \bigcup_{y \in E} \mathcal B_d(a,\delta) \times \mathcal B_e(y,\varepsilon)$.
It implies that the pair $(\mathcal B_d(a,\delta),\{\mathcal B_d(a,\delta)) \times \mathcal B_e(y,\varepsilon)\}_{y \in E})$ satisfies the conditions (1) through (3) in Definition \ref{def:ours}.
\end{proof}

We next treat the case (B).
\begin{lemma}\label{lem:case2}
Consider a model of DCTC $\mathcal M=(M,<,0,+,\ldots)$.
Let $\pi:M^n \rightarrow M^d$ be a coordinate projection and $X$ be a $\pi$-quasi-special submanifold $X$ of $M^n$ which is closed in $\pi^{-1}(\pi(X))$.
The definable set $\mynlb(X,\pi)$ given by $$\mynlb(X,\pi)=\{x \in \pi(X)\;|\; (X,\pi) \text{ is not locally bounded at }x\}$$ has an empty interior.
\end{lemma}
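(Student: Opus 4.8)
The plan is to argue by contradiction, replacing the local-boundedness condition by a single definable function measuring the ``height'' of the fibres, and then to apply the \emph{dimensional} form of definable Baireness (Proposition \ref{prop:baire2}) rather than its topological form.

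First I would reduce to the case in which $\pi$ is the projection onto the first $d$ coordinates by permuting coordinates, and I may assume $d \geq 1$: if $d=0$, then $\myfib(X,\pi,\cdot)=X$ is discrete because $X$ is quasi-special, hence bounded since $\mathcal M$ is a model of DCTC, so $(X,\pi)$ is locally bounded and $\mynlb(X,\pi)=\emptyset$. Now suppose $\myint(\mynlb(X,\pi)) \neq \emptyset$ and fix an open box $B \subseteq \mynlb(X,\pi)$; note $B \subseteq \pi(X)$. The first key observation is that for each $x \in B$ the fibre $\myfib(X,\pi,x)=\{y \in M^{n-d}\;|\;(x,y) \in X\}$ is discrete, because $X$ is a quasi-special submanifold and the defining boxes $V_y$ are mutually disjoint, so the fibre points are isolated. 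Each coordinate projection of this discrete definable set is again discrete by Proposition \ref{prop:dim}(1),(6), hence bounded because $\mathcal M$ is a model of DCTC; therefore the fibre itself is bounded. Consequently the definable function $\rho:B \to M$ given by $\rho(x)=\sup\{|y|\;|\;(x,y) \in X\}$ is well defined, the supremum existing in $M$ by definable completeness.

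Next I would translate non-local-boundedness into a statement about $\rho$. For a bounded open box $U$ the set $X \cap \pi^{-1}(U)=\{(x',y) \in X\;|\;x' \in U\}$ has bounded first $d$ coordinates, so it is bounded if and only if $\sup_{x' \in U}\rho(x')<\infty$. Hence $x \in \mynlb(X,\pi)$ means exactly that $\rho$ is unbounded on every neighbourhood of $x$. Since every point of $B$ lies in $\mynlb(X,\pi)$, the function $\rho$ is locally unbounded at every point of $B$. Set $B\langle r\rangle=\{x \in B\;|\;\rho(x) \leq r\}$ for $r>0$; this is a definable increasing family with $\bigcup_{r>0}B\langle r\rangle=B$. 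If some $B\langle r\rangle$ had nonempty interior it would contain an open box on which $\rho \leq r$, contradicting the local unboundedness of $\rho$ there; so $\myint(B\langle r\rangle)=\emptyset$ for every $r>0$, and since $B\langle r\rangle \subseteq M^d$ this forces $\dim B\langle r\rangle \leq d-1$ by the definition of dimension.

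Finally, applying Proposition \ref{prop:baire2} to $\{B\langle r\rangle\}_{r>0}$ yields $d=\dim B=\dim\bigcup_{r>0}B\langle r\rangle=\sup_{r>0}\dim B\langle r\rangle \leq d-1$, a contradiction. The point where care is needed --- and the reason for invoking Proposition \ref{prop:baire2} rather than Proposition \ref{prop:baire} directly --- is the passage from ``$B\langle r\rangle$ has empty interior'' to a genuine smallness statement: the closures $\mycl(B\langle r\rangle)$ may well have nonempty interior, so the topological Baire property cannot be applied to the family $\{B\langle r\rangle\}_{r>0}$, whereas Proposition \ref{prop:baire2} converts empty interior of a subset of $M^d$ directly into a drop in dimension and delivers the contradiction cleanly.
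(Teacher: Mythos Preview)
Your argument is correct but follows a genuinely different route from the paper's. The paper defines, for each coordinate $1\le i\le n-d$, the functions $l_i(x)=\inf\rho_i(\myfib(X,\pi,x))$ and $u_i(x)=\sup\rho_i(\myfib(X,\pi,x))$ on the supposed box $B\subseteq\mynlb(X,\pi)$, shrinks $B$ so that all $l_i,u_i$ become continuous (via Proposition~\ref{prop:dim}(7)), and then invokes the extreme-value principle for definable continuous functions on closed bounded sets \cite[Proposition~1.10]{M} to bound each $l_i,u_i$ on a closed sub-box $C$ containing a chosen point $a$; this exhibits local boundedness at $a$ directly, giving the contradiction. Your approach instead packages the fibre size into a single definable height function $\rho$, observes that each sublevel set $B\langle r\rangle$ has empty interior and hence dimension $<d$, and concludes via the dimensional Baire property (Proposition~\ref{prop:baire2}). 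The paper's route is more elementary in that it avoids Baire-type machinery, relying only on generic continuity of definable functions and the max--min theorem; your route is conceptually cleaner, avoids the passage to continuous representatives, and---as you rightly note---sidesteps the obstacle that the topological form of definable Baireness (Proposition~\ref{prop:baire}) is inapplicable since the closures $\mycl(B\langle r\rangle)$ may well have interior. Incidentally, neither argument actually uses the hypothesis that $X$ is closed in $\pi^{-1}(\pi(X))$.
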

\begin{proof}
It is obvious that $\mynlb(X,\pi)$ is definable.
We omit the details.
The remaining task is to show that it has an empty interior.

Assume for contradiction that $\mynlb(X,\pi)$ contains a nonempty open box $B$.
As usual, we may assume that $\pi$ is the projection onto the first $d$ coordinates.
Set $e=n-d$.
Let $\rho_i:M^e \rightarrow M$ be the projection to the $i$-th coordinate for $1 \leq i \leq e$.
Set $Y_{i,x}=\rho_i(\myfib(X,\pi,x))=\rho_i(\{y \in M^e\;|\; (x,y) \in X\})$ for $x \in \pi(X)$ and $1 \leq i \leq e$.
By the definition of quasi-special submanifolds, the set $\myfib(X,\pi,x)$ is discrete.
The sets $Y_{i,x}$ are discrete and closed by Proposition \ref{prop:dim}(1),(6).
They are bounded by the definition of a model of DCTC.
Consider the functions $u_i,l_i:B \rightarrow M$ given by $l_i(x)=\inf Y_{i,x}$ and $u_i(x)=\sup Y_{i,x}$ for $1 \leq i \leq e$.
Shrinking $B$ if necessary, we may assume that $u_i$ and $l_i$ are continuous by Proposition \ref{prop:dim}(7).

Fix a point $a \in B$.
Take a closed box $C$ such that $a \in \myint(C)$ and $C \subseteq B$.
There are $L_i$ and $U_i$ in $M$ such that $L_i<l_i(x) \leq u_i(x) <U_i$ for all $x \in C$ and $1 \leq i \leq e$ by \cite[Proposition 1.10]{M}.
By the definitions of $L_i$ and $U_i$, we have $\pi^{-1}(\myint(C)) \cap X \subseteq \myint(C) \times (\prod_{i=1}^e (L_i,U_i))$.
It means that $(X,\pi)$ is locally bounded at the point $a$.
It is a contradiction because $a \in \mynlb(X,\pi)$.
We have demonstrated that $\mynlb(X,\pi)$ has an empty interior.
\end{proof}

The following is the main part of the proof:
\begin{lemma}\label{lem:decomposition}
Consider a definably complete locally o-minimal expansion of an ordered group $\mathcal M=(M,<,0,+,\ldots)$.
Let $X$ be a definable subset of $M^n$.
There exists a family $\{C_i\}_{i=1}^N$ of mutually disjoint special submanifolds with $X= \bigcup_{i=1}^N C_i$.
Furthermore, the number $N$ of special submanifolds is not greater than the number uniquely determined only by $n$.
\end{lemma}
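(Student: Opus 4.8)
The plan is to induct on $d=\dim X$, reducing at each stage to the single problem of upgrading a quasi-special submanifold to a special one, and collecting a lower-dimensional remainder on which the induction hypothesis is applied. The base case $d=0$ is immediate: a zero-dimensional definable set is discrete and closed by Proposition \ref{prop:dim}(1), hence a special submanifold for the trivial projection $M^n \to M^0$ by the remark following Definition \ref{def:ours}. For the inductive step I first invoke the decomposition into quasi-special submanifolds \cite[Proposition 2.11]{FKK} to write $X$ as a finite disjoint union of quasi-special submanifolds, each associated with some coordinate projection $\pi:M^n \to M^{d'}$ with $d' \le d$. The whole argument then splits according to the two cases (A) and (B) isolated above, which are handled respectively by Lemma \ref{lem:case1} and by the combination of Lemma \ref{lem:case2} with Lemma \ref{lem:when}.

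The heart of the matter is to turn a single quasi-special submanifold $Y$ (over $\pi$, with $\pi(Y)$ open) into a special one at the cost of a lower-dimensional set. The two obstructions to specialness are that $Y$ need not be closed in the cylinder $\pi^{-1}(\pi(Y))$ and that $(Y,\pi)$ need not be locally bounded, and I would remove both by shrinking the base. The in-cylinder frontier $G=(\mycl(Y)\setminus Y)\cap \pi^{-1}(\pi(Y))$ satisfies $\dim G<\dim Y$ by Proposition \ref{prop:dim}(8), so $\dim \pi(G)<\dim Y$ by Proposition \ref{prop:dim}(6) and $\dim\mycl(\pi(G))=\dim\pi(G)$ by Proposition \ref{prop:dim}(5),(8); in case (B) the locus $\mynlb(Y,\pi)$ has empty interior by Lemma \ref{lem:case2}, hence dimension $<\dim Y$, and likewise for its closure. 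Restricting $Y$ to the open sub-cylinder over $\pi(Y)\setminus\bigl(\mycl(\pi(G))\cup\mycl(\mynlb(Y,\pi))\bigr)$ leaves a quasi-special submanifold that is closed in its cylinder and, in case (B), locally bounded; it is therefore special by Lemma \ref{lem:case1} in case (A) and by Lemma \ref{lem:when} in case (B). The discarded part projects into a set of dimension $<\dim Y$ and has discrete fibers, so it is of dimension $<\dim Y$ and is absorbed into the remainder. Applying this to every piece and recursing proves the existence of a finite decomposition into mutually disjoint special submanifolds.

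The remaining, and most delicate, point is the uniform bound $N=N(n)$. The count coming from \cite[Proposition 2.11]{FKK} is not bounded by a function of $n$ alone, and the recursion above could a priori multiply the number of pieces at each dimension, so a sharper packaging is needed. My intention is to produce, for each dimension $d$ and each of the $\binom{n}{d}$ coordinate projections $\pi:M^n\to M^d$, a single $\pi$-special submanifold capturing all of the $d$-dimensional $\pi$-good behaviour of $X$: the set of $(X,\pi)$-normal points is definable, Lemma \ref{lem:normal} renders the relevant locus quasi-special, and the shrinking procedure above converts it into one special submanifold. The fact I expect to be the main obstacle is the dimension drop of the remainder after all projections have been processed, namely that the points at which $X$ has local dimension $d$ yet fail to be $(X,\pi)$-normal for every such $\pi$ form a set of dimension $<d$; granting this (and assigning each good point to the lexicographically first projection that works, to keep the pieces disjoint without destroying their specialness, which is itself the subtle part), the uncaptured remainder has dimension $<d$, level $d$ contributes at most $\binom{n}{d}$ special submanifolds, and summing yields $N(n)\le\sum_{d=0}^{n}\binom{n}{d}$ up to an absolute constant. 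Everything else reduces to the already-established dimension inequalities of Proposition \ref{prop:dim} together with Lemmas \ref{lem:case1} and \ref{lem:when}.
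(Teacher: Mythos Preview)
Your first two paragraphs are essentially the paper's proof: reduce to a single quasi-special submanifold, remove from the base the closure of the projected in-cylinder frontier (and, in case (B), the closure of $\mynlb$), then apply Lemma~\ref{lem:case1} or Lemma~\ref{lem:when} to the restricted piece and recurse on the lower-dimensional remainder. One minor glitch: Lemma~\ref{lem:case2} has the hypothesis that the submanifold is closed in $\pi^{-1}(\pi(\cdot))$, so you cannot invoke it on $Y$ directly; you must first pass to $Y_1 = Y \setminus \pi^{-1}(\mycl(\pi(G)))$ and only then compute $\mynlb(Y_1,\pi)$. The paper does exactly this two-step shrinking (first $X_1$, then $X_2$) rather than removing both bad loci at once.

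The genuine gap is in your third paragraph, and it stems from a citation error rather than a mathematical obstacle. You invoke \cite[Proposition 2.11]{FKK} for the quasi-special decomposition and then worry that its count is not bounded by a function of $n$; the paper instead cites \cite[Lemma 4.3]{Fuji4}, which already provides a decomposition into at most $f(n)$ quasi-special submanifolds with $f$ depending only on $n$. With that in hand the uniform bound is immediate from your own recursion: writing $N(n,d)$ for the bound on a definable set of dimension $\le d$ in $M^n$, each quasi-special piece contributes one special submanifold plus a remainder of strictly lower dimension, so $N(n,d) \le f(n)\,(1 + N(n,d-1))$ and $N(n,0)=1$, giving $N(n,n)$ bounded by a function of $n$. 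Your proposed alternative---producing one special piece per projection by taking normal loci and assigning points lexicographically---is an attempt to reprove \cite[Lemma~4.3]{Fuji4} from scratch; you correctly flag both the dimension-drop claim and the disjointness-preserving assignment as the hard steps, and neither is carried out, so as written that route is incomplete. Simply swap the reference and the bound follows from the argument you already have.
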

\begin{proof}
By \cite[Lemma 4.3]{Fuji4}, the definable set $X$ is decomposed into finitely many mutually disjoint quasi-special submanifolds and the number of the quasi-special submanifolds is not greater than the number uniquely determined only by $n$.
Therefore, we may assume that $X$ is a $\pi$-quasi-special submanifold of dimension $d$, where $\pi:M^n \rightarrow M^d$ is a coordinate projection.
 
We prove the lemma by induction on $d$.
The definable set $X$ is obviously a special submanifold when $d=0$.
We consider the case in which $d>0$.
The frontier $\partial X$ of $X$ is of dimension $< d$ by Proposition \ref{prop:dim}(8).
When $X$ is closed, set $Z_1=\emptyset$ and $X_1=X$.
It is obvious that $X_1$ is closed in $\pi^{-1}(\pi(X_1))$ in this case.
We next consider the case $X$ is not closed.
The closure of the image $Y_1:=\mycl(\pi(\partial X))$ is of dimension $<d$ by Proposition \ref{prop:dim}(5),(6),(8).
The intersection $\pi^{-1}(x) \cap X$ is discrete by the definition of quasi-special submanifolds for each $x \in Y_1$.
It is of dimension zero by Proposition \ref{prop:dim}(1).
Set $Z_1=\pi^{-1}(Y_1) \cap X$ and $X_1=X \setminus Z_1$.
We have $\dim Z_1= \dim Y_1 + \dim (\pi^{-1}(x) \cap X) = \dim Y_1 <d$ for any $x \in Y_1$ by Proposition \ref{prop:dim}(9).
Since each point in $X_1$ is $(X_1,\pi)$-normal, $X_1$ is a quasi-special submanifold by Lemma \ref{lem:normal}.
It is obvious that $X_1$ is closed in $\pi^{-1}(\pi(X_1))$.

We treat two separate cases.
We first consider the case in which there exists an unbounded definable discrete subset of $M$.
The quasi-special submanifold $X_1$ is a special submanifold by Lemma \ref{lem:case1}.
By the induction hypothesis, $Z_1$ is decomposed into mutually disjoint special submanifolds $C_1, \ldots, C_N$.
The decomposition $X=X_1 \cup \bigcup_{i=1}^N C_i$ is the desired decomposition.

The latter case is the case in which all definable discrete subsets of $M$ are bounded.
The structure $\mathcal M$ is a model of DCTC by Definition \ref{def:dctc}.
The definable set $\mynlb(X_1,\pi)$ is of dimension $<d$ by Lemma \ref{lem:case2}.
Set $Y_2=\mycl(\mynlb(X_1,\pi))$.
We have $\dim Y_2<d$ by Proposition \ref{prop:dim}(5),(8).
Set $Z_2=\pi^{-1}(Y_2) \cap X_1$ and $X_2=X_1 \setminus Z_2$.
The pair $(X_2,\pi)$ is obviously locally bounded at every point in $\pi(X_2)$.
Apply the same argument for $X_1$ and $Z_1$ to $X_2$ and $Z_2$.
The definable set $X_2$ is a quasi-special submanifold which is closed in $\pi^{-1}(\pi(X_2))$ and $\dim Z_2<d$.
We obtain $\dim Z_1 \cup Z_2 <d$ by Proposition \ref{prop:dim}(5).
The definable set $X_2$ is a special submanifold by Lemma \ref{lem:when}.
By the induction hypothesis, $Z_1 \cup Z_2$ is decomposed into mutually disjoint special submanifolds $C_1, \ldots, C_N$.
The decomposition $X=X_2 \cup \bigcup_{i=1}^N C_i$ is the desired decomposition.
\end{proof}

\begin{definition}\label{def:decomposition}
Consider an expansion of a densely linearly order without endpoints $\mathcal M=(M,<,\ldots)$.
Let $\{X_i\}_{i=1}^m$ be a finite family of definable subsets of $M^n$.
A \textit{decomposition of $M^n$ into special submanifolds partitioning $\{X_i\}_{i=1}^m$} is a finite family of special submanifolds $\{C_i\}_{i=1}^N$ such that 
\begin{itemize}
\item $\bigcup_{i=1}^NC_i =M^n$, 
\item $C_i \cap C_j=\emptyset$ when $i \not=j$ and 
\item either $C_i$ has an empty intersection with $X_j$ or it is contained in $X_j$
\end{itemize}
for any $1 \leq i \leq m$ and $1 \leq j \leq N$.
A decomposition $\{C_i\}_{i=1}^N$ of $M^n$ into special submanifolds \textit{satisfies the frontier condition} if the closure of any special manifold $\mycl(C_i)$ is the union of a subfamily of the decomposition.
\end{definition}

\begin{theorem}\label{thm:frontier_condition}
Consider a definably complete locally o-minimal expansion of an ordered group $\mathcal M=(M,<,0,+,\ldots)$.
Let $\{X_i\}_{i=1}^m$ be a finite family of definable subsets of $M^n$.
There exists a decomposition $\{C_i\}_{i=1}^N$ of $M^n$ into special submanifolds partitioning $\{X_i\}_{i=1}^m$ and satisfying the frontier condition.
Furthermore, the number $N$ of special submanifolds is not greater than the number uniquely determined only by $m$ and $n$.
\end{theorem}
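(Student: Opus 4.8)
The plan is to separate the two substantive demands of the statement — partitioning the family $\{X_i\}_{i=1}^m$ and the frontier condition — and to observe that only the second is difficult. First I would dispose of the partitioning and the cardinality bound together. Form the at most $2^m$ nonempty Boolean atoms $A_S=\bigcap_{i\in S}X_i\cap\bigcap_{i\notin S}(M^n\setminus X_i)$ for $S\subseteq\{1,\dots,m\}$; these are definable and partition $M^n$, and any decomposition compatible with $\{A_S\}_S$ is automatically compatible with each $X_i$. Applying Lemma \ref{lem:decomposition} to each atom and taking the union produces a decomposition of $M^n$ into special submanifolds partitioning $\{X_i\}_{i=1}^m$ whose cardinality is at most $2^m$ times the bound furnished by Lemma \ref{lem:decomposition}, hence a number depending only on $m$ and $n$. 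So the partitioning requirement and the uniform bound are, by themselves, routine; the entire content is the frontier condition.

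Next I would recast the frontier condition as a self-compatibility statement. A decomposition $\mathcal D$ satisfies the frontier condition exactly when it is compatible with the finite family $\{\mycl(C)\;|\;C\in\mathcal D\}$: indeed, if $\mathcal D$ partitions each $\mycl(C)$ then, since $\bigcup\mathcal D=M^n$, every $\mycl(C)$ is the union of the members of $\mathcal D$ that it contains, which is precisely the frontier condition. Thus I would look for a decomposition $\mathcal D$ (into special submanifolds, partitioning $\{X_i\}$) that partitions $\{X_i\}\cup\{\mycl(C)\;|\;C\in\mathcal D\}$. It is natural to define a refinement operator $\Phi$ sending $\mathcal D$ to a decomposition, produced as in the previous paragraph, partitioning the family $\{X_i\}\cup\mathcal D\cup\{\mycl(C)\;|\;C\in\mathcal D\}$; then $\Phi(\mathcal D)$ refines $\mathcal D$, and any fixed point of $\Phi$ meets the requirement.

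The hard part will be termination, since blind iteration of $\Phi$ need not stabilize. I would instead organize the construction by downward induction on the dimension of the cells, exploiting that $\dim\partial C<\dim C$ by Proposition \ref{prop:dim}(8). If $D$ is the top dimension occurring, then $Z=\bigcup_C\partial C$ has $\dim Z\le D-1$. Refining to be compatible with $Z$ leaves each $D$-dimensional cell as a piece that is open in $\pi^{-1}(\pi(\cdot))$, and the frontier of such a piece lies in $\partial C\cup Z\subseteq\mycl(Z)$, a set of dimension $\le D-1$; hence no frontier material of dimension $\ge D$ is ever created, and after re-decomposing the $D$-dimensional remainders via Lemma \ref{lem:decomposition} (using Lemma \ref{lem:when} to confirm the pieces are genuinely special submanifolds) the frontiers of all top cells are confined to a fixed lower-dimensional set. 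The frontier condition for the top cells then reduces to producing, in dimension $\le D-1$, a decomposition compatible with $Z$ and with the finitely many frontiers of the $D$-dimensional cells — exactly the inductive hypothesis applied to a configuration of strictly smaller dimension. Assembling the inductively constructed lower-dimensional decomposition with the frozen top cells gives the desired $\mathcal D$.

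I expect the genuine obstacle to lie in the bookkeeping of this last paragraph: verifying carefully that re-decomposing the top-dimensional remainders introduces no frontier component outside the already-controlled set $\mycl(Z)$ (so that the induction truly drops dimension), and checking that the closures of the frozen top cells are unions of the inductively produced lower cells. Equally, I would need to track the cardinality through each inductive stage — the depth of the recursion is at most $n+1$ and each stage multiplies the count by the uniform factor from Lemma \ref{lem:decomposition} — so that the final total is bounded by a function of $m$ and $n$ alone, as the theorem demands.
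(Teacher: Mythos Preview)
The paper's own proof is a one-line citation: it declares the argument identical to \cite[Theorems~4.4 and 4.5]{Fuji4} with Lemma~\ref{lem:decomposition} substituted for \cite[Lemma~4.3]{Fuji4}, and gives no details. So there is nothing in the paper to compare against beyond the fact that both you and the paper take Lemma~\ref{lem:decomposition} as the engine; your Boolean-atom reduction for the partitioning clause and your downward induction on dimension for the frontier condition are the standard moves and are presumably what \cite{Fuji4} does.

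One correction on the obstacle you correctly flag. If you re-decompose a top remainder $C\setminus Z$ by a blind application of Lemma~\ref{lem:decomposition}, the resulting $D$-dimensional pieces can have frontiers meeting \emph{each other}, not only $\mycl(Z)$, and Lemma~\ref{lem:when} says nothing about that. The usual fix is to avoid re-decomposition entirely: since $C$ is already a $\pi$-special submanifold with $\pi:M^n\to M^D$, set $W=\mycl\bigl(\pi(C\cap\mycl(Z))\bigr)$, a closed subset of the base of dimension $<D$, and take $C'=C\setminus\pi^{-1}(W)$. Then $C'$ is a single $\pi$-special submanifold over the open base $\pi(C)\setminus W$, one has $C'\cap\mycl(Z)=\emptyset$, and $\dim(C\setminus C')<D$ by Proposition~\ref{prop:dim}(9). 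Since distinct old top cells are disjoint and each new $C'$ avoids $\mycl(Z)\supseteq\bigcup\partial C$, no $\partial C'$ meets any other new top cell, so all top frontiers fall into the lower-dimensional complement and the induction descends. At the outermost step $D=n$, so $\bigcup C'$ is open and its complement is closed; in the recursive steps you must work inside that closed complement so that closures computed in $M^n$ stay within the set being decomposed. With this adjustment the bookkeeping and the cardinality bound go through as you describe.
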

\begin{proof}
The proof is literally same as the proof of \cite[Theorem 4.4, Theorem 4.5]{Fuji4} except that we use Lemma \ref{lem:decomposition} instead of \cite[Lemma 4.3]{Fuji4}.
We omit the details.
\end{proof}

\begin{remark}
Lemma \ref{lem:equal_size} implies that there exists a family $\{U_y\}_{y \in \myfib(X,\pi,x)}$ of open boxes $U_y$ parameterized by the definable set $\myfib(X,\pi,x)$ such that 
\begin{enumerate}
\item[(a)]  the union $\bigcup_{y \in \myfib(X,\pi,x)} \{y \} \times U_y$ is definable and \item[(b)] $X \cap U_y$ is the graph of a definable map for each $y \in \myfib(X,\pi,x)$.
\end{enumerate}
This fact is essentially used in our proof of the decomposition into special submanifolds.

Consider a definably complete locally o-minimal structure $\mathcal M=(M,<,\ldots)$ which is not necessarily an expansion of an ordered group.
A definable, discrete closed subset $D$ in $M$ is always a special submanifold.
In this case, a family satisfying the conditions (a) and (b) is a family $\{I_x\}_{x \in D}$ of mutually disjoint open intervals $I_x$ containing the point $x$ such that $\bigcup_{x \in D} \{x\} \times I_x$ is definable.
The author does not know whether such a family $\{I_x\}_{x \in D}$ exists when a definable choice lemma such as Lemma \ref{lem:definable_choice} is unavailable.
He does not know whether Theorem \ref{thm:frontier_condition} still holds true when we drop the assumption that the structure is an expansion of an ordered group, neither.
\end{remark}

\subsection{Decomposition into special submanifolds with tubular neighborhoods}
A tubular neighborhood of a submanifold in a Euclidean space is a convenient tool for geometric studies of semialgebraic sets \cite{BCR} and others \cite{Shiota}.
We define a special submanifolds with a tubular neighborhood and give a decomposition theorem into special submanifolds with tubular neighborhoods for future use.

We define a special submanifold with a tubular neighborhood.
\begin{definition}\label{def:tub1}
Let $\mathcal M=(M,<,+,0,\ldots)$ be an expansion of an ordered abelian group.
Let $X$ be a $\pi$-special submanifold in $M^n$, where $\pi:M^n \rightarrow M^d$ is a coordinate projection.
Let $\tau$ be the unique permutation of $\{1,\ldots, n\}$ satisfying the conditions (a) and (b) in Definition \ref{def:ours}.
Set $U=\pi(X)$.

When $\dim X<n$, the tuple $(X,\pi,T,\eta,\rho)$ is a \textit{special submanifold with a tubular neighborhood} if 
\begin{enumerate}
\item[(a)] $T$ is a definable open subset of $\pi^{-1}(U)$;
\item[(b)] $\eta:U \rightarrow F$ is a positive bounded definable continuous function such that, for all $u \in U$, we have 
$$\myfib(T,\pi,u)= \bigcup_{x \in \myfib(X,\pi,u)}  \mathcal B_{n-d}(x,\eta(u))$$
 and $$\mathcal B_{n-d}(x_1,\eta(u)) \cap \mathcal B_{n-d}(x_2,\eta(u)) = \emptyset$$ for all $x_1,x_2 \in \myfib(X,\pi,u)$ with $x_1 \neq x_2$;
\item[(c)] $\rho:T \rightarrow X$ is a definable continuous retraction such that, for any $u \in U$, we have $\rho(\pi^{-1}(u) \cap T) \subseteq \pi^{-1}(u) \cap X$ and $\rho(\overline{\tau}(u,y)))=\overline{\tau}(u,x)$ for all $x \in \myfib(X,\pi,u)$ and $y \in \mathcal B_{n-d}(x,\eta(u))$.
\end{enumerate}
When $\dim X=n$, the tuple $(X,\pi,T,\eta,\rho)$ is a \textit{special submanifold
 with a tubular neighborhood} if $X$ is open, $T=X$, $\eta \equiv 0$, and $\rho$ is the identity map on $X$.
 
A \textit{decomposition of $M^n$ into special submanifolds with tubular neighborhoods} is a finite family of special submanifolds with tubular neighborhoods $\{(X_i,\pi_i,T_i,\eta_i,\rho_i)\}_{i=1}^N$ such that $\{(X_i,\pi_i)\}_{i=1}^N$ is a decomposition of $M^n$ into special submanifolds.
We say that a decomposition $\{(X_i,\pi_i,T_i,\eta_i,\rho_i)\}_{i=1}^N$ of $M^n$ into  special submanifolds with tubular neighborhoods partitions a given finite family of definable sets and satisfies the frontier condition if so does the decomposition into special submanifolds $\{(X_i,\pi_i)\}_{i=1}^N$.
\end{definition}

This definition seems to be technical, but a decomposition into special submanifolds with tubular neighborhoods is useful.
The following theorem guarantees the existence of the decomposition.

\begin{theorem}\label{thm:tubular_decom}
Let $\mathcal M=(M,<,+,0,\ldots)$ be a definably complete locally o-minimal expansion of an ordered group.
Let $\{X_i\}_{i=1}^m$ be a finite family of definable subsets of $M^n$.
There exists a decomposition of $M^n$ into special submanifolds with tubular neighborhoods partitioning $\{X_i\}_{i=1}^m$ and satisfying the frontier condition.
In addition, the number of special submanifolds with tubular neighborhoods is bounded by a function of $m$ and $n$.
\end{theorem}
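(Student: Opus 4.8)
The plan is to build on the decomposition into special submanifolds already furnished by Theorem \ref{thm:frontier_condition} and to attach tubular data $(T,\eta,\rho)$ to each piece, refining the decomposition where the data cannot be constructed globally. Applying Theorem \ref{thm:frontier_condition} to $\{X_i\}_{i=1}^m$ gives a decomposition of $M^n$ into boundedly many special submanifolds that partitions the family and satisfies the frontier condition, so it suffices to equip a single $\pi$-special submanifold $X$ with tubular data and, when forced to subdivide $X$, to reassemble the results. Permuting coordinates, I may assume $\pi:M^n\to M^d$ is the projection onto the first $d$ coordinates, so $\overline{\tau}$ is the identity, and, as in Lemma \ref{lem:decomposition}, I may assume $X$ is closed in $\pi^{-1}(\pi(X))$. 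The case $\dim X=n$ is immediate: then $\pi$ is the identity, $X=\pi(X)$ is open, and $(X,\pi,X,0,\mathrm{id}_X)$ is a special submanifold with a tubular neighborhood in the sense of Definition \ref{def:tub1}. So assume $d=\dim X<n$, write $U=\pi(X)$ (open in $M^d$) and $e=n-d$, and proceed by induction on $d$.

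The core step is the construction of the radius function $\eta$. For $u\in U$ the fiber $\myfib(X,\pi,u)$ is discrete and closed, and for each $x$ in it the distance $s(u,x)$ from $x$ to $\myfib(X,\pi,u)\setminus\{x\}$ is positive since $x$ is isolated. The image of $\myfib(X,\pi,u)$ under $s(u,\cdot)$ is the image of a definable map on a zero-dimensional set, hence zero-dimensional and thus discrete and closed by Proposition \ref{prop:dim}(1),(6); a discrete closed subset of $M_{>0}$ has positive infimum, for otherwise $0$ would be a limit point lying in the set. Combining this with the uniform fiber radius supplied by Lemma \ref{lem:equal_size}, I fix a positive constant $c$ and let $\eta_0(u)$ be half of the supremum of those $0<\varepsilon<c$ for which the balls $\mathcal B_e(x,\varepsilon)$ over the fiber are pairwise disjoint and each $X\cap(V\times\mathcal B_e(x,\varepsilon))$ is a single continuous graph over some box $V\ni u$. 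The argument above shows $\eta_0(u)>0$ for every $u$, and the cap by $c$ makes $\eta_0$ bounded.

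The obstacle is that $\eta_0$ need not be continuous on all of $U$, and without continuity of $\eta$ the tube $T$ will not be open nor the retraction continuous. I would handle this exactly as in Lemma \ref{lem:decomposition}: by Proposition \ref{prop:dim}(7) the set $S\subseteq U$ of discontinuity points of $\eta_0$ satisfies $\dim S<d$, whence $\dim\mycl(S)<d$ by Proposition \ref{prop:dim}(5),(8). On the open set $U':=U\setminus\mycl(S)$ the function $\eta:=\eta_0$ is positive, bounded, definable and continuous, so $X':=X\cap\pi^{-1}(U')$ is again a $\pi$-special submanifold closed in $\pi^{-1}(U')$, the tube $T:=\bigcup_{u\in U'}\{u\}\times\bigcup_{x\in\myfib(X,\pi,u)}\mathcal B_e(x,\eta(u))$ is open in $\pi^{-1}(U')$, and the snap-to-centre map $\rho(u,y)=(u,x)$ for $y\in\mathcal B_e(x,\eta(u))$ is, by the local graph representation of a special submanifold, a well-defined definable continuous retraction onto $X'$ meeting condition (c) of Definition \ref{def:tub1}; disjointness of the balls gives condition (b) and Lemma \ref{lem:dist} controls the separation. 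The remainder $X\cap\pi^{-1}(\mycl(S))$ has discrete fibers, hence dimension $\dim(\mycl(S))<d$ by Proposition \ref{prop:dim}(9), and is decomposed into special submanifolds with tubular neighborhoods by the inductive hypothesis (the base case $d=0$ being the single-ball construction with constant $\eta$).

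Gluing the full-dimensional piece $(X',\pi,T,\eta,\rho)$ to the inductively obtained pieces yields a tubular decomposition of $X$, and summing over the pieces of the original decomposition gives a tubular decomposition of $M^n$ partitioning $\{X_i\}_{i=1}^m$. The frontier condition and the mutual-disjointness of the final family are arranged exactly as in the proof of Theorem \ref{thm:frontier_condition} (that is, of \cite[Theorems 4.4, 4.5]{Fuji4}), since that argument only manipulates the underlying special submanifolds and is compatible with the fiberwise data: restriction to the preimage of an open sub-base preserves the tuple $(T,\eta,\rho)$, while the lower-dimensional loci cut off in the process receive their own tubular data by the same induction. Because each inductive step produces a number of pieces bounded in terms of $d$ and $n$ and the induction has depth at most $n$, the total number is bounded by a function of $m$ and $n$. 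I expect the continuity-and-refinement bookkeeping in the third and fourth paragraphs to be the main obstacle: positivity of $\eta$ and disjointness of the balls are clean consequences of the zero-dimensional image argument together with Lemmas \ref{lem:equal_size} and \ref{lem:dist}, whereas verifying continuity of $\rho$ and checking that peeling off $\mycl(S)$ preserves the frontier condition are the delicate points.
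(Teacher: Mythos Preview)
Your approach is essentially the paper's: build a positive bounded definable radius function on $U=\pi(X)$ using the fibrewise separation, remove the closure of its discontinuity locus, equip the surviving open part with the obvious tube and snap-to-centre retraction, and recurse on the lower-dimensional remainder. Two differences are worth noting. First, the paper's radius is simply $\eta'(u)=\tfrac{1}{3}\inf\bigl(\{c\}\cup\{\,|y-x|:x\in\myfib(C,\pi,u),\ y\in\myfib(C,\pi,u)\setminus\{x\}\,\}\bigr)$; the extra ``graph over some $V$'' clause you fold into $\eta_0$ is harmless but unnecessary, since Definition~\ref{def:tub1}(b) only asks for disjoint balls and the continuity of $\rho$ is checked afterwards from the local graph representation of the special submanifold.

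Second, and more substantively, your treatment of the frontier condition is the one genuine soft spot. Appealing to ``the proof of Theorem~\ref{thm:frontier_condition}'' does not suffice, because after you peel off $\mycl(S)$ the frontier $\partial X'$ of the new top-dimensional piece $X'$ need not be a union of members of the original decomposition. The paper fixes this explicitly inside the induction (its Claim~2): at the recursive call on the lower-dimensional set $X'=\bigcup_i D_i$ one partitions not only the family $\{D_i\}$ but also the family $\{D_i\cap\partial C'_j\}$, where the $C'_j$ are the top-dimensional pieces just produced. This single addition is exactly what guarantees the frontier condition for the final family, and it is the missing ingredient in your ``arrange as in \cite[Theorems~4.4,~4.5]{Fuji4}'' step.
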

\begin{proof}
We first demonstrate the following claim:
\medskip

\textbf{Claim 1.} Let $\pi:M^n \rightarrow M^d$ be a coordinate projection and $C \subseteq M^n$ be a $\pi$-special submanifold.
There exists a special submanifolds with tubular neighborhoods $(X,\pi,T,\eta,\rho)$ such that $X \subseteq C$ and $\dim C \setminus X<d$.
\medskip

The claim is obvious when $d=n$.
We have only to set $X=T=C$, $\eta=0$ and $\rho=\operatorname{id}$.
We next consider the case in which $d<n$.
For simplicity of notations, we assume that $\pi$ is the coordinate projection onto the first $d$ coordinates.
Note that $\myfib(C,\pi,u)$ are discrete and closed for all $u \in U=\pi(C)$ by the definition of special submanifolds and Proposition \ref{prop:dim}(1).
We also note that $\myfib(C,\pi,u) \setminus \{x\}$ is also closed and discrete for any $x \in \myfib(C,\pi,u)$.
Take a positive element $c \in M$.
Consider the map $\eta':U \rightarrow F$ defined by
$$\eta'(u)=\dfrac{1}{3}\inf\left( \{c\} \cup \bigcup_{ x \in \myfib(C,\pi,u)} \{|y-x| \;|\; y \in \myfib(C,\pi,u) \setminus \{x\}\}\right).$$
The map $\eta'$ is obviously definable.
For a fixed $u \in U$, the set in the round brackets is a discrete and closed set contained in the open interval $(0,\infty)$ by Proposition \ref{prop:dim}(1),(6).
In particular, we have $\eta'(u)>0$ for all $u \in U$.

Let $D$ be the closure of the set of points at which $\eta'$ is discontinuous.
We have $\dim D<d$ by Proposition \ref{prop:dim}(5),(7),(8).
Set $V=U \setminus D$, which is a definable open subset of $M^d$.
We define $X$, $T$, $\eta$ and $\rho$ as follows:
\begin{itemize}
\item $X=C \cap \pi^{-1}(V)$;
\item $T=\bigcup_{u \in V}\bigcup_{x \in \myfib(C,\pi,u)} \mathcal \{u\} \times \mathcal B_{n-d}(x,\eta'(u))$;
\item $\eta$ is the restriction of $\eta'$ to $V$;
\item $\rho:T \rightarrow X$ is the map such that $\rho(x)$ is the unique $y \in X$ with $u=\pi(x)=\pi(y)$ and $\Pi(x) \in B_{n-d}(\Pi(y),\eta'(u))$, where $\Pi$ is the projection of $M^n$ forgetting the first $d$-coordinates.
\end{itemize}
We show that $(X,\pi,T,\eta,\rho)$ is a special submanifold with a tubular neighborhood.
They obviously satisfy the conditions (a) and (b) in Definition \ref{def:tub1}.
It is also obvious that $\rho$ is definable and satisfies the inclusion and the equality given in Definition \ref{def:tub1}(c).
The remaining task is to demonstrate that $\rho$ is continuous.

Take a point $x \in T$.
Set $u=\pi(x)$.
There exists a unique point $y \in X$ such that $\pi(y)=u$ and $\Pi(x) \in \mathcal B_{n-d}(\Pi(y),\eta'(u))$.
There exist an open box $W$ containing the point $y$ and a definable continuous map $\zeta$ defined on $\pi(W)$ such that $W \cap C$ is the graph of $\zeta$.
We may assume that $\pi(W)$ is contained in $V$, shrinking $W$ if necessary.
Set $W'=\bigcup_{u \in \pi(W)}\mathcal \{u\} \times \mathcal B_{n-d}(\zeta(u),\eta'(u))$.
The set $W'$ is an open subset of $T$ containing the point $x$.
For $(u',t) \in W'$, we get $\rho(u',t)=(u',\zeta(u'))$.
This equality implies that $\rho$ is continuous on $W'$.
Therefore, the map $\rho$ is continuous on its domain of definition.

The set $C \setminus X$ is given by $C \cap \pi^{-1}(D)$.
Since $\dim D<d$ and $\dim \pi^{-1}(u) \cap C =0$ for all $u \in D$, we get $\dim C \setminus X <d$ by Proposition \ref{prop:dim}(9).
We have now demonstrated the claim.
\medskip

For any definable subset $X$ of $M^n$, we define a decomposition of $X$ into special submanifolds in the same manner as the case in which $X=M^n$.
We next demonstrate the following claim.
The theorem is a direct corollary of this claim.
\medskip

\textbf{Claim 2.}
Let $X$ be a definable subset of $M^n$ and $\{X_i\}_{i=1}^m$ be a finite family of definable subsets of $X$.
There exists a decomposition of $X$ into special submanifolds with tubular neighborhoods partitioning $\{X_i\}_{i=1}^m$ and satisfying the frontier condition.
In addition, the number of special submanifolds with tubular neighborhoods is bounded by a function of $m$ and $n$.
\medskip

We prove the claim by induction on $d=\dim X$.
We first consider the case in which $d=0$.
Apply Theorem \ref{thm:frontier_condition} and find a decomposition of $M^n$ into special submanifolds partitioning $\{X\} \cup \{X_i\}_{i=1}^m$.
A subfamily of this partition gives a decomposition of $X$ into special submanifolds $\{C_i\}_{i=1}^N$ partitioning $\{X_i\}_{i=1}^m$.
Since $\dim C_i=0$, the special submanifolds $C_i$ are closed for all $1 \leq i \leq N$ by Proposition \ref{prop:dim}(1).
Apply Claim 1 to the special submanifold $C_i$ for each $1 \leq i \leq N$, and we get a special submanifold with tubular neighborhoods $(C_i,\pi_i,T_i,\eta_i,\rho_i)$.
The family $\{(C_i,\pi_i,T_i,\eta_i,\rho_i)\}_{i=1}^N$ is a desired decomposition.

We next consider the case in which $d>0$.
Apply Theorem \ref{thm:frontier_condition}.
We get a decomposition of $X$ into special submanifolds $\{C_i\}_{i=1}^N$ partitioning $\{X_i\}_{i=1}^m$.
We may assume that $\dim C_i=d$ for all $1 \leq i \leq L$ and $\dim C_i<d$ for all $i>L$ without loss of generality.
Apply Claim 1 to the special submanifold $C_i$ for each $1 \leq i \leq L$, we get a definable subset $C'_i$ of $C_i$ with $\dim C_i \setminus C'_i<d$ and a special submanifold with a tubular neighborhood $(C'_i,\pi_i,T_i,\eta_i,\rho_i)$.
Set $D_i=C_i \setminus C'_i$ for $1 \leq i \leq L$ and $D_i = C_i$ for $L<i \leq N$.
We put $X'=\bigcup_{i=1}^N D_i$.
We obtain $\dim X'<d$ by Proposition \ref{prop:dim}(5).
Apply the induction hypothesis to $X'$, then there exists a decomposition $\{(E_i,\pi'_i,T'_i,\eta'_i,\rho'_i)\}_{i=1}^{N'}$ of $X'$ into special submanifolds with tubular neighborhoods partitioning $\{D_i\}_{i=1}^N \cup \{D_i \cap \partial C'_j\}_{1 \leq i \leq N,\ 1 \leq j \leq L}$ and satisfying the frontier condition.
The family $\{(C'_i,\pi_i,T_i,\eta_i,\rho_i)\}_{i=1}^L \cup \{(E_i,\pi'_i,T'_i,\eta'_i,\rho'_i)\}_{i=1}^{N'}$ is a desired decomposition.

The `in addition' part is clear from the proof.
\end{proof}

\begin{remark}
Let $r$ be a positive integer.
When the structure is a definably complete locally o-minimal expansion of an ordered field, Theorem \ref{thm:tubular_decom} can be easily extended to a decomposition into special $\mathcal C^r$-submanifolds with tubular neighborhoods by minor modifications of the definition of the neighborhood $\mathcal B_m(x,r)$ and proofs.

Several assertions demonstrated in the o-minimal setting using the definable cell decomposition theorem and the stratification theorem also hold true in definably complete locally o-minimal expansions of ordered fields.
We use decomposition into special $\mathcal C^r$-submanifolds with tubular neighborhoods instead of the definable cell decomposition theorem and the stratification theorem.
They will be proved in the forthcoming paper.
\end{remark}

\end{document}